\definecolor{mygreen}{RGB}{28,172,0} 
\definecolor{mylilas}{RGB}{170,55,241}
\DeclareMathOperator*{\esssup}{ess\,sup}
\newcommand{\bx}{{\boldsymbol{x}}}
\newcommand{\cD}{{\mathcal{D}}}
\newcommand{\cK}{{\mathcal{K}}}
\newcommand{\cM}{{\mathcal{M}}}
\newcommand{\E}{{\mathbb{E}}}
\renewcommand{\P}{{\mathbb{P}}}
\newcommand{\R}{{\mathbb{R}}}
\newcommand{\N}{{\mathcal{N}}}
\renewcommand{\d}{\textup{d}}
\newcommand{\uL}{{L^2(\Gamma;L^2(\partial \cD))}}
\newcommand{\uH}{{L^2(\Gamma;H^{1/2}(\partial \cD))}}
\newcommand{\yL}{{L^2(\Gamma;L^2(\cD))}}
\newcommand{\yH}{{L^2(\Gamma;H^1(\cD))}}
\newtheorem{theorem}{Theorem}[section] 
\newtheorem{lemma}[theorem]{Lemma}
\newtheorem{assumption}[theorem]{Assumption}
\newtheorem{corollary}[theorem]{Corollary}
\newtheorem{remark}[theorem]{Remark}
\numberwithin{theorem}{section}
\author{Max Winkler\footnote{Faculty of Mathematics, Chemnitz University of Technology, Germany, \url{max.winkler@mathematik.tu-chemnitz.de}} \and Hamdullah Yücel \footnote{Institute of Applied Mathematics, Middle East Technical University, Ankara, Türkiye, \url{yucelh@metu.edu.tr}}}
\title{A stochastic Galerkin method for optimal Dirichlet boundary control problems with uncertain data}
\begin{document}

\maketitle

\begin{abstract}
The paper deals with a stochastic Galerkin approximation of elliptic Dirichlet boundary control problems with random input data. The expectation of a tracking cost functional with the deterministic constrained control is minimized. Error estimates are derived for the control variable in $L^2(\partial \cD)$-norm and state variable in $\yL$-norm. To solve large linear systems, appropriate preconditioners are proposed for both unconstrained and constrained scenarios. To illustrate the validity and efficiency of the proposed approaches, some numerical experiments are performed.
\end{abstract}

\section{Introduction}\label{sec:intro}

While solving physical and engineering systems, ones  observes discrepancies between the results of simulations and real systems, caused by different kinds of errors which are model errors, numerical errors, and data errors. Some of the above errors may be reduced, e.g., by building more accurate models, by improving numerical resolution methods, and by additional measurements, etc.  Such kinds of  errors or uncertainties that can be reduced are usually called epistemic or systematic uncertainties. There are also other sources of randomness that are intrinsic to the system itself (e.g., uncertainty in the quantum systems) and hence cannot be reduced. They are the so-called aleatoric or statistical uncertainties. Due to the aforementioned reasons, the uncertainty should be taken into account in the mathematical formulation of real systems if one aims at obtaining more reliable numerical predictions from mathematical models. For instance, an oil company may choose to inject water or other solvents into a reservoir to increase pressure and produce more oil but of course the subsurface rock properties are unknown. If the optimization problem is to determine the well locations and injection rates that maximize the net present value of the reservoir, the optimal rates  should be resilient to the inherent uncertainties of the subsurface. While constructing the mathematical formulation, uncertainties can be characterized by parameters associated with, for example, coefficients, boundary data, initial conditions, or source terms that are not known, and  consequently the solution of underlying formulation becomes a random variable or more general a random field. 

To characterize and control the variability of the solution, optimal control problems containing uncertainty governed by partial differential equations (PDEs) have been studied in various formulations in the last decade, such as mean--based control \cite{ABorzi_2010a,ABorzi_VSchulz_CSchillings_GvonWinckel_2010a}, pathwise control \cite{FNegri_AManzoni_GRozza_2015,AAAli_EUllman_MHinze_2017}, average control \cite{MLazar_EZuazua_2014,EZuazua_2014}, robust deterministic control \cite{SGarreis_MUlbrich_2017,MDGunzburger_HCLee_JLee_2011a,LSHou_JLee_HManouzi_2011,DPKouri_MHeinkenschloss_DRidzal_BGWaanders_2013,HCLee_JLee_2013,ERosseel_GNWells_2012,PCiloglu_HYucel_2023}, and   robust stochastic control \cite{PBenner_AOnwunto_MStoll_2016,PChen_AQuarteroni_GRozza_2013,AKunoth_CSchwab_2016,HTiesler_RMKirby_DXiu_TPreusser_2012a,JMFrutos_MKessler_AMunch_FPeriago_2016}. However, the vast majority of the literature about PDE-constrained optimization problems containing uncertainty  is for distributed optimal control problems; see, e.g., \cite{ABorzi_2010a,FNegri_AManzoni_GRozza_2015,AAAli_EUllman_MHinze_2017,EZuazua_2014,SGarreis_MUlbrich_2017,
LSHou_JLee_HManouzi_2011,DPKouri_MHeinkenschloss_DRidzal_BGWaanders_2013,HCLee_JLee_2013,ERosseel_GNWells_2012,PBenner_AOnwunto_MStoll_2016,AKunoth_CSchwab_2016,HTiesler_RMKirby_DXiu_TPreusser_2012a} and there exists limited work on the numerical solution of boundary optimal control problems; see, \cite{MDGunzburger_HCLee_JLee_2011a} for Neumann boundary control, \cite{PChen_AQuarteroni_GRozza_2013,JMFrutos_MKessler_AMunch_FPeriago_2016}  for Robin boundary control, and \cite{WZhao_MGunzburger_2023} for Dirichlet boundary control. Therefore, we are here interested in Dirichlet boundary control problems with random inputs of the form
\begin{eqnarray}\label{eqn:objec}
	\min \limits_{u \in U^{\textup{ad}}} \; \mathcal{J}(y,u) := \frac{1}{2} \int_{\Omega} \int_{\cD} (y(\bx, \omega)-y^d(\bx))^2 \, \d \mathbb{P}(\omega) \, \d\bx +  \frac{\alpha}{2} \int_{\partial \cD}  u^2(\bx) \,  \d s
\end{eqnarray}
subject to 
\begin{subequations}\label{eqn:constPDE}
  \begin{align}
    -\nabla \cdot (a(\bx,\omega) \nabla y(\bx, \omega)) &= f(\bx,\omega)               &&       \text{for}\ (\bx,\omega) \in \cD         \times \Omega,\\
     y(\bx, \omega) &= u(\bx) + b(\bx,\omega)  &&       \text{for}\ (\bx,\omega) \in \partial\cD \times \Omega,
  \end{align}
\end{subequations}
where the first term in \eqref{eqn:objec} is a measure of the distance between the state variable $y$ and the desired state $y^d$ in terms of expectation of $y-y^d$. Without loss of generality, we assume that the state $y$ is a random field, whereas the desired state $y^d$ is modelled deterministically. The second term  corresponds to a deterministic Dirichlet boundary control with a positive regularization parameter $\alpha >0 $.  Further, the closed convex admissible control set is defined by
\begin{equation}\label{defn:admis_space}
	U^{\textup{ad}} :=  \{ u \in L^2(\partial \cD) : \; u_a \leq  u(\boldsymbol{x}) \leq u_b, \;\; \forall \boldsymbol{x} \in \partial \mathcal{D} \},
\end{equation}
where the constant control bounds $u_a, u_b \in \mathbb{R}$ fulfill $u_a \leq u_b$. Last, we note that the gradient notation, $\nabla$, in \eqref{eqn:constPDE}, always refers to differentiation with respect to $\bx \in \cD$, unless otherwise stated.

Even for the deterministic setting,  Dirichlet boundary control problems involve some specific difficulties resulting from the fact that they are not of variational type when the control belongs to $L^2(\partial \cD)$. We refer to \cite{TApel_MMateos_JPfefferer_ARosch_2015,TApel_MMateos_JPfefferer_ARosch_2018,MBerggren_2004a,ECasas_JPRaymond_2006b,KDeckelnick_AGunther_MHinze_2009,MDGunzburger_SManservisi_2000a,SMay_RRannacher_BVexler_2013,MWinkler_2020} and references therein for a comprehensive understanding. When it comes to Dirichlet boundary control problems with random input data, the construction and analysis become more complex compared to its analogues in the deterministic setting. Solving strategies of stochastic optimal control problems in the literature are based on either sampling  or a discretization of the stochastic space, in particular, Monte Carlo (MC) \cite{AAAli_EUllman_MHinze_2017,PAGuth_VKaarnioja_FYKuo_CSchillings_IHSloan_2021a,AVBarel_SVandewalle_2019}, stochastic collocation (SC) \cite{ABorzi_GvonWinckel_2009a,HTiesler_RMKirby_DXiu_TPreusser_2012a,ERosseel_GNWells_2012}, or stochastic Galerkin method (SG) \cite{IBabuska_RTempone_GEZouraris_2004a,HCLee_JLee_2013,ERosseel_GNWells_2012}. To discretize the stochastic space, we use the stochastic Galerkin method, a variant of generalized polynomial chaos approximation \cite{DXiu_GEKarniadakis_2002}. In contrast to MC and SC, the SG  method is a nonsampling method and is based on the orthogonality of the  residuals into the polynomial space. Since the spatial and (stochastic) parametric spaces are separable,  the established numerical techniques in the deterministic setting can be adopted easily. Because of that, we utilize the stochastic Galerkin method to address Dirichlet boundary control problems with uncertain input data \eqref{eqn:objec}-\eqref{eqn:constPDE}.

The novelty in this study is that a stochastic Galerkin approximation scheme for a Dirichlet boundary control problem governed by an elliptic PDE  with random data inputs is investigated. The control objective is to minimize the expectation of a tracking cost functional with the deterministic constrained control. By using the very weak formulation \cite{ECasas_JPRaymond_2006b,KDeckelnick_AGunther_MHinze_2009} for the governing state equation \eqref{eqn:constPDE} within the physical domain and $L^2(\partial \cD)$ as control space, we  discuss the existence of a unique solution to the optimization problem \eqref{eqn:objec}-\eqref{eqn:constPDE}.  By leveraging the stochastic Galerkin method under the  finite dimensional noise assumption \cite{NWiener_1938a}, we convert our stochastic problem into a high-dimensional deterministic one. Then, we apply standard piecewise linear and continuous finite elements  for the discretization of the state, the adjoint state, and the control in the spatial domain. To deduce optimal error estimates, we first derive a priori error estimates for derivatives of the stochastic unknowns, that is, state $y(\bx, \xi)$ and adjoint state $p(\bx, \xi)$  with respect to the stochastic variable $\xi\in \Gamma$ for each $n=1,\ldots,N$ and $q_n\in \mathbb N_0$
\[
    \frac{\|\partial_{\xi_n}^{q_n+1} v(\cdot,\xi)\|_{H^1(\cD)}}{(q_n + 1)!}
     \leq C\,r_n^{q_n+1},
\]
cf. Lemma~\ref{lem:regularity_derivatives_xi}, where $r_n = \|a_n\|_{L^\infty( \cD)}/a_{\min}$ with $a_n$ the deterministic modes of a Karhunen--Lo\`{e}ve expansion of the diffusion coefficient $a$. This regularity estimate allows us to derive even error estimates for the stochastic Galerkin method in the $L^2(\Omega;L^2(\cD))$-norm, complementing existing results from \cite{IBabuska_RTempone_GEZouraris_2004a}. As a consequence, by integrating the above estimate with the projection estimates and a quasi-interpolant construction of the optimal control \cite{TApel_MMateos_JPfefferer_ARosch_2018,ECasas_JPRaymond_2006b} we derive our main result presented in Theorem~\ref{thm:final_estimate}
\[
  \|u-u_h\|_{L^2(\partial \cD)} + \|y-y_h^\gamma\|_{L^2(\Gamma;L^2(\cD))}\le C\left(h^{1/2}
  +  h^{-1/2}\sum \limits_{n=1}^N \left(\frac{\gamma_n\,r_n}{2}\right)^{q_n+1}
  \right),
\]
where $u_h$ and $y_h^\gamma$ are discrete  optimal control and state, respectively. Here,  $h$  and $\gamma_n$  refer to the mesh sizes in the spatial and stochastic (parametric) domains, whereas $q_n$   indicates the polynomial degree for each direction  $\xi_n$ within a multi-index $q = (q_1,\ldots,q_N)$. This result is  a worst-case estimate only which is sharp in case that the solution is as regular as one would expect on general convex polygonal domains $\cD$. In addition to these theoretical findings, we explicitly construct the saddle-point system and propose suitable preconditioners based on the  Schur complement to solve the corresponding linear systems. Numerical experiments indicate that we achieve exponential convergence concerning the polynomial degree  $Q$  of the discretization in the stochastic space, provided that the spatial discretization error does not dominate the total error. Further, we improve the convergence rate of the control  error to  $3/2$  with respect to $h$ by designing meshes that exhibit the superconvergence property \cite[Section 4]{TApel_MMateos_JPfefferer_ARosch_2018}.

The rest of this paper is organized as follows. Section~\ref{sec:model} presents the derivation of the first-order optimality conditions for the optimization problem \eqref{eqn:objec}-\eqref{eqn:constPDE}, given the relevant assumptions and notation. In section~\ref{sec:finite_dimension}, we convert our stochastic problem into a high-dimensional deterministic problem using the stochastic Galerkin method, and establish a finite element approximation for the corresponding optimality system. Section~\ref{sec:error_estimates} is devoted to the proof of the error estimates. This includes our primary result, Theorem~\ref{thm:final_estimate}, which explains the observed orders of convergence in terms of the mesh sizes and  polynomial orders. We present the matrix representation of the discretized optimality system  and discuss appropriate preconditioners for solving the related linear systems
in section~\ref{sec:linear_system}. In section~\ref{sec:numeric}  we provide some numerical experiments to support   our theoretical findings  and  to test the performance of the proposed preconditioners.
Last, some conclusion and concluding remarks are presented in section~\ref{sec:conclusion}.


\section{Stochastic Dirichlet boundary control problem}\label{sec:model}

Throughout this paper, we follow the standard notation (see, e.g., \cite{RAAdams_1975})  for Sobolev spaces $H^k(\mathcal{D})$ equipped with the norm $\| \cdot \|_{H^k(\mathcal{D})}$ and the seminorm  $| \cdot |_{H^k(\mathcal{D})}$ on a convex polygonal physical domain $\mathcal{D} \subset \mathbb{R}^d$ $(d=1,2,3)$ with Lipschitz boundary $\partial \mathcal{D}$. Inner products  on $H^k(\mathcal{D})$ and $H^k(\partial \mathcal{D})$ are denoted by $( \cdot,\cdot )_{H^k(\mathcal{D})}$ and $( \cdot,\cdot )_{H^k(\partial \mathcal{D})}$, respectively, and $C>0$ is a generic constant independent of any discretization parameters. A complete probability space is represented by $(\Omega, \mathfrak{F}, \mathbb{P})$, where $\Omega$  is a set of outcomes $\omega \in \Omega$, $\mathfrak{F}$ is a $\sigma$-algebra of events, and $\mathbb{P}: \mathfrak{F} \rightarrow [0,1]$ is the probability measure. For a random variable $v: \Omega \rightarrow H^k(\mathcal{D})$, the Bochner space, $L^p(\Omega;H^k(\mathcal{D}))$, is then defined by
\begin{equation*}
L^p(\Omega;H^k(\mathcal{D})): = \{v: \Omega \rightarrow H^k(\mathcal{D}) : v \; \; \text{strongly measurable}, \;  \|v \|_{L^p(\Omega;H^k(\mathcal{D}))} < \infty  \},
\end{equation*}
where
\[
 \|v \|_{L^p(\Omega;H^k(\mathcal{D}))}  = \left\{
                                \begin{array}{ll}
                                     \left( \int_{\Omega} \|v(\omega)\|^p_{H^k(\mathcal{D})} \, d\mathbb{P}(\omega) \right)^{1/p}, & \hbox{for  }  1 \leq p < \infty,\\
                                      \esssup \limits_{\omega \in \Omega} \|v(\omega)\|_{H^k(\mathcal{D})}, & \hbox{for  } p = \infty.
                                \end{array}
                             \right.
\] 
Moreover, we have the following isomorphism relation \cite{IBabuska_RTempone_GEZouraris_2004a}:
\[
    H^k(\mathcal{D}) \otimes L^p(\Omega) \simeq L^p(\Omega;H^k(\mathcal{D})) \simeq  H^k(\mathcal{D};L^p(\Omega)).
\]
Further, for a real--valued random field  $z: \cD \times \Omega \rightarrow \mathbb{R}$ in the probability space $(\Omega, \mathfrak{F}, \mathbb{P})$,  expected value  (also called as mean) and covariance  are given, respectively, by 
\begin{subequations}\label{eqn:cov}
	\begin{eqnarray}
		\mathbb{E}[z](\bx)&:=& \int_{\Omega} z(\bx,\omega) \, d\mathbb{P}(\omega) \qquad \quad \; \bx \in \mathcal{D}, \\
		\mathcal{C}_{z}(\bx,\widetilde{\bx}) &:=&  \int_{\Omega} (z(\bx,\omega) - \mathbb{E}[ z ](\bx)) (z(\widetilde{\bx},\omega) - \mathbb{E}[ z ](\bx) ) \, d\mathbb{P}(\omega) \quad \;\;  \bx,\widetilde{\bx} \in \mathcal{D}. \label{eqn:covb}
	\end{eqnarray}
\end{subequations} 
Setting $\bx=\widetilde{\bx}$ yields the variance $\mathcal{V}_{z}(\bx) = \mathcal{C}_{z}(\bx,\bx)$ and standard deviation $\kappa_{z}= \sqrt{\mathcal{V}_{z}}$.

To ensure the regularity of the solution $y$ in the state equation \eqref{eqn:constPDE}, we make the following assumptions on the given random inputs:
\begin{assumption}\label{asm:a_coef}
   There exists positive constants $0 < a_{\min} < a_{\max} < \infty$  such that  the diffusion coefficient satisfies
     \[
         a_{\min} \leq  a(\boldsymbol{x},\omega) \leq a_{\max}
     \]
   for almost every  $(\boldsymbol{x},\omega) \in  \mathcal{D} \times \Omega$ and the coefficient  $a(\cdot,\omega)$ belongs to  $C^1(\cD)$.
\end{assumption}
\begin{assumption}\label{asm:fb_coef}
   The stochastic functions  $f(\bx,\omega)$ and $b(\bx,\omega)$ have continuous and bounded covariance functions and belong to 
     \[
         b \in  L^2(\Omega; H^{1/2}(\partial\cD)) \quad \text{and} \quad f \in  L^2(\Omega; L^{2}( \cD)).
     \]
    Moreover, the desired state is defined in a deterministic way and fulfills $y^d \in L^2(\cD)$. 
\end{assumption}
\noindent Then, by the bilinear form 
\begin{equation*}
  a[y,v] = \int_\Omega \int_\cD a(\bx,\omega)\, \nabla y(\bx,\omega)\cdot \nabla v(\bx,\omega)\,\d \bx\,\d\P(\omega)
\end{equation*}
and scalar products
\begin{align*}
  [y,v] &= \int_\Omega \int_\cD y(\bx,\omega)\, v(\bx,\omega)\,\d \bx\,\d\P(\omega), \\
  [y,v]_{\partial\cD} &= \int_\Omega\int_{\partial\cD} y(\bx,\omega)\, v(\bx,\omega)\,\d s_\bx\,\d\P(\omega),
\end{align*}
the standard weak formulation of the state equation \eqref{eqn:constPDE} for a fixed control $u\in H^{1/2}(\partial\cD)$ can be written as 
\begin{equation}\label{eq:weak_satte}
 y|_{\partial\cD}= b+u,\quad a[y,v] = [f,v] \qquad \forall \ v\in L^2(\Omega; H_0^1(\cD)).
\end{equation}

However, due to the low \emph{a priori} regularity of the control $u\in U:= L^2(\partial \cD)$ in the optimal control problem, we must understand  the state equation \eqref{eqn:constPDE} in a very weak sense first, i.\,e., we seek $y \in L^2(\Omega; L^2(\cD))$ satisfying
\begin{align}\label{eqn:state_weak}
  \int_\Omega \int_\cD - y \, \nabla \cdot (a\, \nabla v) \,\d \bx \, \d \P & + \int_\Omega \int_{\partial\cD} (b+u)\, a \,  \partial_n v\,\d s_\bx
  \d \P = \int_\Omega \int_\cD f\,v \,\d \bx \, \d \P
\end{align}
for all $v\in L^2(\Omega; H_0^1(\cD) \cap H^2(\cD))$ with $\partial_n v := \nabla v \cdot \mathbf{n}$ and $\mathbf{n}$ representing the unit outward normal vector to $\partial \cD$. For each $u\in U^{\textup{ad}}$ there exists a unique solution $y$ which even belongs to $L^2(\Omega; H^{1/2}(\cD))$, provided that $\cD$ is a convex polygonal domain; see, e.g., \cite{ECasas_1985} for a similar argumentation in the deterministic setting. This allows to introduce the control-to-state operator $S\colon U^{\textup{ad}} \to L^2(\Omega; L^2(\cD))$ defined by  $u\mapsto S(u) := y$, and one confirms by implicit differentiation that $S$ is of class $C^\infty$ and $\widetilde y = S'(u) \widetilde u$ is characterized by the very weak formulation
\begin{equation*}
  \int_\Omega\left[-\int_\cD \widetilde y \, \nabla \cdot (a\, \nabla v) \, \d \bx + \int_{\partial\cD} \widetilde u \, a \,  \partial_n v \, \d s_\bx \right] \d\P = 0
\end{equation*}
for all $v\in L^2(\Omega; H_0^1(\cD) \cap H^2(\cD))$.  With the chain rule we find that the objective of the reduced optimization problem $j(u) := \mathcal{J}(S(u),u)\to\min!$ is G\^ateaux differentiable.
Therefore, we introduce the adjoint equation for an adjoint state $p\in L^2(\Omega;H_0^1(\cD)\cap H^2(\cD))$ by
\begin{equation}\label{eq:adjoint}
  -\int_\Omega \int_\cD \nabla \cdot (a\, \nabla p) \, v \,\d \bx\,\d\P = \int_\Omega\int_\cD (y-y^d)\,v\,\d\bx\,\d\P
\end{equation}
for all $v\in L^2(\Omega; L^2(\cD))$ and by choosing 
$v=\widetilde y = S'(u)\widetilde u$ in \eqref{eq:adjoint} we deduce
\begin{align}\label{eq:derivative_reduced_begin}
  j'(u)\widetilde u &= \int_\Omega \left(\int_\cD (y - y^d)\,\widetilde y\,\d \bx
                  + \alpha\,\int_{\partial\cD}u\,\widetilde u\,\d s_\bx \right)\d\P
                  \nonumber\\
                &= \int_\Omega\left( -\int_\cD \widetilde y \, \nabla \cdot (a\, \nabla p) \,\d \bx
                  + \alpha\,\int_{\partial\cD} u\,\widetilde u\,\d s_\bx\right)\d\P
                  \nonumber\\
                &= \int_\Omega
                  \int_{\partial\cD} (\alpha\,u - a \, \partial_n p)\,\widetilde u\,\d s_\bx\,\d\P.
\end{align}
From \eqref{eq:derivative_reduced_begin}, it is observed that the variational inequality $j'(u)(\widetilde u-u) \geq 0$ for all $\widetilde u \in  U^{\textup{ad}}$ links the adjoint state to the control by means of $u = \mathcal P_{U^{\textup{ad}}} \left(\frac{1}{\alpha} \, \mathbb{E}[a\, \partial_n p] \right)$, where $\mathcal P_{U^{\textup{ad}}}$ denotes the pointwise projection onto the admissible set $U^{\textup{ad}}$. By standard bootstrapping arguments we can show that the optimal control and the optimal state must be even more regular:
\begin{align}\label{eq:bootstrapping}
  u \in U^{\textup{ad}}
  &\quad\Rightarrow\quad
  y\in L^2(\Omega;L^2(\cD))
  \quad\Rightarrow\quad
  p\in L^2(\Omega; H_0^1(\cD) \cap H^2(\cD) ) \nonumber\\
  &\quad\Rightarrow\quad 
  u = \mathcal P_{U^{\textup{ad}}} \left(\frac{1}{\alpha} \,\mathbb{E}[a\,\partial_n p] \right) \in L^2(\Omega; H^{1/2}(\partial\cD)) \nonumber\\
  &\quad\Rightarrow\quad
  y \in L^2(\Omega; H^1(\cD)),
\end{align}
which allows us to consider the state equation \eqref{eqn:constPDE} also in the standard weak form. We summarize  our investigations above in the following theorem:
\begin{theorem}
  For each given $y^d\in L^2(\cD)$, $f\in L^2(\Omega;L^2(\cD))$, $b\in L^2(\Omega; H^{1/2}(\partial \cD))$, and $\alpha>0$, the optimization problem \eqref{eqn:objec}--\eqref{eqn:constPDE} possesses a unique solution pair $(y,u)\in L^2(\Omega; H^1(\cD))\times U^{\textup{ad}}$ and there exists an adjoint state $p\in L^2(\Omega; H_0^1(\cD)\cap H^2(\cD))$ satisfying the following optimality system:
  \begin{subequations}\label{eq:opt_sys}
    \begin{alignat}{2}
        y|_{\partial\cD}= b+u,\quad a[y,v] &= [f,v] &\qquad &\forall v\in L^2(\Omega; H_0^1(\cD)),\label{eq:opt_sys_a}\\
        a[q,p] &= [y-y^d,q] &&\forall q\in L^2(\Omega; H_0^1(\cD)),\label{eq:opt_sys_b}\\
        (\alpha\,u - \E[a \,\partial_n p], w-u)_{\partial \cD} &\geq 0 &&\forall w \in U^{\textup{ad}}. \label{eq:opt_sysc}
      \end{alignat}
  \end{subequations}
\end{theorem}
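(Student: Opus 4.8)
The plan is to establish existence by the direct method of the calculus of variations, uniqueness by strict convexity, and then to \emph{read off} the optimality system from the first-order conditions already assembled in \eqref{eq:derivative_reduced_begin}, upgrading regularity through the bootstrapping chain \eqref{eq:bootstrapping}.

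First I would prove existence of a minimizer of the reduced functional $j(u):=\mathcal{J}(S(u),u)$ over $U^{\textup{ad}}$. The set $U^{\textup{ad}}$ is nonempty (as $u_a\le u_b$), convex and closed; since $\partial\cD$ has finite surface measure, the pointwise box bounds force $\|u\|_{L^2(\partial\cD)}\le \max(|u_a|,|u_b|)\,|\partial\cD|^{1/2}$, so $U^{\textup{ad}}$ is bounded, hence weakly sequentially compact in the Hilbert space $L^2(\partial\cD)$ and, being convex and closed, weakly closed. Because the state equation \eqref{eqn:constPDE} is linear in $(y,u)$, the control-to-state map $S$ is affine and (by the transposition estimate for \eqref{eqn:state_weak}) bounded from $L^2(\partial\cD)$ into $L^2(\Omega;L^2(\cD))$; thus the tracking term $u\mapsto\tfrac12\|S(u)-y^d\|_{L^2(\Omega;L^2(\cD))}^2$ is convex and continuous, and adding the continuous, strictly convex Tikhonov term $\tfrac{\alpha}{2}\|u\|_{L^2(\partial\cD)}^2$ renders $j$ convex and continuous, hence weakly lower semicontinuous. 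Taking a minimizing sequence $u_k\in U^{\textup{ad}}$, extracting $u_k\rightharpoonup\bar u\in U^{\textup{ad}}$, and passing to the limit through weak lower semicontinuity yields a minimizer $\bar u$.

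For uniqueness I would observe that the affineness of $S$ together with $\alpha>0$ makes $j$ \emph{strictly} convex on the convex set $U^{\textup{ad}}$, so the minimizer is unique; the state $y=S(\bar u)$ is then unique by well-posedness of the very weak formulation \eqref{eqn:state_weak}. To produce the optimality system I would use that a convex, G\^ateaux-differentiable functional attains its minimum over a convex set exactly at points satisfying the variational inequality $j'(\bar u)(w-\bar u)\ge 0$ for all $w\in U^{\textup{ad}}$, a condition here both necessary and sufficient. Since $\bar u$ and $w$ are deterministic, inserting the derivative formula \eqref{eq:derivative_reduced_begin} and pulling the controls out of the $\omega$-integral gives precisely \eqref{eq:opt_sysc} with $\E[a\,\partial_n p]$, while \eqref{eq:opt_sys_a} and \eqref{eq:opt_sys_b} are the defining (very weak, then weak) state and adjoint equations for $y$ and $p$. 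Finally, the claimed regularity $p\in L^2(\Omega;H_0^1(\cD)\cap H^2(\cD))$ and the upgrade $y\in L^2(\Omega;H^1(\cD))$ follow from \eqref{eq:bootstrapping}, using convexity of $\cD$ for the $H^2$ elliptic regularity of the adjoint problem and the projection identity $u=\mathcal{P}_{U^{\textup{ad}}}\bigl(\tfrac1\alpha\E[a\,\partial_n p]\bigr)$ to lift the control into $L^2(\Omega;H^{1/2}(\partial\cD))$.

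The main obstacle is not the variational argument, which is routine once convexity and weak compactness are in place, but justifying the functional-analytic setting in the very weak regime: one must know that for every $u\in L^2(\partial\cD)$ the transposition solution of \eqref{eqn:state_weak} exists, is unique, and depends affinely and boundedly on $u$ with values in $L^2(\Omega;L^2(\cD))$ (indeed $L^2(\Omega;H^{1/2}(\cD))$), and that the tracking misfit $y-y^d$ is regular enough to invoke $H^2$ elliptic regularity for the adjoint equation on the convex polygon $\cD$. These are exactly the transposition and regularity facts recorded before the theorem (cf.\ the discussion around \eqref{eqn:state_weak} and \cite{ECasas_1985}); granting them, the remaining steps of the proof are standard.
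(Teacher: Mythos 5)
Your proposal is correct and follows essentially the same route as the paper: the paper's own ``proof'' is the discussion preceding the theorem (very weak formulation and its well-posedness via transposition, the control-to-state operator $S$, the derivative formula \eqref{eq:derivative_reduced_begin}, the variational inequality with the projection formula, and the bootstrapping chain \eqref{eq:bootstrapping}), all of which you reproduce in the same order and with the same ingredients. The only material you add is the standard direct-method/strict-convexity argument for existence and uniqueness of the minimizer, which the paper leaves implicit as routine convex-optimization theory, so your write-up is a faithful, slightly more self-contained version of the paper's argument rather than a different one.
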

\noindent The variational inequality \eqref{eq:opt_sysc}  is also equivalent to 
\begin{equation}\label{eq:variationalform_cont}
	(\alpha \, u, w-u)_{\partial \cD} + [y-y^d, y(w) - y]  \geq 0 \qquad \forall w \in U^{\textup{ad}},
\end{equation}
cf.\ \eqref{eq:derivative_reduced_begin}, where $y(w)$ is the solution of the very weak problem \eqref{eqn:state_weak} when $u$ is replaced by $w \in U^{\textup{ad}} \subset L^2(\partial \cD)$.


\section{Reduction to finite dimension}\label{sec:finite_dimension} 

To solve the optimization problem containing randomness \eqref{eqn:objec}--\eqref{eqn:constPDE} numerically, we need to reduce it into a finite dimensional setting. Now, we first discuss the representation of random fields by employing a finite dimensional noise assumption \cite{NWiener_1938a} and then provide  the stochastic Galerkin method with  appropriate functional spaces \cite{IBabuska_RTempone_GEZouraris_2004a}.

\subsection{Representation of random fields}

To characterize the given random coefficients and functions with known covariance information, we employ a finite Karhunen--Lo\`{e}ve (KL) expansion \cite{KKarhunen_1947,MLoeve_1946} as follows 
\begin{equation}\label{eqn:kltrun}
	z(\boldsymbol{x},\omega) \approx z_{N}(\boldsymbol{x},\omega) := \overline{z}(\boldsymbol{x}) + \kappa_{z} \,\sum \limits_{k=1}^{N} \sqrt{\lambda_k}\,w_k(\boldsymbol{x})\,\xi_k(\omega).
\end{equation}
In \eqref{eqn:kltrun}, $\overline{z}(\boldsymbol{x})$ and $\kappa_{z}$ are   mean and standard deviation of the random field $z$, respectively. $\xi:=\{\xi_i(\omega)\}_{i=1}^{N}$ are mutually uncorrelated random variables with zero mean and unit variance, i.e., $\mathbb{E}[\xi_i]=0$ and $\mathbb{E}[\xi_i \xi_j]= \delta_{i,j}$. A sequence of eigenpairs $\{\lambda_k,w_k\}_{i=1}^{N}$ obtained by solving the Fredholm integral equation with the kernel $\mathcal{C}_{z}$ consists of non-increasing eigenvalues $\lambda_1 \geq  \ldots \geq  \lambda_k \geq \ldots >0$ and mutually orthonormal  eigenfunctions. The value of $N$, called truncation number, is determined according to the decay of the eigenvalues $\lambda_k$.  To satisfy the positivity requirement for the diffusivity coefficient $a(\boldsymbol{x},\omega)$ in the Assumption~\ref{asm:a_coef},  we further impose a similar condition  as  follows 
\begin{equation}\label{assumption_truncatedD}
  \exists \, a_{\min}, \, a_{\max} >0, \quad \text{s.t.} \quad a_{\min} \leq  a_{N}(\boldsymbol{x},\omega) \leq a_{\max}
\end{equation}
for almost every  $(\boldsymbol{x},\omega) \in  \mathcal{D} \times \Omega$.

To represent the stochastic solutions, i.e., $y$ and $p$,  in the finite setting, we recall the  finite dimensional noise assumption \cite{NWiener_1938a}.

\begin{assumption} \label{asm:finite_noise}
Any general second order random process $X(\omega)$ can be characterized by
finite uncorrelated components $\{\xi_i(\omega)\}_{i=1}^{N}$, $N \in \mathbb{N}$ having zero mean and unity variance. Here, the probability density functions of $\{\xi_i(\omega)\}_{i=1}^{N}$ are denoted by $\rho_i \, : \, \Gamma_i \rightarrow \mathbb{R}^{+}_0$, where $\Gamma_i = \xi_i (\Omega) \subset \mathbb{R}$  are bounded intervals. Then, the corresponding probability space becomes $(\Gamma, \mathcal{B}(\Gamma), \rho(\xi)\d\xi)$, where $\Gamma = \prod \limits_{i=1}^N \Gamma_i \subset \mathbb{R}^N$ represents the support of such probability density,  $\mathcal{B}(\Gamma)$ is a Borel $\sigma$--algebra, $\rho(\xi)=\prod \limits_{i=1}^N \rho_i(\xi_i)$ is the joint probability density function, and $\rho(\xi)\d\xi$ corresponds to the probability measure of $\xi$. 
\end{assumption} 
By Assumption~\ref{asm:finite_noise} and the Doob--Dynkin Lemma \cite{BOksendal_2003}, the stochastic solutions $y$ and $p$ in \eqref{eq:opt_sys} are expressed  by a finite number of random variables, that is,  $y(\boldsymbol{x},\omega)= y(\boldsymbol{x},\xi_1(\omega), \ldots, \xi_N(\omega)) \in L^2(\Gamma; H^1(\cD))$ and $p(\boldsymbol{x},\omega)= p(\boldsymbol{x},\xi_1(\omega), \ldots, \xi_N(\omega)) \in L^2(\Gamma; H_0^1(\cD)\cap H^2(\cD))$. The optimality system \eqref{eq:opt_sys} reduces to a finite parametric space
  \begin{subequations}\label{eq:opt_p}
    \begin{alignat}{2}
        y|_{\partial\cD}= b_{N} + u,\quad a[y,v]_{\rho} &= [f_N,v]_{\rho} &\qquad &\forall v\in L^2(\Gamma; H_0^1(\cD)),\label{eq:opt_pa}\\
        a[q,p]_{\rho} &= [y-y^d,q]_{\rho} &&\forall q\in L^2(\Gamma; H_0^1(\cD)), \label{eq:opt_pb}\\
        (\alpha\,u - \E[a_{N}\,\partial_n p], w-u)_{\partial \cD} &\geq 0 &&\forall w \in U^{\textup{ad}}, \label{eq:opt_pc}
      \end{alignat}
  \end{subequations}
where
\begin{align*}
  & a[y,v]_{\rho} = \int_\Gamma \int_\cD a_N \, \nabla y\cdot \nabla v\,\d \bx\, \rho\, \d\xi, \\
  & [y,v]_{\rho} = \int_\Gamma \int_\cD y\, v\,\d \bx\,\rho\, \d\xi, \qquad  (y,v)_{\partial\cD} =  \int_{\partial\cD} y\, v\,\d s_\bx.
\end{align*}
Analogous to \eqref{eq:variationalform_cont}, the optimality condition in \eqref{eq:opt_pc} can be stated as
\begin{equation}\label{eq:variationalform_parametric}
    ( \alpha \, u, w-u)_{\partial \cD} + [y-y^d, y(w) - y]_{\rho}  \geq 0 \qquad \forall w \in U^{\textup{ad}},
\end{equation}
where $y(w)$ is the solution of the problem \eqref{eqn:state_weak} on the finite parametric domain $\Gamma \subset \mathbb{R}^N$ when $u$ is replaced by $w$ and the random coefficients $a, b$ and the random function $f$ are replaced by the truncated ones  $a_N, b_N$, and $f_N$, respectively.

We conclude this section with some \emph{a priori} estimates for the solutions of \eqref{eq:opt_pa} and \eqref{eq:opt_pb}. In particular derivatives with respect to the stochastic variable are required to deduce optimal error estimates.
\begin{lemma}
  \label{lem:regularity_derivatives_xi}
  For given $g_N= \bar g + \sum_{i=1}^N\xi_i\,g_i\in C^\infty(\Gamma; H^1(\cD))$ and $F\in C^\infty(\Gamma; L^2(\cD))$ let $v\in L^2(\Gamma; H^1(\cD))$ be the solution of
  \begin{equation}\label{eq:general_bvp}
    v|_{\partial \cD} = g_N|_{\partial \cD},\qquad a[v,w]_\rho = [F,w]_\rho\quad \forall w\in L^2(\Gamma; H_0^1(\cD)).
  \end{equation}
  Then, for each $n=1,\ldots,N$ and $q_n\in \mathbb N_0$ there holds 
  \begin{align*}
    \frac{\|\partial_{\xi_n}^{q_n+1} v(\cdot,\xi)\|_{H^1(\cD)}}{(q_n + 1)!}
    &\le C\,\sum_{j=0}^{q_n+1} \frac{1}{j!}\,r_n^{q_n+1-j}\,\|\partial_{\xi_n}^j F(\cdot,\xi)\|_{L^2(\cD)} \\
    &\quad + r_n^{q_n}(1+\frac{a_{\max}}{a_{\min}})\left(r_n\,\|g_N(\cdot,\xi)\|_{H^1(\cD)} + \|g_n\|_{H^1(\cD)}\right)
  \end{align*}
   for almost all $\xi\in \Gamma$ with $r_n = \frac{\|a_n\|_{L^\infty( \cD)}}{a_{\min}}$.
\end{lemma}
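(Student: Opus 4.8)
The plan is to homogenize the boundary condition, differentiate the pointwise weak form repeatedly in $\xi_n$, and exploit the fact that \emph{both} $a_N$ and $g_N$ depend affinely on $\xi_n$ to reduce everything to a first-order recursion in the derivative order. First I would set $\phi := v - g_N$, so that $\phi(\cdot,\xi)\in H_0^1(\cD)$ for almost all $\xi$ and $\phi$ solves
\[
  a[\phi,w]_\rho = [F,w]_\rho - \int_\Gamma\!\int_\cD a_N\,\nabla g_N\cdot\nabla w\,\d\bx\,\rho\,\d\xi \qquad \forall w\in L^2(\Gamma;H_0^1(\cD)).
\]
Since the spatial and parametric variables separate this holds pointwise in $\xi$, and the uniform ellipticity of Assumption~\ref{asm:a_coef} together with the smooth (indeed affine) $\xi_n$-dependence of the data lets me argue by implicit differentiation that $\phi(\cdot,\xi)$ is $C^\infty$ in $\xi$ with values in $H_0^1(\cD)$, which legitimizes differentiating the weak identity in $\xi_n$. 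Writing $\phi_k := \partial_{\xi_n}^k\phi$ and using Leibniz with $\partial_{\xi_n}a_N = a_n$, $\partial_{\xi_n}^2 a_N = 0$, $\partial_{\xi_n}g_N = g_n$, $\partial_{\xi_n}^2 g_N = 0$, only two terms survive in the principal part, so that for fixed $\xi$ and all $w\in H_0^1(\cD)$,
\[
  \int_\cD a_N\,\nabla\phi_k\cdot\nabla w\,\d\bx = \int_\cD (\partial_{\xi_n}^k F)\,w\,\d\bx - k\int_\cD a_n\,\nabla\phi_{k-1}\cdot\nabla w\,\d\bx - \int_\cD \partial_{\xi_n}^k(a_N\nabla g_N)\cdot\nabla w\,\d\bx.
\]
The crucial structural observation is that the last (boundary-data) forcing is nonzero only for $k\in\{0,1,2\}$, producing $a_N\nabla g_N$, $a_n\nabla g_N + a_N\nabla g_n$, and $2a_n\nabla g_n$, respectively.

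Next I would test with $w=\phi_k$, use coercivity $\int_\cD a_N|\nabla\phi_k|^2\,\d\bx\ge a_{\min}\|\nabla\phi_k\|_{L^2(\cD)}^2$ together with Cauchy--Schwarz and Poincar\'e (the Poincar\'e constant absorbed into the generic $C$), and divide by $\|\nabla\phi_k\|_{L^2(\cD)}$. With $b_k := \|\nabla\phi_k\|_{L^2(\cD)}$ and $r_n = \|a_n\|_{L^\infty(\cD)}/a_{\min}$ this gives the first-order recursion $b_k \le \alpha_k + k\,r_n\,b_{k-1}$, where $\alpha_k = \frac{C}{a_{\min}}\|\partial_{\xi_n}^k F\|_{L^2(\cD)} + \frac{G_k}{a_{\min}}$ collects the $F$- and boundary-data forcing, $G_k$ being supported on $k\le 2$ with $G_1 = \|a_n\|_{L^\infty(\cD)}\|g_N\|_{H^1(\cD)} + a_{\max}\|g_n\|_{H^1(\cD)}$ and $G_2 = 2\|a_n\|_{L^\infty(\cD)}\|g_n\|_{H^1(\cD)}$. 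The base case follows by testing the $\phi_0$-equation with $\phi_0$, yielding $b_0 \le \frac{C}{a_{\min}}\|F\|_{L^2(\cD)} + \frac{a_{\max}}{a_{\min}}\|g_N\|_{H^1(\cD)}$.

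Finally I would unroll the recursion. Iterating $b_k\le\alpha_k + k\,r_n\,b_{k-1}$ produces the coefficient $\tfrac{k!}{i!}\,r_n^{k-i}$ in front of $\alpha_i$; dividing by $(q_n+1)!$ with $k=q_n+1$ turns these into the weights $\tfrac{1}{j!}\,r_n^{q_n+1-j}$ and reproduces exactly the $F$-sum claimed. Collecting the two nonzero $G_1,G_2$ contributions and the $b_0$ term, and substituting $r_n = \|a_n\|_{L^\infty(\cD)}/a_{\min}$, the boundary-data part recombines precisely into the factored expression $r_n^{q_n}\!\left(1+\tfrac{a_{\max}}{a_{\min}}\right)\!\left(r_n\|g_N\|_{H^1(\cD)} + \|g_n\|_{H^1(\cD)}\right)$. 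The return to the claimed quantity is completed by $\partial_{\xi_n}^{q_n+1}v = \phi_{q_n+1} + \partial_{\xi_n}^{q_n+1}g_N$, where for $q_n\ge 1$ the second summand vanishes and for $q_n=0$ it contributes the extra $\|g_n\|_{H^1(\cD)}$ already subsumed in the factored g-bound, and by $\|\phi_{q_n+1}\|_{H^1(\cD)}\le C\,b_{q_n+1}$.

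I expect the main obstacle to be the bookkeeping of the boundary-data forcing: it lives only on derivative orders $k\le 2$, yet after weighting by the factorial coefficients from the unrolled recursion it must reassemble into the clean factored form of the statement. Verifying this recombination, and in particular checking the low-order cases $q_n\in\{0,1\}$ separately (where the claimed bound is a harmless overestimate), is the delicate part, whereas the differentiation, coercivity estimate, and Gr\"onwall-type unrolling are routine. A secondary technical point is rigorously justifying the $C^\infty$ dependence on $\xi$ that is needed to differentiate the weak form in the first place.
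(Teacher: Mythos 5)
Your proposal is correct and follows essentially the same route as the paper's proof: homogenize via $v_0 = v - g_N$, differentiate the pointwise weak form in $\xi_n$ (exploiting the affine dependence of $a_N$ and $g_N$ so only two terms survive in the principal part and the boundary forcing dies out after order two), test with the derivative itself, and induct. If anything, your explicit recursion $b_k \le \alpha_k + k\,r_n\,b_{k-1}$ with the unrolled weights $\tfrac{k!}{i!}r_n^{k-i}$ and the verified recombination of the $g$-terms into the factored bound is more complete than the paper, which carries out only the cases $q_n \in \{0,1\}$ and dispatches the general case with ``repeated application of the above arguments.''
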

\begin{proof}
  Introducing the decomposition $v = v_0 + g_N$ with $v_0\in L^2(\Gamma; H^1_0(\cD))$ the equation \eqref{eq:general_bvp} is equivalent to
  \begin{equation}\label{eq:def_v0}
    \int_\cD a_N\nabla v_0 \cdot \nabla w\,\d\bx = \int_\cD \left(F\,w - a_N\nabla g_N\cdot \nabla w\right)\d \bx
  \end{equation}
  for all $w\in H_0^1(\cD)$ and almost everywhere in $\Gamma$.  
  We choose the test function $w = v_0$ and obtain with the Cauchy-Schwarz and the Poincar\'e inequality with constant $C_D>0$
  \begin{equation}\label{eq:reg_estimate_v0}
    |v_0|_{H^1(\cD)}
    \le \frac{C_D}{a_{\min}} \|F\|_{L^2(\cD)} + \frac{a_{\max}}{a_{\min}} \,|g_N|_{H^1(\cD)}.
  \end{equation}
  Taking now derivatives in \eqref{eq:def_v0} with respect to $\xi_n$ gives
  \begin{equation}\label{eq:def_v0_dxi}
    \int_{\cD} \left( a_n\nabla v_0\cdot \nabla w + a_N\nabla (\partial_{\xi_n} v_0)\cdot\nabla w\right) \d \bx
    =
    \int_{\cD}\left(\partial_{\xi_n} F\,w - a_n\nabla g_N\cdot\nabla w - a_N\nabla g_n\cdot\nabla w\right)\d \bx
  \end{equation}
  and choosing $w=\partial_{\xi_n} v_0$ together with \eqref{eq:reg_estimate_v0} yields
  \begin{align*}
    |\partial_{\xi_n} v_0|_{H^1(\cD)}
    &\le
      \frac{C_D}{a_{\min}} \left(\|\partial_{\xi_n} F\|_{L^2(\cD)} + r_n\,\|F\|_{L^2(\cD)}\right) \\
    & \quad + r_n\,(1+\frac{a_{\max}}{a_{\min}})\, |g_N|_{H^1(\cD)}
      + \frac{a_{\max}}{a_{\min}}\,|g_n|_{H^1(\cD)}.
  \end{align*}
  Taking again derivatives in \eqref{eq:def_v0_dxi} with respect to $\xi_n$ and using $\partial_{\xi_n}^2 g_N=0$ produce
  \begin{equation*}
     \int_{\cD} \left(2\,a_n\,\nabla(\partial_{\xi_n}v_0)\cdot \nabla w + a_N\nabla(\partial_{\xi_n}^2 v_0)\cdot \nabla w\right)\d\bx 
    = \int_{\cD} \left(\partial_{\xi_n}^2 F\,w - 2\,a_n\nabla(\partial_{\xi_n} g_N)\cdot\nabla w\right)\d \bx
  \end{equation*}
 and $w=\partial_{\xi_n}^2 v_0$ gives the desired estimate for the case $q_n=1$.
    
  The desired result follows by repeated application of the above arguments and the triangle inequality, taking into account $\partial_{\xi_n}^{q_n+1} g_N = 0$ for $q_n\ge 2$.
\end{proof}

The application of the regularity estimate from Lemma~\ref{lem:regularity_derivatives_xi} to the state equation presented in equation \eqref{eq:opt_pa} yields the following result.

\begin{corollary}
  \label{cor:state_regularity_wrt_stochastic}
  Let $y\in L^2(\Gamma;H^1(\cD))$ be the solution of \eqref{eq:opt_pa} with $u\in H^{1/2}(\partial\cD)$. Then, there holds
  \begin{align*}
    \frac{\|\partial_{\xi_n}^{q_n+1} y\|_{L^2(\Gamma; H^1(\cD))}}{(q_n+1)!}
    &\le C\,r_n^{q_n+1}\left(\|f_N\|_{L^2(\Gamma; L^2(\cD))} + \|u+b_N\|_{L^2(\Gamma; H^{1/2}(\partial\cD))}\right) \\
    &\quad + C\,r_n^{q_n}\left(\|f_n\|_{L^2(\Gamma; L^2(\cD))} + \|b_n\|_{L^2(\Gamma; H^{1/2}(\partial\cD))}\right) \\
    &\le C(u,a_N,f_N,b_N)\,r_n^{q_n+1}.
  \end{align*}
\end{corollary}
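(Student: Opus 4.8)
The plan is to apply Lemma~\ref{lem:regularity_derivatives_xi} directly to the state equation \eqref{eq:opt_pa}, identifying the abstract data in the lemma with the concrete data of the state problem. Specifically, I would set $F = f_N$ and take the boundary data $g_N = u + b_N$, where $b_N = \bar b + \sum_{i=1}^N \xi_i\,b_i$ is the truncated Karhunen--Lo\`eve expansion of $b$ from \eqref{eqn:kltrun}; since the control $u$ is deterministic, its stochastic derivatives vanish and the modes $g_i$ appearing in the lemma are exactly the KL modes $b_i$ of the boundary datum, with $g_n = b_n$. With these identifications, the solution $v$ of \eqref{eq:general_bvp} coincides with the state $y$, and the lemma's conclusion immediately gives a bound on $\|\partial_{\xi_n}^{q_n+1} y(\cdot,\xi)\|_{H^1(\cD)}/(q_n+1)!$ pointwise in $\xi$.

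The next step is to convert the pointwise-in-$\xi$ estimate into the $L^2(\Gamma;H^1(\cD))$ bound claimed in the corollary. I would square both sides of the lemma's inequality and integrate over $\Gamma$ against $\rho$, using Minkowski's inequality (the triangle inequality in $L^2(\Gamma)$) to pass the $L^2(\Gamma)$-norm through the finite sum over $j$ in the first line and through the sum of terms in the second line. This replaces each factor $\|\partial_{\xi_n}^j F(\cdot,\xi)\|_{L^2(\cD)}$ by $\|\partial_{\xi_n}^j f_N\|_{L^2(\Gamma;L^2(\cD))}$ and each $\|g_N(\cdot,\xi)\|_{H^1(\cD)}$, $\|g_n\|_{H^1(\cD)}$ by the corresponding $L^2(\Gamma;\cdot)$-norms. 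Here I would exploit the affine structure of $f_N$ and $b_N$ in $\xi$: because these truncated expansions are at most linear in each $\xi_n$, we have $\partial_{\xi_n} f_N = f_n$ and $\partial_{\xi_n}^j f_N = 0$ for $j\ge 2$, so the sum over $j$ collapses to just the $j=0$ and $j=1$ terms, producing the two groups $r_n^{q_n+1}(\|f_N\| + \|u+b_N\|)$ and $r_n^{q_n}(\|f_n\| + \|b_n\|)$ displayed in the corollary. The boundary norms enter as $H^{1/2}(\partial\cD)$ rather than $H^1(\cD)$ by a standard trace argument, absorbing the harmonic (or minimal-norm) extension constant into $C$.

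For the final, simplified bound $\le C(u,a_N,f_N,b_N)\,r_n^{q_n+1}$, I would observe that $r_n^{q_n} = r_n^{-1}\,r_n^{q_n+1}$ and collect $r_n^{-1}(\|f_n\| + \|b_n\|)$ together with the first-line data norms into a single constant depending only on the fixed data $u, a_N, f_N, b_N$ (through $a_{\min}, a_{\max}$, the KL modes, and $r_n$ for the finitely many indices $n=1,\dots,N$). The main obstacle I anticipate is purely bookkeeping rather than conceptual: one must verify that the two-term truncation of the $j$-sum is legitimate, i.e.\ that $F = f_N$ and $g_N = u + b_N$ genuinely have vanishing second and higher $\xi_n$-derivatives, which is exactly the affine-in-$\xi$ property built into the finite KL expansion \eqref{eqn:kltrun} and already used inside the proof of Lemma~\ref{lem:regularity_derivatives_xi} (via $\partial_{\xi_n}^{q_n+1} g_N = 0$). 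A minor subtlety is ensuring the Minkowski step is applied correctly so that no spurious dependence on $q_n$ creeps into the constant $C$; since the number of summands is bounded independently of $q_n$ after the collapse, this is harmless.
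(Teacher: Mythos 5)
Your proposal is correct and follows exactly the route the paper intends: the paper proves this corollary simply by invoking Lemma~\ref{lem:regularity_derivatives_xi} with $F=f_N$ and boundary data $u+b_N$, and your fleshed-out version (extension of $u+b_N$ and $b_n$ into $H^1(\cD)$ controlled by the $H^{1/2}(\partial\cD)$-norms, collapse of the $j$-sum to $j=0,1$ by the affine-in-$\xi$ structure of $f_N$, then $L^2(\Gamma)$-integration via the triangle inequality) supplies precisely the details the paper leaves implicit. Your handling of the final bound, absorbing $r_n^{-1}(\|f_n\|+\|b_n\|)$ into a data-dependent constant for the finitely many indices $n$, is also consistent with the stated constant $C(u,a_N,f_N,b_N)$.
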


\subsection{Stochastic Galerkin method}

To obtain a fully discrete scheme, we introduce finite element spaces on the parametric domain $\Gamma$ and the physical domain $\mathcal{D}$. We first decompose the parametric domain denoted by $\Gamma \subset \mathbb{R}^N$ into a finite number of disjoint $\R^N$--boxes $B_j^{N}$ as follows
\[
 \Gamma =  \bigcup \limits_{j \in J} B_j^{N} = \bigcup \limits_{j \in J} \prod \limits_{n=1}^{N} (r_j^{n},s_j^{n}),
\]
where $B_j^{N} \cap B_i^{N} = \emptyset$ for $j \neq i \in J$  and     $(r_j^{n},s_j^{n}) \subset \Gamma_n$ for $n=1,\ldots,N$ and $ j \in J$. On each $\Gamma_n$, the mesh size is denoted by 
\[
\gamma_n = \max \limits_{j \in J} \lvert s_j^{n} - r_j^{n}\rvert \quad \hbox{for} \;\; 1 \leq n \leq N
\]
and the maximum mesh size of $\Gamma$ becomes   $\gamma = \max \limits_{n=1,\ldots, N} \gamma_n$. The stochastic (parametric) finite element space is given by
\begin{equation}
\mathcal S^{\gamma} = \{\psi^{\gamma} \in L^2(\Gamma): \, \psi|_{B_j^N} \in \text{span}\big( \prod \limits_{n=1}^{N} \xi_n^{\ell_n} : \; \ell_n \in \mathbb{N}, \, \ell_n \leq q_n, \, n=1, \ldots, N  \big)\},
\end{equation}
which  has at most $q_n$ degree on each direction $\xi_n$ for a multi--index $q = (q_1,\ldots,q_N)$. 



Next, we consider finite element subspaces on the physical domain $\mathcal{D}$. Let  $\{ \mathcal{T}_h\}_h$ be a quasi-uniform family of triangulations of $\mathcal{D}$  with $\overline{\mathcal{D}} = \bigcup_{K \in \mathcal{T}_h} \overline{K}$ and maximum mesh size $h=\max \limits_{K \in \mathcal{T}_h} h_{K}$, where $h_{K}$ is the diameter of an element $K$.  Furthermore, $\mathcal{E}_h$ denotes the partition of the boundary $\partial\mathcal D$ induced by $\mathcal T_h$, meaning that each $E\in \mathcal E_h$ with $E\subset\partial\mathcal D$ is an edge of some triangle $ K \in \mathcal T_h$. Denoting the set of  all linear polynomials on $K$ by $\mathbb{P}(K)$,  the finite element spaces of state, test and control variables are defined by
\begin{align*}
  Y_h  &= \{y_h \in C(\mathcal{D}) : \,  y_h|_K\in \mathbb{P}(K) \quad \forall K \in \mathcal{T}_h\},
  \\
  V_h  &= \{v_h \in Y_h : \,  v_h|_{\partial \cD} = 0\}, \\
  U_h  &= \{u_h \in C(\partial\mathcal{D}) : \,  u_h|_E\in \mathbb{P}(E) \quad \forall E \in \mathcal{E}_h\}.
\end{align*}
To represent functions from $Y_h$ we introduce the nodal basis $\{\phi_i\}\subset Y_h$ satisfying  $\phi_m(x_n)=\delta_{mn}$ for all $m,n=1,\ldots,\N:=\text{dim}(Y_h)$, where $x_n$ are the nodes of the mesh $\mathcal{T}_h$. We number the nodes in such a way that $x_n$ are interior nodes for $n=1,\ldots,\N_{\textup{I}}$ and boundary nodes for $n=\N_{\textup{I}}+1,\ldots,\N=\N_{\textup{I}}+\N_{\textup{B}}$.


Now, we are ready to state the tensor product finite element spaces on $\mathcal{D} \otimes \Gamma $. As test space we define
\begin{equation*}
  \mathcal{V}_h^{\gamma} = V_h \otimes \mathcal{S}^{\gamma} = \text{span}\{
  \phi_h \, \psi^{\gamma}: \, \phi_h \in V_h, \, \psi^{\gamma} \in \mathcal{S}^{\gamma}
  \}
\end{equation*}
and the trial space is defined by
\begin{equation*}
	\mathcal{Y}_h^\gamma	 := Y_h\otimes \mathcal S^\gamma.
\end{equation*}

With the spaces defined above we can now state the following fully discrete optimal control problem as approximation of \eqref{eqn:objec}--\eqref{eqn:constPDE}:
\begin{subequations}
  \label{eq:discrete_ocp}
  \begin{equation}\label{eqn:objec_disc}
    \min \limits_{(y_h^\gamma,u_h) \in \mathcal{Y}_h^\gamma\times U^{\textup{ad}}_h} \; \mathcal{J}(y_h^\gamma,u_h) = \frac{1}{2} \int_{\Gamma} \int_{\mathcal{D}} \left( y_h^\gamma -y^d \right)^2 \d\boldsymbol{x} \, \rho(\xi)\, \d\xi + \frac{\alpha}{2}  \int_{\partial\mathcal{D}} u_h^2 \, \d\boldsymbol{x}
  \end{equation}
  subject to
  \begin{equation}\label{eqn:vari_disc}
    \begin{aligned}
      y_h^{\gamma}|_{\partial\cD}&= u_h + \Pi_h(b_{N}) \\
      a[y_h^{\gamma},v_h^\gamma]_{\rho} &= [f_N,v_h^\gamma]_{\rho}, \quad \forall v_h^\gamma \in \mathcal{V}_h^{\gamma},
    \end{aligned}
  \end{equation}
  and
  \begin{equation}\label{defn:add_dis}
    u_h\in U_h^{\textup{ad}} :=  \{ u_h \in U_h \colon u_a \leq  u_h(\boldsymbol{x}) \leq u_b,\ \text{f.a.a.}\ \boldsymbol{x} \in \partial \cD \}.
  \end{equation}
\end{subequations}
Here, $\Pi_h$, cf. \eqref{eq:l2_proj_u}, is a projection operator onto $U_h$.

A  pair $(y_h^{\gamma},u_h) \in \mathcal{Y}_h^{\gamma} \times U_h^{\textup{ad}}$ is a unique solution of the control problem \eqref{eq:discrete_ocp} if and only if  an adjoint state $p_h^{\gamma} \in \mathcal{V}_h^{\gamma}$ exists such that the triplet $(y_h^{\gamma},u_h,p_h^{\gamma}) \in \mathcal{Y}_h^{\gamma} \times U_h^{\textup{ad}} \times \mathcal{V}_h^{\gamma} $ solves the following discrete optimality system
\begin{subequations}\label{eq:opt_ph}
  \begin{alignat}{2}
    y_h^{\gamma}|_{\partial\cD}= u_h + \Pi_h(b_{N}),\quad a[y_h^{\gamma},v_h^\gamma]_{\rho} &= [f_N, v_h^\gamma]_{\rho} &\qquad &\forall v_h^\gamma \in \mathcal{V}_h^{\gamma},
    \label{eq:state_equation_discrete}\\
    a[q_h^\gamma,p_h^{\gamma}]_{\rho}  &= [y_h^{\gamma}-y^d,q_h^\gamma]_{\rho} &&\forall q_h^\gamma \in \mathcal{V}_h^{\gamma},
    \label{eq:adjoint_equation_discrete}\\
    (\alpha\,u_h - \E[g_h], w_h-u_h)_{\partial \cD} &\geq 0 &&\forall  w_h \in U^{\textup{ad}}_h.\label{eq:opt_cond_discrete} 
  \end{alignat}  
\end{subequations}
Here, $\E[g_h]\in U_h$ is also a variational co-normal derivative defined by
\[
(g_h, w_h)_{\partial\cD} = a[S_h^{\gamma}(w_h), p_h^\gamma]_\rho - [y_h^\gamma-y^d, S_h^{\gamma}(w_h)]_\rho \qquad \forall w_h\in U_h,
\]
with $S_h^{\gamma} \colon  U_h \to \mathcal{Y}_h^{\gamma}$ some arbitrary discrete extension operator.

Last, we note that the discrete variational inequality \eqref{eq:opt_cond_discrete}  can also be written as
\begin{equation}\label{eq:variationalform_full}
  j_h^\gamma{'}(u_h) (w_h - u_h) = (\alpha \, u_h, w_h-u_h)_{\partial \cD} + [y_h^{\gamma}-y^d, y_h^{\gamma} (w_h) - y_h^{\gamma}]_{\rho}  \geq 0, \quad \forall w_h \in U^{\textup{ad}}_h,
\end{equation}
where $j_h^\gamma(u_h) = \mathcal J(y_h^\gamma (u_h), u_h)$ and 
$y_h^\gamma(w_h)$ is the solution of the problem \eqref{eq:state_equation_discrete} with boundary data $w_h + \Pi_h(b_{N})$. 



\section{Error estimates}\label{sec:error_estimates} 

In this section, we derive estimates for the approximation error of the solution of the discretized problem  \eqref{eq:opt_ph}. Before the derivation of the estimates,  we introduce some projection operators into the approximation spaces. First, consider the $L^2$-projection operator in the parametric domain  $\Pi_{\gamma}\colon L^2(\Gamma) \rightarrow \mathcal{S}^{\gamma}$  
\begin{equation}
  (\Pi_{\gamma}(\xi) - \xi, \zeta)_{L^2(\Gamma)} = 0  \qquad \forall  \zeta \in \mathcal{S}^{\gamma}.
\end{equation}
From \cite[Equation (3.6)]{IBabuska_RTempone_GEZouraris_2004a}, for sufficiently smooth $w \in L^2(\Gamma)$ we have
\begin{eqnarray}\label{est:parametric}
  \|w - \Pi_\gamma(w)\|_{L^2(B_j^N)} \leq
  \sum_{n=1}^N \left(\frac{\gamma_n}{2}\right)^{q_n+1} \frac{\|\partial_{\xi_n}^{q_n+1} w\|_{L^2(B_j^N)}}{(q_n+1)!}
\end{eqnarray}
for all $j\in J$.

Furthermore, the $L^2$-projection operator on the boundary of the physical domain  $\Pi_h\colon L^2(\partial \cD) \rightarrow U_h$ 
\begin{equation}\label{eq:l2_proj_u}
    (\Pi_h(w)   - w, \vartheta)_{L^2(\partial \cD)} = 0  \qquad \forall  \vartheta \in U_h,
\end{equation}
satisfies (see \cite{PGCiarlet_1978a} and \cite{DAFrench_JTKing_1993})
\begin{subequations}\label{ineq:l2_projection}
    \begin{align}
        \|w -\Pi_h(w)\|_{L^2(\partial \cD)} &\leq C \, h^{s} \|w\|_{H^s(\partial \cD)} && \hbox{for}\ w \in H^s(\partial \cD), \; 0 \leq s \leq 2, \label{ineq:l2_projectiona} \\
        \|w -\Pi_h(w)\|_{L^2(\partial \cD)} &\leq C \, h^{s-1/2} \|w\|_{H^s(\cD)} && \hbox{for}\ w \in H^s(\cD), \; 1/2 \leq s \leq 2, \label{ineq:l2_projectionb} \\ 
        \|(I - \Pi_h)(a_N\,\partial_n w) \|_{L^2(\partial \cD)} &\leq C \, h^{1/2} \|w\|_{H^2(\cD)} && \hbox{for}\ w \in H^2(\cD).  \label{ineq:l2_projectionc}
    \end{align}
\end{subequations}
The Ritz projection denoted by $R_h: H^1_0(\cD) \rightarrow V_h$  (see, e.g., \cite[Sec. 3.1]{PGCiarlet_1978a}) is defined by
\begin{equation}\label{ritz_def}
 a[R_h(q), v_h]_{\rho} = a[q, v_h]_{\rho}, \quad  q \in H^1_0(\cD), \quad \forall   v_h \in V_h
\end{equation}
and satisfies $\| \nabla R_h(q)\|_{L^2(\cD)} \leq \| \nabla q\|_{L^2(\cD)}$ as well as
\begin{equation}\label{ritz_ineq}
 \| q -R_h(q)\|_{H^s(\cD)} \leq C \, h^{r -s} \|q\|_{H^r(\cD)} \quad  \hbox{for} \;\; q \in H^1_0(\cD) \cap H^r(\cD), \quad  0 \leq s \leq 1 \leq r \leq 2.
\end{equation}



\noindent Further, we introduce a discrete control-to-state map $U^{\textup{ad}} \ni w \mapsto y_h^{\gamma}(w) \in \mathcal{Y}_h^{\gamma}$ defined by
\begin{equation}\label{eqn:aux2}
  y_h^{\gamma}(w) |_{\partial\cD}= \Pi_{h}(w+b_{N}),\quad a[y_h^{\gamma}(w), v_h^\gamma]_{\rho} = [f_N,v_h^\gamma]_{\rho} \quad \forall v_h^\gamma \in \mathcal{V}_h^{\gamma}.
\end{equation}
Then, from \cite[Lemma~5.1]{ECasas_JPRaymond_2006b} and \cite[Theorem~2.3]{ECasas_JPRaymond_2006a} taking into account the linearity of the problem (considering the problem with either $f_N \equiv 0$ and $b_N \equiv 0$ or $u \equiv 0$), and the stability of $\Pi_h$ in $H^s(\partial \cD)$ for $0 \leq s \leq 1$  \cite{ECasas_JPRaymond_2006a},  the following inequality holds 
\begin{align}\label{ineq:yh_Piu}
  \|y_h^{\gamma}(w) \|_{\yH} &\le C \left( \|\Pi_h(w + b_{N})\|_{\uH} + \|f_N\|_{\yL}  \right) \nonumber \\
  &\le C \left( \|w + b_{N}\|_{\uH} + \|f_N\|_{\yL}  \right)
\end{align}
and the well-known inverse estimate \cite{MBerggren_2004a} yields
\begin{equation}\label{ineq:yh_Piu_inv}
    \|y_h^{\gamma}(w) \|_{\yH}  \leq C \left( h^{-1/2} \|\Pi_h(w + b_{N}) \|_{\uL}  + \|f_N\|_{\yL}  \right).
\end{equation}

For technical reasons we need to define some discrete approximation of the exact control, belonging to $U_h^{\textup{ad}}$ that can be used as test function in \eqref{eq:variationalform_full}. Note that $\Pi_h(u)$ does not necessarily belong to the discrete admissible set. We follow the construction of \cite{TApel_MMateos_JPfefferer_ARosch_2018,ECasas_JPRaymond_2006b} and define 
\begin{equation*}
	I_n = \int_{\partial\cD} d(\bx)\,\phi_{\mathcal N_{\textup{I}}+n}(\bx) \, \d s_{\bx},\quad n=1,\ldots,\mathcal N_{\textup B}.
\end{equation*}
for  $d = \alpha\,u - \E[a_N\,\partial_n p]$. Then, we define the desired test function by 
\begin{equation}\label{eq:def_uhstar}
	u_h^*(\bx) = \sum_{n=1}^{\mathcal N_{\textup{B}}} u_n^*\,\phi_{\mathcal N_{\textup{I}}+n}(\bx),
\end{equation}
where the coefficients are given by
\begin{equation*}
	u_n^* := \begin{cases}
	      \displaystyle\frac1{I_n} \int_{\partial\cD} d(\bx)\,u(\bx)\,\phi_{\mathcal N_{\textup{I}}+n}(\bx)\,\d s_{\bx}, &\quad \text{if}\ I_n\ne 0,\\
	      \displaystyle\frac{1}{|S_n|} \int_{S_n} u(\bx)\,\d s_{\bx}, &\quad\text{if}\ I_n=0,
	\end{cases}
\end{equation*}
with $S_n = \text{supp}(\phi_{\mathcal N_{\textup{I}}+n}|_{\partial\cD})$. The important properties following from this construction and which we need in the following are that $u_h^*$ belongs to $U_h^{\textup{ad}}$ and that $u_h^* = u$ on the active set. Both properties together also imply
\begin{equation}\label{eq:uhstar_in_varinequ}
	(\alpha\,u - \E[a_N\,\partial_n p], u_h^* - u)_{\partial\cD} = 0.
\end{equation}

To establish the main result of this section we first show a general a priori error estimate for the optimal control problem \eqref{eq:opt_p}, discretized by the stochastic Galerkin method.
\begin{theorem}
\label{thm:general_estimate}
 Let $(y,u,p) \in L^2(\Gamma;H^1(\cD)) \times U^{\textup{ad}} \times L^2(\Gamma;H^1_0(\cD) \cap H^2(\cD))$  and $(y_h,u_h,p_h) \in \mathcal{Y}_h^{\gamma} \times U_h^{\textup{ad}} \times  \mathcal{V}_h^{\gamma}$ be the solutions of \eqref{eq:opt_p} and \eqref{eq:opt_ph}, respectively. Then, we have the following a priori error estimates 
 \begin{align}\label{eq:basic_error_estimate}
  & \sqrt\alpha\|u-u_h\|_{L^2(\partial \cD)} + \|y - y_h^\gamma \|_{\yL} \nonumber \\
  & \le C\left( \|u - u_h^*\|_{L^2(\partial \cD)}
  + \|y-y_h^\gamma(u)\|_{L^2(\Gamma;L^2(\cD))}
  +\sup_{w_h\in U_h \setminus\{0\}} \frac{|a[S_h^\gamma (w_h), p]_{\rho}|}{\|w_h\|_{L^2(\partial \cD)}} \right),
 \end{align}
 where the generic constant $C$ is independent of $h, \gamma$ and $q = (q_1,\ldots,q_N)$, and the extension operator $S_h^\gamma\colon U_h \to \mathcal Y_h^\gamma$ is defined by $\widetilde y_h^\gamma = S_h^\gamma(w_h)$,
\begin{equation*}
	\widetilde y_h^\gamma|_{\partial\cD} = w_h,\qquad a[\widetilde y_h^\gamma, v_h^\gamma]_\rho = 0 \quad \forall  v_h^\gamma \in \mathcal V_h^\gamma.
\end{equation*}
\end{theorem}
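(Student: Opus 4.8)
The plan is to exploit the quadratic structure of both reduced functionals $j(u)=\mathcal J(S(u),u)$ and $j_h^\gamma(u_h)=\mathcal J(y_h^\gamma(u_h),u_h)$, whose second derivatives are constant and bounded below in the control by $\alpha\,\|\cdot\|_{L^2(\partial\cD)}^2$. Since $u_h^*\in U_h^{\textup{ad}}$ is admissible as a test function in \eqref{eq:variationalform_full}, discrete coercivity together with $j_h^\gamma{}'(u_h)(u_h^*-u_h)\ge 0$ gives
\[
\alpha\,\|u_h^*-u_h\|_{L^2(\partial\cD)}^2 \le j_h^\gamma{}''[u_h^*-u_h,\,u_h^*-u_h] = j_h^\gamma{}'(u_h^*)(u_h^*-u_h) - j_h^\gamma{}'(u_h)(u_h^*-u_h) \le j_h^\gamma{}'(u_h^*)(u_h^*-u_h).
\]
I would then split the direction as $u_h^*-u_h=(u_h^*-u)+(u-u_h)$ and compare with the continuous gradient: the continuous variational inequality gives $j'(u)(u-u_h)\le 0$ (valid since $u_h\in U^{\textup{ad}}$), and the orthogonality \eqref{eq:uhstar_in_varinequ} is precisely $j'(u)(u_h^*-u)=0$. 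Both facts collapse the right-hand side to the single consistency expression $\alpha\,\|u_h^*-u_h\|_{L^2(\partial\cD)}^2 \le \bigl[j_h^\gamma{}'(u_h^*)-j'(u)\bigr](u_h^*-u_h)$.

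The heart of the proof is to estimate $\bigl[j_h^\gamma{}'(u_h^*)-j'(u)\bigr](w_h)$ for arbitrary $w_h\in U_h$. Writing each gradient as $\alpha(\text{control},w_h)_{\partial\cD}+[\text{state}-y^d,\,\text{extension of }w_h]_\rho$, where the extension is the continuous $a_N$-harmonic extension $z(w_h)$ (zero source, boundary datum $w_h$) for $j'$ and $S_h^\gamma(w_h)$ for $j_h^\gamma{}'$, I would reorganise the difference, after inserting $\pm[y-y^d,S_h^\gamma(w_h)]_\rho$, into four pieces: (A) the control term $\alpha(u_h^*-u,w_h)_{\partial\cD}$; (B) a state-error term $[y_h^\gamma(u)-y,\,S_h^\gamma(w_h)]_\rho$; (C) a remainder $[S_h^\gamma(u_h^*-\Pi_h u),\,S_h^\gamma(w_h)]_\rho$ coming from $y_h^\gamma(u_h^*)-y_h^\gamma(u)=S_h^\gamma(u_h^*-\Pi_h u)$; and (D) the term $[y-y^d,\,S_h^\gamma(w_h)-z(w_h)]_\rho$. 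The key observation is that $S_h^\gamma(w_h)-z(w_h)$ has vanishing boundary trace, hence is admissible in the continuous adjoint equation \eqref{eq:opt_pb}; together with $a[z(w_h),p]_\rho=0$ (the continuous extension is $a_N$-harmonic and $p$ has zero trace), piece (D) reduces exactly to $a[S_h^\gamma(w_h),p]_\rho$, the third term of the estimate. Piece (A) is bounded by $\alpha\|u-u_h^*\|\,\|w_h\|$, while (B) and (C) are bounded by $C\bigl(\|y-y_h^\gamma(u)\|_{\yL}+\|u-u_h^*\|\bigr)\,\|S_h^\gamma(w_h)\|_{\yL}$, using $\|u-\Pi_h u\|\le\|u-u_h^*\|$ since $\Pi_h$ is the $L^2(\partial\cD)$-best approximation in $U_h$.

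Finally, I would divide by $\|w_h\|_{L^2(\partial\cD)}$, take the supremum to produce the third term, then set $w_h=u_h^*-u_h$ and absorb one factor $\|u_h^*-u_h\|$, which bounds $\|u_h^*-u_h\|$; the triangle inequality $\|u-u_h\|\le\|u-u_h^*\|+\|u_h^*-u_h\|$ then controls the control error. For the state, I would split $\|y-y_h^\gamma\|_{\yL}\le\|y-y_h^\gamma(u)\|_{\yL}+\|y_h^\gamma(u)-y_h^\gamma(u_h)\|_{\yL}$, the last difference being $S_h^\gamma(\Pi_h u-u_h)$ and hence controlled by $C\|u-u_h\|$ via \eqref{ineq:yh_Piu}. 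The main obstacle is the treatment of pieces (B) and (C): they require an $h$-robust $L^2(\cD)$-stability bound $\|S_h^\gamma(w_h)\|_{\yL}\le C\,\|w_h\|_{L^2(\partial\cD)}$ for the discrete harmonic extension, \emph{without} the $h^{-1/2}$ loss of the inverse-estimate bound \eqref{ineq:yh_Piu_inv}. Establishing this clean $L^2$ stability (by an Aubin--Nitsche/transposition duality argument, in the spirit of the cited Casas--Raymond estimates) is what guarantees that the state-approximation error enters with the sharp factor $\|y-y_h^\gamma(u)\|_{\yL}$ rather than an $h$-weighted one.
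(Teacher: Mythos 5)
Your proposal is correct, and it reaches the estimate by a genuinely different organization than the paper. The paper's proof is the classical ``sum of two variational inequalities'' argument: it tests \eqref{eq:variationalform_parametric} with $w=u_h$ and \eqref{eq:variationalform_full} with $w_h=u_h^*$, adds them, and then manipulates the right-hand side using two auxiliary states --- the \emph{continuous} state $y(u_h)$ driven by the discrete control (defined via the very weak form \eqref{eqn:aux1} and then reduced by integration by parts against the strong form of the adjoint in \eqref{eq:pr3}) and the discrete state $y_h^\gamma(u_h^*)$. In that route the term $-\|y-y_h^\gamma\|_{\yL}^2$ appears with a negative sign and is absorbed into the left-hand side, so the state error comes out of the same inequality; the orthogonality \eqref{eq:uhstar_in_varinequ} cancels the boundary term, and the consistency term arises from $y_h^\gamma(u_h^*)-y_h^\gamma = S_h^\gamma(u_h^*-u_h)$. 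You instead exploit the quadratic structure of the reduced functionals: coercivity of the discrete Hessian plus the two variational inequalities and \eqref{eq:uhstar_in_varinequ} collapse everything into the single gradient-consistency expression $[j_h^{\gamma\prime}(u_h^*)-j'(u)](u_h^*-u_h)$, which you then decompose into your pieces (A)--(D); the identification of (D) with $a[S_h^\gamma(w_h),p]_\rho$ --- by testing the continuous adjoint \eqref{eq:opt_pb} with the zero-trace function $S_h^\gamma(w_h)-z(w_h)$ and using $a[z(w_h),p]_\rho=0$ --- replaces the paper's pointwise integration-by-parts computation \eqref{eq:pr3} and avoids the very weak auxiliary state $y(u_h)$ altogether. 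What your route buys is modularity (the adjoint enters only variationally, and the argument is a clean ``coercivity plus consistency'' template); what the paper's route buys is that the state error is produced simultaneously rather than recovered afterwards by a triangle inequality and a stability bound. Two remarks on the details. First, the ``$h$-robust $L^2$ stability'' $\|S_h^\gamma(w_h)\|_{\yL}\le C\|w_h\|_{L^2(\partial\cD)}$ that you flag as the main obstacle is not an extra burden of your approach: it is exactly the Lipschitz estimate for $y_h^\gamma$ that the paper itself imports from \cite[Thm.~2.2]{ECasas_JPRaymond_2006b} in \eqref{eq:pr4}, and your proposed duality/transposition argument is how that result is proved; so both proofs stand on the same external ingredient. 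Second, in your final state-error step the reference to \eqref{ineq:yh_Piu} is not the right one --- that estimate bounds the $\yH$-norm by the $\uH$-norm of the boundary datum, whereas bounding $\|S_h^\gamma(\Pi_h u-u_h)\|_{\yL}$ by $C\|u-u_h\|_{L^2(\partial\cD)}$ requires precisely the $L^2$--$L^2$ Lipschitz stability just mentioned; with that citation corrected, your argument closes.
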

\begin{proof}
Setting $w=u_h \in U^{\textup{ad}}$ and $w_h= u_h^* \in U_h^{\textup{ad}}$ in \eqref{eq:variationalform_parametric} and \eqref{eq:variationalform_full}, respectively, we obtain
\begin{align}\label{pr1}
&\alpha\,\|u-u_h\|_{L^2(\partial \cD)}^2 \nonumber \\
& \quad =  \alpha\,(u, u-u_h)_{\partial\cD}
- \alpha\,(u_h, u_h^*-u_h)_{\partial\cD}
- \alpha\,(u_h,u-u_h^*)_{\partial\cD}
\nonumber \\
& \quad \le  [y- y^d, y(u_h) - y]_\rho
            +  [y_h^{\gamma} - y^d, y_h^{\gamma}(u_h^*) -  y_h^{\gamma}]_\rho - \alpha (u_h,u-u_h^*)_{\partial\cD},
\end{align}
where $y(u_h) \in L^2(\Gamma;L^2(\cD))$ with $y(u_h)|_{\partial \cD} =  b_N + u_h$ solves the following auxiliary problem 
\begin{align}\label{eqn:aux1}
  & \int_\Gamma \int_\cD - y(u_h)  \, \nabla \cdot (a_N \nabla v)   \, \d \bx\,\rho\, \d\xi + \int_\Gamma \int_{\partial\cD} (b_N + u_h)\, a_N \,  \partial_n v\,\d s_\bx
  \, \rho\,\d\xi \nonumber \\ 
  &\quad  = \int_\Gamma \int_\cD f_N \,v \,\d \bx \, \rho\, \d\xi \qquad\qquad \forall  v \in L^2(\Gamma; H_0^1(\cD) \cap H^2(\cD)).
\end{align}
Rearranging the terms in \eqref{pr1} and using Young's inequality give
\begin{align}\label{pr2}
\alpha\|u-u_h\|_{L^2(\partial \cD)}^2
& \le [y - y^d, y(u_h) - y]_{\rho} + [ y_h^{\gamma} - y, y -y_h^{\gamma}]_{\rho} \nonumber \\
& \quad + [y_h^{\gamma} - y, y_h^{\gamma}(u_h^*) - y]_{\rho} + [y - y^d, y_h^{\gamma}(u_h^*) - y_h^{\gamma}]_{\rho}
- \alpha (u_h,u-u_h^*)_{\partial\cD}\nonumber \\
&  =  [y - y^d, y(u_h) - y -  ( y_h^{\gamma} - y_h^{\gamma}(u_h^*) ) ]_{\rho} - \|y - y_h^{\gamma}\|_{\yL}^2\nonumber \\
& \quad + [y_h^{\gamma} - y, y_h^{\gamma}(u_h^*) - y]_{\rho} - \alpha (u_h,u-u_h^*)_{\partial\cD} \nonumber \\
&  \leq [y - y^d, y(u_h) - y -  ( y_h^{\gamma} - y_h^{\gamma}(u_h^*) ) ]_{\rho} - \frac12 \|y - y_h^{\gamma}\|_{\yL}^2\nonumber \\
& \quad  +  \frac12\,\|y - y_h^{\gamma}(u_h^*)\|_{\yL}^2 - \alpha (u_h,u-u_h^*)_{\partial\cD}.
\end{align}
We discuss the four terms on the right-hand side separately. For the first term, we use the strong formulation of the adjoint equation, the formulations \eqref{eqn:state_weak} and \eqref{eqn:aux1} with $v=p$,  and the integration-by-parts formula to deduce
\begin{align}\label{eq:pr3}
	&[y - y^d, y(u_h) - y -  ( y_h^{\gamma} - y_h^{\gamma}(u_h^*) ) ]_{\rho}	\nonumber\\
	&\quad = - (\E[a_N \partial_n p], u_h - u)_{\partial\cD} + a[y_h^\gamma(u_h^*) - y_h^\gamma, p]_{\rho}
	- (\E[a_N \partial_n p], u_h^* - u_h)_{\partial\cD} \nonumber\\
	&\quad = a[y_h^\gamma(u_h^*) - y_h^\gamma, p]_{\rho} - (\E[a_N\partial_n p], u_h^* - u)_{\partial\cD}.
\end{align}
The second term is kicked to the left-hand side.
For the third term in \eqref{pr2} we get with the triangle inequality and a Lipschitz-estimate for $y_h^\gamma$ \cite[Thm.~2.2]{ECasas_JPRaymond_2006b}
\begin{align}\label{eq:pr4}
	\|y-y_h^\gamma(u_h^*)\|_{L^2(\Gamma;L^2(\cD))}^2
	&\le 2\,\|y-y_h^\gamma(u)\|_{L^2(\Gamma;L^2(\cD))}^2 + 2\,\|y_h^\gamma(u) - y_h^\gamma(u_h^*)\|_{L^2(\Gamma;L^2(\cD))}^2 \nonumber\\
	&\le 2\,\|y-y_h^\gamma(u)\|_{L^2(\Gamma;L^2(\cD))}^2 + C\,\|u - u_h^*\|_{L^2(\partial\cD)}^2.	
\end{align}
The fourth term in \eqref{pr2} gives
\begin{align}\label{eq:pr5}
	&-\alpha(u_h, u-u_h^*)_{\partial\cD} = \alpha(u - u_h, u-u_h^*)_{\partial\cD} + \alpha(u, u_h^* - u)_{\partial\cD} \nonumber \\
	&\quad\le \frac\alpha2\,\|u-u_h\|_{L^2(\partial\cD)}^2
	+ \frac\alpha2\,\|u-u_h^*\|_{L^2(\partial\cD)}^2
	+ \alpha\,(u, u_h^*-u)_{\partial\cD}.
\end{align}
Insertion of \eqref{eq:pr3}, \eqref{eq:pr4}, and \eqref{eq:pr5} into \eqref{pr2} then leads to the estimate
\begin{align}\label{eq:pr6}
	&\alpha\|u-u_h\|_{L^2(\partial \cD)}^2 + \|y - y_h^{\gamma}\|_{\yL}^2 \nonumber\\
	&\quad \le C\,\Big(\|u - u_h^*\|_{L^2(\partial\cD)}^2 + \|y-y_h^\gamma(u)\|_{L^2(\Gamma;L^2(\cD))}^2 \nonumber \\
	&\qquad \qquad + a[y_h^\gamma(u_h^*) - y_h^\gamma, p]_{\rho} + (\alpha\,u - \E[a_N\,\partial_n p], u_h^* - u)_{\partial\cD}\Big).
\end{align}
The last term on the right-hand side vanishes due to the definition of the function $u_h^*$, see \eqref{eq:uhstar_in_varinequ}. Furthermore, with the definition of $S_h^\gamma$ and the relation $y_h^\gamma(u_h^*) - y_h^\gamma = S_h^\gamma(u_h^* - u_h)$ we may rewrite the third term in \eqref{eq:pr6} as
\begin{align*}
	& a[y_h^\gamma(u_h^*) - y_h^\gamma, p]_{\rho} \nonumber \\
    & \quad =	\sup_{w_h\in U_h \setminus\{0\}} \frac{|a[S_h^\gamma(w_h), p]_{\rho}|}{\|w_h\|_{L^2(\partial \cD)}}
	\left(\|u-u_h^*\|_{L^2(\partial \cD)} + \|u - u_h\|_{L^2(\partial \cD)}\right).
\end{align*}
Together with Young's inequality we arrive at the desired estimate.

\end{proof}

Next, we derive upper bounds for the three terms on the right-hand side of \eqref{eq:basic_error_estimate}.
\begin{lemma}\label{lem:projection_estimate}
	Let $y^d\in H^t(\cD)$ for some arbitrary $t>0$. The projection $u_h^*\in U_h^{\textup{ad}}$ from \eqref{eq:def_uhstar} fulfills the estimate
	\begin{equation}\label{eq:estimate_control_projection}
		\|u-u_h^*\|_{L^2(\partial\cD)} \le C \,  h^{1/2}.
                  \end{equation}
\end{lemma}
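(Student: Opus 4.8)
The plan is to exploit that, for the \emph{fixed} optimal quantity $d=\alpha\,u-\E[a_N\,\partial_n p]$, the assignment $u\mapsto u_h^*$ defined in \eqref{eq:def_uhstar} is a \emph{linear} quasi-interpolation operator $Q_h\colon L^2(\partial\cD)\to U_h$ that reproduces constants on each patch, and then to bound $\|(\mathrm{id}-Q_h)u\|_{L^2(\partial\cD)}$ by interpolating between an $L^2$-stability estimate and an $H^1$-approximation estimate. First I would record the regularity $u\in H^{1/2}(\partial\cD)$: by the bootstrapping \eqref{eq:bootstrapping} the adjoint satisfies $p\in L^2(\Gamma;H^1_0(\cD)\cap H^2(\cD))$, so $\partial_n p$ has an $H^{1/2}(\partial\cD)$-trace and $\E[a_N\,\partial_n p]\in H^{1/2}(\partial\cD)$ (using $a_N\in C^1$, and $y^d\in H^t$ to keep the right-hand side of the adjoint equation slightly better than $L^2$ near the corners); since the pointwise projection $\mathcal P_{U^{\textup{ad}}}$ is a Lipschitz Nemytskii map it preserves $H^{1/2}$, giving $u\in H^{1/2}(\partial\cD)$ with a bound depending only on the data.

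The two structural properties I would establish for $Q_h$ are local constant reproduction and local stability. Constant reproduction is immediate from both branches of \eqref{eq:def_uhstar}: if $u\equiv c$ on $S_n$ then $u_n^*=c$ in the averaging case $I_n=0$ and $u_n^*=c\,I_n/I_n=c$ in the weighted case $I_n\ne 0$. For stability I would show that $u_n^*$ always lies between the local infimum and supremum of $u$ over $S_n$; this is clear for the plain average, and in the weighted branch it holds because $u_n^*$ is a convex combination of values of $u$ with weights $d\,\phi_{\mathcal N_{\textup{I}}+n}/I_n$, provided the weight $d\,\phi_{\mathcal N_{\textup{I}}+n}$ keeps a fixed sign on $S_n$. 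That sign property is exactly what the construction of \cite{TApel_MMateos_JPfefferer_ARosch_2018,ECasas_JPRaymond_2006b} secures from the variational inequality \eqref{eq:opt_sysc}: $d=0$ on the inactive set, $d\ge 0$ where $u=u_a$, and $d\le 0$ where $u=u_b$. Consequently $Q_h$ is $L^\infty$- and hence, by the finite overlap of the patches $S_n$, $L^2(\partial\cD)$-stable.

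With these ingredients I would derive the two endpoint bounds on each boundary edge $E$ with patch $S_E$. Stability alone gives $\|(\mathrm{id}-Q_h)u\|_{L^2(E)}\le C\,\|u\|_{L^2(S_E)}$, so that summing over $E$ yields $\|(\mathrm{id}-Q_h)u\|_{L^2(\partial\cD)}\le C\,\|u\|_{L^2(\partial\cD)}$. Subtracting the $S_E$-average $c_E$ of $u$, using constant reproduction together with stability, and then a Poincar\'e estimate on $S_E$ gives the $H^1$-approximation bound $\|(\mathrm{id}-Q_h)u\|_{L^2(E)}\le C\,h_E\,|u|_{H^1(S_E)}$, whence $\|(\mathrm{id}-Q_h)u\|_{L^2(\partial\cD)}\le C\,h\,|u|_{H^1(\partial\cD)}$. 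Since $\mathrm{id}-Q_h$ is a \emph{linear} operator that is bounded $L^2(\partial\cD)\to L^2(\partial\cD)$ and $H^1(\partial\cD)\to L^2(\partial\cD)$ with norms $C$ and $C\,h$, respectively, real interpolation $[L^2(\partial\cD),H^1(\partial\cD)]_{1/2}=H^{1/2}(\partial\cD)$ yields $\|(\mathrm{id}-Q_h)u\|_{L^2(\partial\cD)}\le C\,h^{1/2}\,\|u\|_{H^{1/2}(\partial\cD)}$. Combined with the a priori bound on $\|u\|_{H^{1/2}(\partial\cD)}$ from the first step, this gives \eqref{eq:estimate_control_projection}.

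I expect the delicate point to be the local stability of the weighted branch, namely verifying that $d$ (hence $d\,\phi_{\mathcal N_{\textup{I}}+n}$) keeps a fixed sign on each patch $S_n$, so that $u_n^*$ is a genuine convex combination of values of $u$. This is where the sign structure of the variational inequality and the careful treatment of patches straddling the free boundary between the active and inactive sets enter, and it is precisely the content secured by the construction in \cite{TApel_MMateos_JPfefferer_ARosch_2018,ECasas_JPRaymond_2006b}. A secondary technical requirement is that the interpolation step needs $\mathrm{id}-Q_h$ to be linear, which is valid only because $d$ is held fixed at its optimal value; one could alternatively avoid interpolation by a direct Bramble--Hilbert argument with the fractional $H^{1/2}$-seminorm, at the cost of handling its nonlocal contributions across patches.
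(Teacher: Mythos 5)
Your route is genuinely different from the paper's: the paper first upgrades the regularity to $u\in H^{1/2+\varepsilon}(\partial\cD)$, $\varepsilon>0$, by bootstrapping together with \cite[Corollary 5.3]{TApel_MMateos_JPfefferer_ARosch_2015}, and then invokes \cite[Lemma 5.6]{TApel_MMateos_JPfefferer_ARosch_2018} wholesale, whereas you try to re-prove that approximation result by operator interpolation. Unfortunately your argument has a genuine gap at the step ``$Q_h$ is $L^\infty$- and hence, by the finite overlap of the patches $S_n$, $L^2(\partial\cD)$-stable.'' That implication is false for the weighted branch of \eqref{eq:def_uhstar}. Sign-definiteness of $d$ on $S_n$ does make $u_n^*$ a mean of $u$ with respect to the measure $d\,\phi_{\mathcal N_{\textup{I}}+n}\,\d s$, giving $\inf_{S_n}u\le u_n^*\le\sup_{S_n}u$; but as a functional on $L^2(S_n)$ its norm is $\|d\,\phi_{\mathcal N_{\textup{I}}+n}\|_{L^2(S_n)}/|I_n|$, and nothing forces this to be $O(|S_n|^{-1/2})$. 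The enemy is a patch clipped by the free boundary: there $d\equiv 0$ on the inactive part, and if the active sliver $A_n=S_n\cap\{d\neq 0\}$ has measure $\delta\ll h$, then testing with $v=\chi_{A_n}$ gives $v_n^*=1$ (all other coefficients being nonnegative), hence
\begin{equation*}
  \|(\mathrm{id}-Q_h)v\|_{L^2(\partial\cD)}\ \ge\ \Big(\textstyle\int_{S_n\setminus A_n}\phi_{\mathcal N_{\textup{I}}+n}^2\,\d s\Big)^{1/2}\ \ge\ c\,h^{1/2},
  \qquad \|v\|_{L^2(\partial\cD)}=\delta^{1/2},
\end{equation*}
so $\|\mathrm{id}-Q_h\|_{L^2\to L^2}\ge c\,(h/\delta)^{1/2}$ is not uniformly bounded; finite overlap of the patches is irrelevant to this. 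Real interpolation requires uniform operator bounds on \emph{both} endpoint spaces---it cannot be applied ``at the single function $u$''---so the step through $[L^2,H^1]_{1/2}=H^{1/2}$ collapses, and your $H^1$-endpoint bound (stability plus Poincar\'e) inherits the same defect. The natural repair, replacing the broken endpoint by boundedness of $\mathrm{id}-Q_h\colon H^{1/2+\varepsilon'}(\partial\cD)\to L^2(\partial\cD)$ (via $H^{1/2+\varepsilon'}\hookrightarrow L^\infty$ on the one-dimensional boundary and your $L^\infty$-stability), yields after interpolation the rate $h^{1/2}$ only under roughly $u\in H^{3/4}(\partial\cD)$, which is more regularity than the lemma assumes.

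A secondary problem is that you settle for $u\in H^{1/2}(\partial\cD)$, which on a one-dimensional boundary does not embed into $C^0$; yet your own sign argument (that for small $h$ no patch meets both $\{u=u_a\}$ and $\{u=u_b\}$) and the pointwise meaning of the active sets require continuity of $u$. This is exactly why the paper insists on $u\in H^{1/2+\varepsilon}(\partial\cD)\hookrightarrow C^{0,\varepsilon}(\partial\cD)$, deduced from $y^d\in H^t(\cD)$. If you want a self-contained proof rather than the paper's citation, the workable structure---which is that of \cite{ECasas_JPRaymond_2006b,TApel_MMateos_JPfefferer_ARosch_2018}---is to treat patches not meeting the active set with the plain (Cl\'ement) average, which \emph{is} uniformly $L^2$-stable, and to treat patches meeting the active set by pointwise oscillation estimates that use the H\"older continuity of $u$, the identity $u=u_a$ (or $u_b$) on the active part, and the sign structure of $d$; no $L^2$ operator bound for the weighted branch is ever needed there.
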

\begin{proof}
  The assumption on the desired state guarantees $u\in H^{1/2+\varepsilon}(\partial\cD)$ for some sufficiently small $\varepsilon>0$, which can be deduced with bootstrapping arguments like in \eqref{eq:bootstrapping} taking into account the regularity estimates from \cite[Corollary 5.3]{TApel_MMateos_JPfefferer_ARosch_2015}. The estimate \eqref{eq:estimate_control_projection} is then a consequence of \cite[Lemma 5.6]{TApel_MMateos_JPfefferer_ARosch_2018}.
\end{proof}
\begin{lemma}\label{lem:FE_error_state_eq}
  Suppose $u\in H^{1/2}(\partial\cD)$ and let $y$ solve \eqref{eq:opt_pa}. For the approximation $y_h^\gamma(u)$ from \eqref{eqn:aux2} there holds
  \begin{equation*}
    \|y-y_h^\gamma(u)\|_{L^2(\Gamma; L^2(\Omega))} 
    \le C\left(
      h 
      + \sqrt{\sum_{n=1}^N(q_n+2)\,\left(\frac{\gamma_n\,r_n}{2}\right)^{q_n+1}}
    \right),
  \end{equation*}
  with some $C>0$ depending only on the data $a_N, f_N, b_N, u$.
\end{lemma}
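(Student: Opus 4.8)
The plan is to separate the two sources of discretization error—the parametric (stochastic Galerkin) error and the purely spatial finite element error, the latter including the boundary projection $\Pi_h$—and to bound them independently. To this end I introduce the spatially continuous, stochastic Galerkin solution $\tilde y\in H^1(\cD)\otimes\mathcal S^\gamma$ defined by $\tilde y|_{\partial\cD}=u+b_N$ and $a[\tilde y,v]_\rho=[f_N,v]_\rho$ for all $v\in H_0^1(\cD)\otimes\mathcal S^\gamma$, and split
\[
  \|y-y_h^\gamma(u)\|_{\yL}\le \|y-\tilde y\|_{\yL}+\|\tilde y-y_h^\gamma(u)\|_{\yL},
\]
so that the first term carries the parametric error and the second the spatial one, since $y_h^\gamma(u)$ from \eqref{eqn:aux2} is precisely the finite element approximation of $\tilde y$ with boundary data $\Pi_h(u+b_N)$.

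For the parametric term I would first reduce the weak $L^2(\cD)$-in-space norm to the $H^1(\cD)$-in-space parametric projection error. Because $u$ is deterministic and $b_N$ is affine in $\xi$, the boundary data $u+b_N$ is reproduced exactly by $\Pi_\gamma$, so $e:=y-\tilde y$ has vanishing boundary trace and $\Pi_\gamma y-\tilde y\in H_0^1(\cD)\otimes\mathcal S^\gamma$ is an admissible test function for the Galerkin orthogonality $a[e,\cdot]_\rho=0$. Testing with $e$ and inserting this function gives $a[e,e]_\rho=a[e,y-\Pi_\gamma y]_\rho$, whence coercivity and boundedness of $a_N$ (Assumption~\ref{asm:a_coef}) together with the Poincar\'e inequality yield $\|y-\tilde y\|_{\yL}\le C\,\|(I-\Pi_\gamma)\nabla y\|_{\yL}$. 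I would then apply the box-wise estimate \eqref{est:parametric} to the components of $\nabla y$ and control the resulting parametric derivatives through Corollary~\ref{cor:state_regularity_wrt_stochastic}, which supplies $\|\partial_{\xi_n}^{q_n+1}\nabla y\|_{\yL}/(q_n+1)!\le C\,r_n^{q_n+1}$. Summing over the boxes $B_j^N$ and the $N$ coordinate directions—via a Cauchy–Schwarz inequality in $n$ and the elementary bound $(\gamma_n r_n/2)^{2(q_n+1)}\le(\gamma_n r_n/2)^{q_n+1}$, valid in the convergence regime $\gamma_n r_n<2$—is what produces the $(q_n+2)$ weights and the stated term $\sqrt{\sum_n(q_n+2)(\gamma_n r_n/2)^{q_n+1}}$.

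For the spatial term $\tilde y-y_h^\gamma(u)$, both functions carry the same finite parametric discretization $\mathcal S^\gamma$, so I would invoke the deterministic very weak Dirichlet estimates of \cite[Lemma~5.1]{ECasas_JPRaymond_2006b} and \cite[Theorem~2.3]{ECasas_JPRaymond_2006a}, applied fiberwise over the finite-dimensional stochastic space and exploiting the tensor-product structure together with the $H^s(\partial\cD)$-stability of $\Pi_h$, to obtain $\|\tilde y-y_h^\gamma(u)\|_{\yL}\le C\,h$ from $u\in H^{1/2}(\partial\cD)$ and the regularity $\tilde y\in\yH$; combining with the parametric bound gives the claim. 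I expect the main obstacle to be the parametric estimate: since the random coefficient $a_N$ couples the polynomial modes, no mode-by-mode argument is available, and an Aubin–Nitsche duality gain is blocked because the dual data lacks parametric smoothness—so the bound is genuinely worst-case and the exponent and $(q_n+2)$-dependence must be extracted by careful bookkeeping of \eqref{est:parametric} against Corollary~\ref{cor:state_regularity_wrt_stochastic}, complementing \cite{IBabuska_RTempone_GEZouraris_2004a}. A secondary technical point is the rigorous transfer of the deterministic finite element estimates, including the boundary projection $\Pi_h$, to the tensorized very weak setting.
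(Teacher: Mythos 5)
Your treatment of the parametric error is essentially correct, and it is a genuinely different route from the paper: the paper never introduces the semidiscrete solution $\tilde y$, whereas your C\'ea-type argument (testing the Galerkin orthogonality with $\Pi_\gamma y-\tilde y$, which is admissible because $u+b_N$ is affine in $\xi$ and hence reproduced by $\Pi_\gamma$, then using coercivity and Poincar\'e) combined with \eqref{est:parametric} and Corollary~\ref{cor:state_regularity_wrt_stochastic} gives the sharper bound $\|y-\tilde y\|_{\yL}\le C\sum_{n=1}^N\left(\gamma_n r_n/2\right)^{q_n+1}$, with no $(q_n+2)$ weight and no square root; you only reinsert these artificially to match the statement (which is legitimate, since the sum is bounded in the regime $\gamma_n r_n<2$ and the complementary regime is trivial by the stability bound \eqref{ineq:yh_Piu}).

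The genuine gap is in the spatial term. The claim $\|\tilde y-y_h^\gamma(u)\|_{\yL}\le C\,h$ cannot be obtained by applying the deterministic Casas--Raymond estimates ``fiberwise'': for a fixed $\xi$, the function $y_h^\gamma(u)(\cdot,\xi)$ is \emph{not} the finite element approximation of $\tilde y(\cdot,\xi)$, because the Galerkin relations in \eqref{eqn:aux2} hold only after integration over $\Gamma$, and the modal coefficient functions solve a coupled system since $a_N$ depends on $\xi$ --- exactly the coupling you yourself invoke to rule out mode-by-mode arguments. Moreover, since $u\in H^{1/2}(\partial\cD)$ yields only $\tilde y\in\yH$, the order $h$ in the $L^2(\cD)$-norm requires an Aubin--Nitsche argument, and running it in the tensor-product setting is precisely where the parametric error re-enters: with the parametrically continuous dual problem \eqref{eq:aux_adj} (the only one with a clean scalar $H^2$ shift theorem \eqref{eq:dual_prob_regularity}), the dual solution $z$ is not polynomial in $\xi$, so exploiting Galerkin orthogonality forces you to approximate $z$ in $\mathcal V_h^\gamma$, and its parametric approximation error must then be controlled via Lemma~\ref{lem:regularity_derivatives_xi} applied to the dual problem together with a parametric inverse inequality for the discrete datum --- which is exactly the mechanism of the paper's proof and the origin of the $(q_n+2)$ factors and the square root. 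The alternative, a parametrically semidiscrete dual problem posed in $H_0^1(\cD)\otimes\mathcal S^\gamma$, would keep the dual datum polynomial in $\xi$ but requires an $H^2(\cD)$ shift theorem, uniform in the parametric dimension, for the coupled elliptic system with matrix coefficient $\left[\int_\Gamma a_N\,\psi_i\,\psi_j\,\rho\,\d\xi\right]_{ij}$ on a convex polygon; this is nonstandard and established neither by you nor by the paper. In short, your decomposition does not actually decouple the two error sources: repairing the spatial half collapses back into the paper's single duality argument for $y-y_h^\gamma(u)$, rendering the splitting redundant.
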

\begin{proof}
The proof uses a duality argument. To this end, let $z\in L^2(\Gamma;H_0^1(\cD) \cap H^2(\cD))$ be the strong solution of
\begin{equation}
\begin{aligned} \label{eq:aux_adj}
	-\nabla\cdot(a_N \,\nabla z) &= y-y_h^\gamma(u) &\quad&\text{in}\ \Gamma\times \cD,\\
	z &= 0 && \text{on}\ \Gamma\times \partial\cD.
\end{aligned}
\end{equation}
The function $z$ fulfills the \emph{a priori} estimate
\begin{equation}
\label{eq:dual_prob_regularity}
	\|z(\cdot,\xi)\|_{H^2(\cD)} \le C\,\|(y-y_h^\gamma(u))(\cdot,\xi)\|_{L^2(\cD)}
\end{equation}
for almost all $\xi\in \Gamma$.
By using the definitions of $z$ from \eqref{eq:aux_adj} and $y$ from \eqref{eq:opt_pa}, the integration-by-parts formula, the fact that $b_N = \Pi_\gamma(b_N)$, and  the Galerkin orthogonality, we get for arbitrary $z_h^\gamma\in \mathcal V_h^\gamma$ 
\begin{align}\label{eq:FE_error_h1_splitting}
  &\|  y - y_h^{\gamma}(u) \|_{\yL}^2  \nonumber \\
  &\quad =a[y-y_h^\gamma(u), z-z_h^\gamma]_{\rho} - [a_N\partial_n z, u+b_N - \Pi_h(u+b_N)]_{\rho,\partial\cD}
    = E_1 + E_2.
\end{align}
To bound the term $E_2$, we use the orthogonality of $\Pi_h$, the Cauchy-Schwarz inequality, the inequalities \eqref{ineq:l2_projectiona} and \eqref{ineq:l2_projectionc}, and the regularity estimate \eqref{eq:dual_prob_regularity}, and infer
\begin{align}
\label{pr9}
  E_2 &= [u+b_N - \Pi_h(u+b_N), a_N\,\partial_n z - \Pi_h(a_N\,\partial_n z)]_{\rho,\partial\cD}
        \nonumber \\
      &\le C\,h\,\|u+b_N\|_{L^2(\Gamma;H^{1/2}(\partial\cD))}\, \|z\|_{L^2(\Gamma; H^2(\cD))}
        \nonumber \\
      &\le
        C\,h\,(\|u\|_{H^{1/2}(\partial\cD)} + \|b_N\|_{L^2(\Gamma; H^{1/2}(\partial\cD))})\,\|y-y_h^\gamma(u)\|_{L^2(\Gamma; L^2(\cD))}.
\end{align}

To deduce an estimate for the term $E_1$ on the right-hand side of \eqref{eq:FE_error_h1_splitting} 
we choose $z_h^\gamma = R_h(\Pi_\gamma(z))$ with $R_h$ defined in \eqref{ritz_def}, exploit the stability of $R_h$ in $H^1(\cD)$, and the estimates \eqref{ritz_ineq} and \eqref{est:parametric} to arrive at
\begin{equation}
  \label{eq:error_est_dual_solution}
  \|z-z_h^\gamma\|_{L^2(B_j^N; H^1(\cD))}
  \le C\left(h\,|z|_{L^2(B_j^N; H^2(\cD))} + \sum_{n=1}^N \left(\frac{\gamma_n}{2}\right)^{q_n+1} \frac{\|\partial_{\xi_n}^{q_n+1} z\|_{L^2(B_j^N; H^1(\cD))}}{(q_n+1)!}\right)
\end{equation}
for all $j\in J$.
Applying Lemma~\ref{lem:regularity_derivatives_xi} leads to
  \begin{equation*}
    \frac{\|\partial_{\xi_n}^{q_n+1} z\|_{L^2(B_j^N; H^1(\cD))}}{(q_n+1)!}
    \le
    C\,\sum_{i=0}^{q_n+1} \frac{1}{i!}\,r_n^{q_n+1-i}\,\|\partial_{\xi_n}^i (y-y_h^\gamma(u))\|_{L^2(B_j^N; L^2(\cD))}.
  \end{equation*}
  A further application of Corollary~\ref{cor:state_regularity_wrt_stochastic} gives  
  \begin{align*}
    \frac1{i!} \|\partial_{\xi_n}^i y\|_{L^2(B_j^N; L^2(\cD))}
    &\le
      C\,r_n^i\, \left(\|f_N\|_{L^2(B_j^N; L^2(\cD))} + \|u + b_N\|_{L^2(B_j^N; H^{1/2}(\partial\cD))}\right) \\
    &\quad +   C\,r_n^{i-1}\,\left(\|f_n\|_{L^2(B_j^N; L^2(\cD))} +
      \|b_n\|_{L^2(B_j^N; H^{1/2}(\partial\cD))}\right).
  \end{align*}
  With an inverse inequality we deduce furthermore
  \begin{align*}
    \frac1{i!} \|\partial_{\xi_n}^i y_h^\gamma(u)\|_{L^2(B_j^N; L^2(\cD))} &\le C\,\frac{1}{i!}\,(\gamma_n\,r_n)^{-i}\,r_n^i \,\|y_h^\gamma(u)\|_{L^2(B_j^N; L^2(\cD))} \\
    &\le C\,r_n^i\,\|y_h^\gamma(u)\|_{L^2(B_j^N; L^2(\cD))}
  \end{align*}
  and the previous three estimates then give
  \begin{align*}
    \frac{\|\partial_{\xi_n}^{q_n+1} z\|_{L^2(B_j^N; H^1(\cD))}}{(q_n+1)!}
    &\le C\,
    (q_n+2)\,r_n^{q_n+1}
    \Big(\|f_N\|_{L^2(B_j^N; L^2(\cD))} + \|u + b_N\|_{L^2(B_j^N;H^{1/2}(\partial\cD))} \\
    & \quad + \|f_n\|_{L^2(B_j^N; L^2(\cD))} + \|b_n\|_{L^2(B_j^N; H^{1/2}(\partial\cD))}
    + \|y_h^\gamma(u)\|_{L^2(B_j^N; L^2(\cD))}\Big).
  \end{align*}
  Insertion into \eqref{eq:error_est_dual_solution}, summation over all the boxes $B_j^N$, $j\in J$, and an application of \eqref{eq:dual_prob_regularity} then yield the following error estimate for the approximation of the dual solution
  \begin{equation}\label{eq:final_est_dual_solution}
    \|z-z_h^\gamma\|_{L^2(\Gamma; H^1(\cD))}
    \le C\left(h\,\|y-y_h^\gamma(u)\|_{L^2(\Gamma; L^2(\cD))}
    + \sum_{n=1}^N (q_n+2)\,\left(\frac{\gamma_n\,r_n}{2}\right)^{q_n+1}\,D\right)
  \end{equation}
  with
  \begin{align*}
    D&:= \|f_N\|_{L^2(\Gamma; L^2(\cD))} + \|u + b_N\|_{L^2(\Gamma;H^{1/2}(\partial\cD))} \\
    &\quad+ \|f_n\|_{L^2(\Gamma; L^2(\cD))} + \|b_n\|_{L^2(\Gamma; H^{1/2}(\partial\cD))}
    + \|y_h^\gamma(u)\|_{L^2(\Gamma; L^2(\cD))}.
  \end{align*}
  The last term in $D$ can be omitted as it is bounded by the data $f_N, f_n, u+b_N$, and $b_n$ as well, see \eqref{ineq:yh_Piu}. In the following we will hide the dependencies on the data in the generic constant $C$.

  Consider now the term $E_1$ in \eqref{eq:FE_error_h1_splitting}.
  An application of  the Cauchy-Schwarz inequality, the bound of $a_N$ in \eqref{assumption_truncatedD},  the triangle inequality, and the inequalities in \eqref{ineq:yh_Piu} and \eqref{eq:final_est_dual_solution} result in 
\begin{align}
\label{pr10}
  E_1
  & \leq  a_{\max} \, \big( \|y\|_{\yH} + \|y_h^{\gamma}(u)\|_{\yH} \big)   \|z - z_h^\gamma\|_{L^2(\Gamma; H^1(\cD))}  \nonumber \\ 
  & \leq   C\,\left(h\,\|y-y_h^\gamma(u)\|_{L^2(\Gamma; L^2(\cD))}
    + \sum_{n=1}^N (q_n+2)\,\left(\frac{\gamma_n\,r_n}{2}\right)^{q_n+1}\right).
\end{align}

Finally, combining \eqref{pr9}, \eqref{pr10}, and \eqref{eq:FE_error_h1_splitting}, and using Young's inequality yield
\begin{equation*}
  \|y-y_h^\gamma(u)\|_{L^2(\Gamma; L^2(\cD))}^2 \le C\left(h^2 
  + \sum_{n=1}^N(q_n+2)\,\left(\frac{\gamma_n\,r_n}{2}\right)^{q_n+1}\right).
\end{equation*}
The assertion now follows after extracting the root.
\end{proof}

\begin{lemma}
\label{lem:estimate_normal_derivatives}
Let $p\in L^2(\Gamma; H^1_0(\cD) \cap H^2(\cD))$ be the adjoint state from \eqref{eq:opt_pb} and let $w_h\in U_h$ be arbitrary. Then, the following error estimate holds
\begin{align*}
	a[S_h^\gamma (w_h), p]_\rho
	\le C\,\left(h^{1/2} +  h^{-1/2}\sum \limits_{n=1}^N (q_n+2)\left(\frac{\gamma_n\,r_n}{2}\right)^{q_n+1}\right)\|w_h\|_{L^2(\partial\cD)}.
\end{align*}
\end{lemma}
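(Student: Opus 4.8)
The plan is to exploit the Galerkin orthogonality satisfied by the discrete extension. By definition $\widetilde y_h^\gamma = S_h^\gamma(w_h)$ fulfils $a[\widetilde y_h^\gamma, v_h^\gamma]_\rho = 0$ for every $v_h^\gamma \in \mathcal V_h^\gamma$, and every element of $\mathcal V_h^\gamma$ vanishes on $\partial\cD$ just as $p$ does. Hence I would first insert an arbitrary $p_h^\gamma \in \mathcal V_h^\gamma$ to write
\begin{equation*}
  a[S_h^\gamma(w_h), p]_\rho = a[S_h^\gamma(w_h), p - p_h^\gamma]_\rho,
\end{equation*}
and bound the right-hand side through boundedness of the bilinear form,
\begin{equation*}
  |a[S_h^\gamma(w_h), p]_\rho| \le a_{\max}\,\|S_h^\gamma(w_h)\|_{\yH}\,\|p - p_h^\gamma\|_{\yH}.
\end{equation*}
This splits the task into an inverse-type bound for the extension and an approximation bound for the adjoint state.

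For the first factor I would note that $S_h^\gamma(w_h)$ coincides with the discrete state $y_h^\gamma$ of \eqref{eqn:aux2} for the data $f_N \equiv 0$, $b_N \equiv 0$ and already discrete boundary value $w_h$, so that $\Pi_h(w_h)=w_h$. The inverse estimate \eqref{ineq:yh_Piu_inv} then yields at once
\begin{equation*}
  \|S_h^\gamma(w_h)\|_{\yH} \le C\,h^{-1/2}\,\|w_h\|_{L^2(\partial\cD)},
\end{equation*}
which supplies the decisive factor $h^{-1/2}$.

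For the second factor I would take $p_h^\gamma = R_h(\Pi_\gamma(p))$ and reuse the approximation analysis from the proof of Lemma~\ref{lem:FE_error_state_eq}. The adjoint state $p$ solves the boundary value problem \eqref{eq:general_bvp} with homogeneous data $g_N \equiv 0$ and right-hand side $F = y - y^d$, so the local estimate \eqref{eq:error_est_dual_solution} holds verbatim with $z$ replaced by $p$. Since $y^d$ is deterministic, $\partial_{\xi_n}^i F = \partial_{\xi_n}^i y$ for $i \ge 1$, and Lemma~\ref{lem:regularity_derivatives_xi} together with Corollary~\ref{cor:state_regularity_wrt_stochastic} bounds $\tfrac1{i!}\|\partial_{\xi_n}^i F\|_{L^2(\cD)} \le C\,r_n^i$ for $i\ge 1$, while the single term $i=0$ contributes $\|y-y^d\|_{L^2(\cD)}$; counting the $q_n+2$ summands exactly as in Lemma~\ref{lem:FE_error_state_eq} gives
\begin{equation*}
  \frac{\|\partial_{\xi_n}^{q_n+1} p\|_{\yH}}{(q_n+1)!} \le C\,(q_n+2)\,r_n^{q_n+1}.
\end{equation*}
Combined with the spatial Ritz estimate, summation over the boxes $B_j^N$, and the elliptic regularity $\|p\|_{L^2(\Gamma;H^2(\cD))}\le C\,\|y-y^d\|_{\yL}$, this produces
\begin{equation*}
  \|p - R_h(\Pi_\gamma(p))\|_{\yH} \le C\left(h + \sum_{n=1}^N (q_n+2)\left(\frac{\gamma_n\,r_n}{2}\right)^{q_n+1}\right).
\end{equation*}
Multiplying the two factors then gives the asserted bound.

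The routine content is the reuse of these two earlier estimates. The points that require care are, first, that $p_h^\gamma$ must range over the test space $\mathcal V_h^\gamma$ — whose functions vanish on $\partial\cD$ — for the orthogonality step to be admissible, which is guaranteed because $R_h$ maps into $V_h$ and $\Pi_\gamma$ preserves membership in $\mathcal S^\gamma$; and second, that the derivative bound for $p$ separates cleanly into the $i\ge 1$ contributions governed by Corollary~\ref{cor:state_regularity_wrt_stochastic} and the single $i=0$ term $y-y^d$, which is not covered by the corollary and must be absorbed separately. I expect the main obstacle to be this second bookkeeping step, since it is exactly where the product structure $r_n^{q_n+1-i}\cdot r_n^i = r_n^{q_n+1}$ and the counting of $q_n+2$ terms must combine to preserve the sharp scaling of the stochastic error.
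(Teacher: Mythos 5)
Your proposal is correct and follows essentially the same route as the paper's proof: Galerkin orthogonality of $S_h^\gamma(w_h)$ against the test function $R_h(\Pi_\gamma(p))\in\mathcal V_h^\gamma$, the inverse estimate \eqref{ineq:yh_Piu_inv} with $f_N=b_N=0$ to extract the factor $h^{-1/2}$, and the combination of the Ritz and parametric projection estimates with Lemma~\ref{lem:regularity_derivatives_xi} and Corollary~\ref{cor:state_regularity_wrt_stochastic} (applied to the adjoint equation with $F=y-y^d$, $g_N=0$) to obtain the bound $C\,(q_n+2)\,r_n^{q_n+1}$ on the stochastic derivatives of $p$. Your explicit separation of the $i=0$ term $\|y-y^d\|_{L^2(\cD)}$ from the $i\ge 1$ contributions is a minor bookkeeping refinement of what the paper does implicitly, not a different argument.
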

\begin{proof}
  The definition of $S_h^\gamma$, the boundedness of the bilinear form $a[\cdot, \cdot]_{\rho}$, the inequality \eqref{ritz_ineq}, and the estimate \eqref{eq:error_est_dual_solution} give
\begin{align}\label{eq:adjoint_error_proof_1}
a[S_h^\gamma (w_h), p]_\rho  & = a[S_h^\gamma (w_h), p - R_h(p)  + R_h((I - \Pi_{\gamma})p)]_\rho\nonumber \\
&\le C\,\|S_h^\gamma w_h\|_{L^2(\Gamma; H^1(\cD))}\,
\left(h\,\|p\|_{L^2(\Gamma; H^2(\cD))} + \sum_{n=1}^N \left(\frac{\gamma_n}{2}\right)^{q_n+1}\,\frac{\|\partial_{\xi_n}^{q_n+1}p\|_{L^2(\Gamma;H^1(\cD))}}{(q_n+1)!} \right).
\end{align}
To bound the norm involving $S_h^\gamma (w_h)$ we apply the estimate \eqref{ineq:yh_Piu_inv} (note that $b_N=0$ and $f_N=0$) with the stability of $\Pi_h$ which yields
\begin{equation}
  \label{eq:bound_Shgamma_by_discrete_bd_data}
  \|S_h^\gamma (w_h)\|_{H^1(\cD)} \le C\,h^{-1/2}\,\|w_h\|_{L^2(\partial\cD)}.
\end{equation}
By a standard shift theorem we obtain
\begin{equation}
  \label{eq:shift_theorem_adjoint_state}
  \|p\|_{L^2(\Gamma;H^2(\cD))}\le C\,\|y-y^d\|_{L^2(\Gamma; L^2(\cD))}\le C.
\end{equation}
To bound the term involving the derivatives $\partial_{\xi_n}^{q_n+1} p$ we use Lemma~\ref{lem:regularity_derivatives_xi} to get
\begin{equation*}
  \frac{\|\partial_{\xi_n}^{q_n+1}p\|_{L^2(\Gamma;H^1(\cD))}}{(q_n+1)!}
  \le
  C\,\sum_{j=0}^{q_n+1} \frac1{j!}\,r_n^{q_n+1-j}\,\|\partial_{\xi_n}^j(y-y^d)\|_{L^2(\Gamma; L^2(\cD))}.
\end{equation*}
The desired state $y^d$ is deterministic, so $\partial_{\xi_n} y^d \equiv 0$ for all $n=1,\ldots,N$. Furthermore, we use Corollary~\ref{cor:state_regularity_wrt_stochastic} to deduce
\begin{equation*}
  \frac1{j!}\,\|\partial_{\xi_n}^j(y-y^d)\|_{L^2(\Gamma; L^2(\cD))}
  \le C\,r_n^{j}
\end{equation*}
and thus,
\begin{equation}\label{eq:adjoint_state_stochastic_derivaties}
  \frac{\|\partial_{\xi_n}^{q_n+1}p\|_{L^2(\Gamma;H^1(\cD))}}{(q_n+1)!}
  \le C\,\sum_{j=0}^{q_n+1} r_n^{q_n+1} = C\,(q_n+2)\,r_n^{q_n+1}.
\end{equation}

Insertion of \eqref{eq:bound_Shgamma_by_discrete_bd_data}, \eqref{eq:shift_theorem_adjoint_state} and \eqref{eq:adjoint_state_stochastic_derivaties} into \eqref{eq:adjoint_error_proof_1} then yields
\begin{equation*}
  a[S_h^\gamma (w_h), p]_\rho \le C\,h^{-1/2}\,\|w_h\|_{L^2(\partial\cD)} \left(h
    + \sum_{n=1}^N (q_n+2)\,\left(\frac{\gamma_n\,r_n}{2}\right)^{q_n+1}\right).
\end{equation*}
\end{proof}

We are now in the position to state the main result of this section:
\begin{theorem}
\label{thm:final_estimate}
Suppose $y^d\in H^t(\cD)$ for some $t>0$.
Let $(y,u,p)\in L^2(\Gamma;H^1(\cD))\times U^{\textup{ad}} \times L^2(\Gamma;H^2(\cD)\cap H^1_0(\cD))$  be the solution triplet of \eqref{eq:opt_p} and denote by $(y_h^\gamma,u_h,p_h^\gamma)\in \mathcal Y_h^\gamma\times U_h^{\textup{ad}} \times \mathcal V_h^\gamma$ their approximation from \eqref{eq:opt_ph}. The following error estimate is valid:
\begin{align*}
  \|u-u_h\|_{L^2(\partial\cD)} + \|y-y_h^\gamma\|_{L^2(\Gamma;L^2(\cD))}\le C\left(h^{1/2}
  +  h^{-1/2}\sum \limits_{n=1}^N \left(\frac{\gamma_n\,r_n}{2}\right)^{q_n+1}
  \right).
\end{align*}
\end{theorem}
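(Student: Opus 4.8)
The plan is to assemble the asserted bound directly from the general a priori estimate of Theorem~\ref{thm:general_estimate} together with the three term-by-term estimates in Lemmas~\ref{lem:projection_estimate}--\ref{lem:estimate_normal_derivatives}. Since $\alpha>0$ is a fixed regularization parameter, the factor $\sqrt\alpha$ in front of $\|u-u_h\|_{L^2(\partial\cD)}$ in \eqref{eq:basic_error_estimate} may be absorbed into the generic constant, so it suffices to bound the three quantities on the right-hand side of \eqref{eq:basic_error_estimate} and to combine them.

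First I would insert Lemma~\ref{lem:projection_estimate} to control the projection error $\|u-u_h^*\|_{L^2(\partial\cD)}\le C\,h^{1/2}$, and Lemma~\ref{lem:estimate_normal_derivatives} to bound the supremum term: dividing that estimate by $\|w_h\|_{L^2(\partial\cD)}$ and taking the supremum over $w_h\in U_h\setminus\{0\}$ already yields exactly the target shape $C\bigl(h^{1/2}+h^{-1/2}\sum_n(q_n+2)(\gamma_n r_n/2)^{q_n+1}\bigr)$. The remaining state-equation error $\|y-y_h^\gamma(u)\|_{\yL}$ is handled by Lemma~\ref{lem:FE_error_state_eq}.

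The only genuinely nontrivial manipulation is to reconcile the square-root expression $\sqrt{\sum_n(q_n+2)(\gamma_n r_n/2)^{q_n+1}}$ from Lemma~\ref{lem:FE_error_state_eq} with the linear form of the sum in the statement. Writing $\Sigma:=\sum_{n=1}^N(q_n+2)(\gamma_n r_n/2)^{q_n+1}$, I would use the elementary inequality $\sqrt\Sigma=\sqrt{h^{1/2}}\,\sqrt{h^{-1/2}\Sigma}\le \tfrac12\bigl(h^{1/2}+h^{-1/2}\Sigma\bigr)$ (Young's inequality with exponents $2,2$), which converts the square root into precisely the two terms the theorem allows. Likewise the bare $h$ appearing in that lemma is dominated by $h^{1/2}$ for $h\le 1$. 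After these substitutions every contribution is of the form $C\bigl(h^{1/2}+h^{-1/2}\Sigma\bigr)$, and summing the three yields the same shape.

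Finally I would absorb the slowly growing polynomial factors $(q_n+2)$ into the generic constant (equivalently, for fixed polynomial degrees $q=(q_1,\ldots,q_N)$ they are bounded constants), replacing $\Sigma$ by $\sum_n(\gamma_n r_n/2)^{q_n+1}$ to reach the stated estimate. I expect no serious obstacle—the work is essentially bookkeeping—the one point requiring care is the Young-type step above: the naive bound $\sqrt\Sigma\le\sum_n\sqrt{q_n+2}\,(\gamma_n r_n/2)^{(q_n+1)/2}$ would halve the exponents and destroy the sharp convergence rate, whereas the multiplicative split against the powers of $h$ preserves the correct exponential order in the stochastic mesh sizes $\gamma_n$.
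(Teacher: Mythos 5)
Your proposal is correct and follows essentially the same route as the paper: the paper's proof likewise consists of inserting Lemmata~\ref{lem:projection_estimate}--\ref{lem:estimate_normal_derivatives} into Theorem~\ref{thm:general_estimate} and applying Young's inequality to the square-root term from Lemma~\ref{lem:FE_error_state_eq}, which is exactly your multiplicative split $\sqrt{\Sigma}\le\tfrac12\bigl(h^{1/2}+h^{-1/2}\Sigma\bigr)$. Your handling of the $(q_n+2)$ factors is also consistent with what the paper does implicitly (it drops them without comment), so there is no discrepancy to flag.
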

\begin{proof}
	The estimate follows after insertion of Lemmata~\ref{lem:projection_estimate}--\ref{lem:estimate_normal_derivatives} into Theorem~\ref{thm:general_estimate}. In addition, apply Young's inequality to the right-hand side of the estimate from Lemma~\ref{lem:FE_error_state_eq}.
\end{proof}

The result of Theorem~\ref{thm:final_estimate} is a worst-case estimate for the case that the $\cD$ is a convex polygonal domain. By exploiting the knowledge on the singularities at corners of the domain one can even show better convergence results. As this would exceed the scope of this paper we only comment on possible improvements:
\begin{remark}
  \begin{enumerate}[label=\alph*)]
  \item A more precise version of Lemma~\ref{lem:projection_estimate} can be found in \cite[Lemma 5.6]{TApel_MMateos_JPfefferer_ARosch_2018}. If $\cD$ is a convex polygonal domain, with $\omega$ denoting the largest opening angle of the corners, there holds the estimate
    \begin{equation*}
      \|u-u_h^*\|_{L^2(\partial\cD)} \le C\,h^s,\quad s < \min\{3/2,\pi/\omega-1/2\}.
    \end{equation*}
    To establish this result, the assumption $y^d\in H^t(\cD)$ for all $t<\min\{1,\pi/\omega-1\}$ is necessary. In case of a smooth domain and sufficiently regular data, one even achieves a convergence rate of almost $3/2$.
  \item The result of Lemma~\ref{lem:FE_error_state_eq} can also be improved taking into account higher regularity of the optimal state. Following known results for the deterministic setting from \cite{TApel_MMateos_JPfefferer_ARosch_2015}, one can show $u\in H^{s}(\partial\cD)$ and $y\in L^2(\Gamma;H^{s+1/2}(\cD))$ with $s<\min\{3/2,\pi/\omega-1/2\}$ in case of convex and polygonal $\cD$. This allows to apply an improved error estimate for the $L^2$-projection of $u+b_N$ in \eqref{pr9}, leading to $E_2\le C\,h^{s+1/2}$, and an improved estimate for $\|y-y_h^\gamma(u)\|_{L^2(\Gamma;H^1(\cD))}$ in \eqref{pr10} leading to $E_1\le C\,h^{s+1/2} + c(q)$.
  \item Further improvements in Lemma~\ref{lem:estimate_normal_derivatives} are also possible, but require very advanced techniques in the proofs. In the simplest case, assume that $p\in L^2(\Gamma;W^{2,\infty}(\cD))$ holds, which would follow, e.g., under the quite restrictive assumption $\omega < \pi/2$. In this case, instead of \eqref{eq:adjoint_error_proof_1}, we can estimate as follows:
    \begin{align*}
      a[S_h^\gamma(w_h),p]_\rho
      &= a[\widetilde S_h^\gamma(w_h), p-p_h^\gamma]
      \\
      &\le C\,\|\widetilde S_h^\gamma(w_h)\|_{W^{1,1}(\cD)}\,\|p-p_h^\gamma\|_{L^2(\Gamma;W^{1,\infty}(\cD))}
    \end{align*}
    with $p_h^\gamma\in \mathcal V_h^\gamma$ satisfying $a[v_h^\gamma,p-p_h^\gamma]=0$ for all $v_h^\gamma\in \mathcal V_h^\gamma$ and $\widetilde S_h^\gamma$ the zero extension, mapping $w_h\in U_h$ to that function from $\mathcal Y_h^\gamma$ vanishing in the interior nodes. It remains to use the estimate $\|\widetilde S_h^\gamma(w_h)\|_{W^{1,1}(\cD)}\le C\,\|w_h\|_{L^1(\partial\cD)} \le C\,\|w_h\|_{L^2(\partial\cD)}$ (see \cite[Lemma 3.3]{SMay_RRannacher_BVexler_2013}) and the $W^{1,\infty}(\cD)$-error estimate $\|p-R_hp\|_{W^{1,\infty}(\cD)} \le C\,h\,|p|_{W^{2,\infty}(\cD)}$ (see \cite{RRannacher_RScott_1982}) to conclude
    \begin{equation*}
      a[S_h^\gamma (w_h), p]_\rho
      \le C\,\left(h +  \sum \limits_{n=1}^N \left(\frac{\gamma_n\,r_n}{2}\right)^{q_n+1}\right)\|w_h\|_{L^2(\partial\cD)}.
    \end{equation*}
    In the case $\omega \in (\pi/2,\pi)$ we refer to \cite{TApel_MMateos_JPfefferer_ARosch_2018,JPfefferer_MWinkler_2019} where sharp discretization error estimates with respect to $h$ are proved in the deterministic setting. An extension to the stochastic setting is not straight-forward and might be subject of future research.
  \end{enumerate}
\end{remark}

\section{Numerical solution}\label{sec:linear_system} 

In this section, we first provide the matrix formulation of the discretized optimality system by employing a ``discretize-then-optimize'' approach; see, e.g., \cite{FTroeltzsch_2010a}, for both unconstrained and constrained problems. Then, we introduce suitable preconditioners to solve the corresponding linear systems.

\subsection{Unconstrained problem}

By following the Cameron--Martin theorem \cite{RHCameron_WTMartin_1947a}, state $y_h^\gamma(\boldsymbol{x},\xi)$ and  adjoint state $p_h^\gamma(\boldsymbol{x},\xi)$ on the finite parametric domain $\Gamma \subset \mathbb{R}^N$ 
are characterized by a finite generalized polynomial chaos (gPC) approximation
\begin{equation}
		y_h^\gamma(\boldsymbol{x},\xi) = \sum \limits_{i=1}^{N_{\xi}} y_{i,h}(\boldsymbol{x})\,\psi_i(\xi), \qquad 
		p_h^\gamma(\boldsymbol{x},\xi) = \sum \limits_{i=1}^{N_{\xi}} p_{i,h}(\boldsymbol{x})\,\psi_i(\xi),
\end{equation}
where the finite element functions $y_{i,h}\in Y_h$ and $p_{i,h}(\boldsymbol{x})\in V_h$ are the deterministic modes of the expansion and $N_{\xi}$ is the number of total generalized polynomial chaos  basis, given by
\begin{equation*}	
	N_{\xi} = 1 + \sum \limits_{s=1}^{Q} \frac{1}{s!} \prod \limits_{j=0}^{s-1} (N + j) = \frac{(N+Q)!}{N!Q!},
\end{equation*}
with $ N $ the dimension of the random vector $\xi$   and  $ Q $  the highest order in the stochastic (parametric) basis set.

The coefficients of the finite element functions with respect to the standard basis are $\boldsymbol{y}_i^{0} = (y^{0}_{i,1},\ldots,y^{0}_{i,\N_{\textup{I}}})^\top$ for $i=1,\ldots,N_{\xi}$, and for problems with inhomogeneous Dirichlet data $\boldsymbol{u} = (u_1,\ldots,u_{\N_\textup{B}})^\top$. The vectors of unknowns used to represent the interior and boundary values of the state $y_h^\gamma$ are then $\boldsymbol y^0 = \text{vec}[\boldsymbol{y}_1^{0},\ldots,\boldsymbol{y}^0_{N_{\xi}}]$ and  $\boldsymbol{u}$, respectively.
As basis for the stochastic space $\mathcal S^\gamma$ we choose the orthonormal polynomials $\{\psi_i\}_{i=1}^{N_\xi}$ satisfying 
\begin{equation*}
  \int_\Gamma \psi_i(\xi)\,\psi_j(\xi)\,\rho(\xi)\,\d\xi=\delta_{i,j},\ i,j=1,\ldots,N_{\xi},
\end{equation*}
that can be constructed from univariate orthonormal polynomials, see \cite{OGErnst_EUllmann_2010}. By construction, the first basis functions are
\begin{equation*}
	\psi_1(\xi) = 1,\quad \psi_{j+1}(\xi) = \xi_j,\ j=0,\ldots,N_{\xi}.
\end{equation*}
Recall the KL expansions from \eqref{eqn:kltrun} for $a(\boldsymbol{x},\omega)$, $b(\boldsymbol{x},\omega)$, and $f(\boldsymbol{x},\omega)$, with the notation $a_k = \kappa_a \, \sqrt{\lambda_k}\,w_k(\boldsymbol{x})$, $b_k = \kappa_b \, \sqrt{\lambda_k}\,w_k(\boldsymbol{x})$, and $f_k = \kappa_f \, \sqrt{\lambda_k}\,w_k(\boldsymbol{x})$, respectively, for $k=1, \dots, N$. Then, we define a fully-discrete approximation of $b$ by taking its KL expansion and by projecting the coefficient functions $b_j(\boldsymbol x)$ into  the space $U_h$. This gives
\begin{align*}
	b_{N,h} := \Pi_h (b_N) = \sum_{j=1}^N \Pi_h(b_{j})\,\xi_j 
	 = \sum_{j=1}^N \sum_{n=1}^{\mathcal N_B} b_{j,n}\,\phi_{\mathcal N_I+n}\,\xi_j 
\end{align*}
with coefficients $b_{j,n}\in\R$ for $j=1,\ldots,N$, $n=1,\ldots,\N_{\textup{B}}$.
We summarize these coefficients into vectors $[\boldsymbol b_k]_n = b_{k,n}$, $n=1,\ldots,\N_B$, and define furthermore $\boldsymbol b = \text{vec}[\boldsymbol b_1,\ldots, \boldsymbol b_N]$.

The coefficients introduced above allow the following representation of the state variable
\begin{align}\label{eq:ansatz_discrete_state}
  y_h^\gamma(\bx,\xi)
	&=
  \sum_{n=1}^{\N_{\text{I}}}\sum_{j=1}^{N_{\xi}}  y_{j,n}^0\,\psi_j(\xi)\,\phi_n(\boldsymbol{x})
  + \sum_{n=1}^{\N_{\text{B}}} \left(u_n\, 
  + \sum_{j=1}^N  b_{j,n}\,\xi_j\right)\phi_{\N_{\textup{I}}+n}(\bx).
\end{align}
For each test function $v=\psi_i\,\phi_m$,  $m=1,\ldots,\N_{\textup{I}}$, $i=1,\ldots,N_{\xi}$, we then obtain from \eqref{eqn:vari_disc} the system of linear equations
\begin{align}
  \label{eq:state_eq_ansatz}
 &  \sum_{j=1}^{N_{\xi}}\sum_{n=1}^{\N_{\textup{I}}}\Bigg(\left(\int_\Gamma\psi_j\,\psi_i\,\rho\,\d \xi\right)\left(\int_\cD \overline a\,\nabla\phi_n\cdot\nabla \phi_m\,\d \boldsymbol{x}\right) \nonumber\\
 & \qquad + \sum_{k=1}^N \left(\int_\Gamma\xi_k\,\psi_j\,\psi_i\,\rho\,\d \xi\right)\left(\int_\cD a_k\,\nabla\phi_n\cdot\nabla\phi_m\,\d \boldsymbol{x}\right)\Bigg) y_{j,n}^{0}\nonumber\\
 & +   \sum_{n=1}^{\N_{\textup{B}}}\Bigg(\left(\int_\Gamma\psi_i\,\rho\,\d \xi\right)\left(\int_\cD \overline a\,\nabla\phi_{\N_{\textup{I}+n}}\cdot\nabla \phi_m\,\d \boldsymbol{x}\right) \nonumber\\
 & \qquad  + \sum_{k=1}^N \left(\int_\Gamma\xi_k\,\psi_i\,\rho\,\d \xi\right)\left(\int_\cD a_k\,\nabla\phi_{\N_{\textup{I}+n}} \cdot\nabla\phi_m\,\d \boldsymbol{x}\right)\Bigg) u_n \nonumber\\
 & =\left(\int_\Gamma \psi_i\,\rho\,\d \xi\right) \left(\int_{\cD} \overline f\,\phi_m\,\d \boldsymbol{x}\right)+
     \sum_{k=1}^N \left(\int_\Gamma \xi_k\,\psi_i\,\rho\,\d \xi\right) \left(\int_{\cD} f_k\,\phi_m\,\d \boldsymbol{x}\right) \nonumber\\
 & \qquad
 - \sum_{l=1}^N\sum_{n=1}^{\mathcal{N}_B} \Bigg( \left(\int_\Gamma \xi_l\,\psi_i\,\rho\,\d \xi\right) \left(\int_{\cD} \bar a\,\nabla\phi_{\N_{\textup{I}} +n}\cdot\nabla\phi_m \,\d \boldsymbol{x}\right) \nonumber \\
 &\qquad\qquad + \sum_{k=1}^N \left(\int_{\Gamma} \xi_k\,\xi_l\,\psi_i\,\rho\d \xi\right) \left(\int_{\cD} a_k\,\nabla\phi_{\N_{\textup{I}} +n}\cdot\nabla\phi_m\,\d \bx\right) \Bigg)b_{l,n}.
\end{align}
The integrals in \eqref{eq:state_eq_ansatz} motivate the definition of the matrices
\begin{align*}  
  [\overline P]_{i,j} &= \int_\Gamma\psi_j\,\psi_i\,\rho \, \d \xi,
  & [P_k]_{i,j} &= \int_\Gamma\xi_k\,\psi_j\,\psi_i\,\rho\,\d \xi,   &   i,j&=1,\ldots,N_{\xi},\, k=1,\ldots,N, \\
  [\overline K]_{m,n} &= \int_{\cD} \overline a\,\nabla\phi_n\cdot\nabla\phi_m\,\d \boldsymbol{x},
  &[K_k]_{m,n} &= \int_{\cD} a_k\,\nabla \phi_n\cdot \nabla \phi_m\,\d \boldsymbol{x}, &m,n&=1,\ldots,\N,\, k=1,\ldots,N, \\
  [M]_{m,n} &= \int_{\cD} \phi_n\, \phi_m\,\d \boldsymbol{x}, & [M^\partial]_{r,s} &= \int_{\partial\cD} \phi_s\,\phi_r\,\d s_{\boldsymbol{x}}, &m,n&=1,\ldots,\N, \, r,s=1, \ldots, \N_{\textup{B}},\\
\intertext{and vectors}
  [\overline p]_i &= \int_\Gamma\psi_i\,\rho\,\d \xi,
  & [p_k]_i &= \int_\Gamma\xi_k\,\psi_i\,\rho\,\d \xi, &   i&=1,\ldots,N_{\xi},\, k=1,\ldots,N, \\
  [\underline{\overline{f}}]_m &= \int_\cD \overline f\,\phi_m \,\d \boldsymbol{x},& 
  [\underline{f}_k]_m &= \int_\cD f_k\,\phi_m\,\d \boldsymbol{x},  &m&=1,\ldots,\N,\, k=1,\ldots,N,  \\
  [\underline{y}^d]_m &= \int_\cD y^d\, \phi_m\,\d \boldsymbol{x},&&  &m&=1,\ldots,\N. 
\end{align*}
The choice of basis functions, i.e., Legendre polynomials,  implies $\overline P = I$, $\overline p = e_1$, and $p_k = e_{k+1}$. Furthermore, for some matrix $P$ we denote by $P_{R \times C}$, $R,C\in \mathbb N$, the submatrices containing the first $R$ rows and first $C$ columns of $P$.

Due to the numbering of the degrees of freedom, we get a special structure for the finite element matrices and vectors according to 
\begin{equation*}
  K = \begin{bmatrix}
    K_{\textup{II}} & K_{\textup{IB}}
    \\
    K_{\textup{BI}} & K_{\textup{BB}}
  \end{bmatrix},
  \qquad
  \underline f = \begin{bmatrix}
    \underline f_{\textup{I}} \\
   \underline f_{\textup{B}}
  \end{bmatrix}.
\end{equation*}
Here, the first indices $\textup{I}$ and $\textup{B}$ indicate that the integration is performed with respect to the test functions related to the inner vertices and the boundary vertices, respectively, whereas the second index means that the function to be integrated is a linear combination of basis functions related to the inner and boundary vertices, respectively.  

We may now introduce the stiffness and mass matrices obtained by the stochastic Galerkin method 
\begin{subequations}
    \label{eq:kronecker_matrices}
\begin{align}
  \mathcal K_{\textup{II}} &= \overline P\otimes \overline K_{\textup{II}} + \sum_{k=1}^N P_k \otimes K_{k,\textup{II}},
\label{eq:kronecker_matrices_II}\\
  \cK_{\textup{IB}} &= \overline P_{N_{\xi}\times N+1} \otimes \overline K_{\textup{IB}}
                      + \sum_{k=1}^N P_{k,N_{\xi}\times N+1} \otimes K_{k,\textup{IB}} = \mathcal{K}_{\textup{BI}}^\top,\\  
  \cM_{\textup{II}} &= \overline P\otimes M_{\textup{II}}, \\
  \cM_{\textup{IB}} &= \overline P_{N_{\xi}\times N+1}\otimes M_{\text{IB}}=\cM_{\textup{BI}}^\top,
\end{align}
\end{subequations}
and  the assembled vectors,
\begin{subequations}
\begin{align}
  \label{eq:rhs_assembled}
  \boldsymbol{f}_{\textup{I}} &= \overline p \otimes \underline{\overline{f}}_\textup{I} + \sum_{k=1}^N p_k\otimes \underline{f}_{k,\textup{I}},
  \\
  \boldsymbol{y}^d_{\textup{I}} &= \overline p\otimes \underline{y}^d_\textup{I} ,\quad \boldsymbol{y}^d_{\textup{B}} = \underline y^d_{\textup{B}}.
\end{align}
\end{subequations}
We furthermore split up the matrix $\mathcal K_{\textup{IB}}$ into the columns related to $u$ and $b_N$, respectively, i.\,e.,
\begin{equation*}
  \mathcal K_{\textup{IB}} = \begin{bmatrix} \mathcal K_{\textup{IB}}^u & \mathcal K_{\textup{IB}}^b\end{bmatrix}, 
\end{equation*}
and the submatrices have dimension $\cK_{\textup{IB}}^u \in \R^{N_{\xi}\cdot\N_{\textup{I}}\times \N_{\textup{B}}}$, $\cK_{\textup{IB}}^b\in \R^{N_{\xi}\cdot\N_{\textup{I}}\times N\cdot\N_{\textup{B}}}$. Analogously, we may split the partial mass matrix into $\mathcal M_{\textup{IB}} = [\mathcal M_{\textup{IB}}^u\ \mathcal M_{\textup{IB}}^b]$.

With this notation at hand the discrete state equation \eqref{eq:state_eq_ansatz} can be equivalently written in matrix-vector notation as
\begin{equation*}
  \mathcal K_{\textup{II}}\,\boldsymbol{y}^0 + \cK_{\textup{IB}}^u\boldsymbol u 
  =
  \boldsymbol{f}_{\textup{I}} - \cK_{\textup{IB}}^b \boldsymbol b
\end{equation*}
and in the same way, the objective functional \eqref{eqn:objec_disc} becomes, when neglecting the constant terms independent of $\boldsymbol y^0$ and $\boldsymbol u$,
\begin{equation} \label{eq:dicrete_J}
  \mathcal{J}(\boldsymbol{y}^0,\boldsymbol{u})
  =
  \frac12\,\begin{bmatrix}\boldsymbol{y}^0\\ \boldsymbol{u} \\ \boldsymbol{b}\end{bmatrix}^\top
  \left[
  \begin{array}{ccc}
    \cM_{\textup{II}} & \cM_{\textup{IB}}^u & \cM_{\textup{IB}}^b \\
    \cM_{\textup{BI}}^u & M_{\textup{BB}} & 0 \\
	\cM_{\textup{BI}}^b & 0 & 0
  \end{array}
  \right]
  \begin{bmatrix}
    \boldsymbol{y}^0\\ \boldsymbol{u} \\ \boldsymbol{b}
  \end{bmatrix}
  -
  \begin{bmatrix}
    \boldsymbol{y}^0\\ \boldsymbol{u}
  \end{bmatrix}^\top
  \begin{bmatrix}
    \boldsymbol{y}_{\textup{I}}^d \\
    \boldsymbol{y}_{\textup{B}}^d
  \end{bmatrix}
  + \frac\alpha2\,\boldsymbol{u}^\top M^\partial\boldsymbol{u}.
\end{equation}
It is easy to check that differentiating the Lagrangian
$\mathcal L\colon \R^{N_{\xi}\cdot \N_{\textup{I}}} \times \R^{\N_{\textup{B}}} \times \R^{N_{\xi}\cdot \N_{\textup{I}}}$,
\begin{align*}
  \mathcal L(\boldsymbol{y}^0,\boldsymbol{u}, \boldsymbol{p})
  &=
\mathcal{J}(\boldsymbol{y}^0,\boldsymbol{u}) + \boldsymbol{p}^\top\left(\mathcal K_{\textup{II}}\,\boldsymbol{y}^0 + \cK_{\textup{IB}}^u\boldsymbol u + \cK_{\textup{IB}}^b \boldsymbol b - \boldsymbol{f}_{\textup{I}}\right)
\end{align*}
with respect to the state vector $\boldsymbol{y}^0$ gives the adjoint equation \eqref{eq:adjoint_equation_discrete}
\begin{equation*}
	\cK_{\textup{II}}^\top \,\boldsymbol{p} + \cM_{\textup{II}} \boldsymbol{y}^0
	+ \cM_{\textup{IB}}^u\, \boldsymbol{u} = \boldsymbol{y}^d_{\textup{I}} - \cM_{\textup{IB}}^b\boldsymbol b,
\end{equation*}
and with respect to the control vector $\boldsymbol{u}$ gives the optimality condition \eqref{eq:opt_cond_discrete}
\begin{equation*}
	(\alpha\,M^\partial + M_{\textup{BB}}) \,\boldsymbol u + \mathcal K_{\textup{BI}}^u\,\boldsymbol{p} + \mathcal M_{\textup{BI}}^u\,\boldsymbol{y}^0
	 = \boldsymbol{y}^d_{\textup{B}}.
\end{equation*}
To conclude the previous investigations, we get the following system of linear equations, equivalent to \eqref{eq:opt_ph},
\begin{equation}\label{eq:opt_cond_matrix_vector}
  \begin{bmatrix}
    \cM_{\textup{II}} & \cM_{\textup{IB}}^u & \cK_{\textup{II}}^\top \\
    \cM_{\textup{BI}}^u & M_{\textup{BB}} + \alpha M^\partial & \cK_{\textup{BI}}^u \\
    \cK_{\textup{II}} & \cK_{\textup{IB}}^u & 0
  \end{bmatrix}
  \begin{bmatrix}
    \boldsymbol{y}^0 \\
    \boldsymbol{u} \\
    \boldsymbol{p}
  \end{bmatrix}
  =
  \begin{bmatrix}
    \boldsymbol{y}^d_{\textup{I}} - \cM_{\textup{IB}}^b \boldsymbol b \\
    \boldsymbol{y}^d_{\textup{B}} \phantom{- \cM_{\textup{IB}}^b \boldsymbol b}\\
    \boldsymbol{f}_{\textup{I}} - \mathcal K_{\textup{IB}}^b\boldsymbol{b}
  \end{bmatrix}.
\end{equation}

\subsection{Preconditioning of the system matrix}
\label{sec:preconditioning}

We want to solve \eqref{eq:opt_cond_matrix_vector}  with an iterative solver like \textsc{minres}. Without tailored preconditioners the iteration numbers become incredibly large, in particular when the regularization parameter $\alpha$ and the discretization parameters become small. The aim is thus to construct a preconditioner which is robust with respect to these parameters. We utilize the general approach from \cite{PBenner_AOnwunto_MStoll_2016}, where a preconditioner for a stochastic optimal control problem with distributed control is derived and extend the preconditioner derived for a deterministic control problem with Dirichlet boundary control from \cite{StollWinkler2021} to the case that the problem contains uncertain parameters.

The ideal preconditioner for \eqref{eq:opt_cond_matrix_vector} would be the block-diagonal matrix
\begin{equation*}
  \mathcal{P} = \begin{bmatrix}
        \cM_{\textup{II}} & \cM_{\textup{IB}}^u & 0 \\
    \cM_{\textup{BI}}^u & M_{\textup{BB}} + \alpha M^\partial & 0 \\
    0 & 0 & \mathcal S
  \end{bmatrix}
\end{equation*}
with the Schur complement
\begin{equation*}
  \mathcal S = \begin{bmatrix} \cK_{\text{II}} &  \cK_{\text{IB}}^u\end{bmatrix} \begin{bmatrix}\cM_{\text{II}} & \cM_{\text{IB}}^u \\ \cM_{\text{BI}}^u & M_{\text{BB}} + \alpha\,M^\partial\end{bmatrix}^{-1} \begin{bmatrix} \cK_{\text{II}} \\ \cK_{\text{BI}}^u\end{bmatrix}.
\end{equation*}
The $2\times 2$ block involving the mass matrices can be also approximated by their Schur complement
\begin{equation*}
  \begin{bmatrix}\cM_{\text{II}} & \cM_{\text{IB}}^u \\ \cM_{\text{BI}}^u & M_{\text{BB}} + \alpha\,M^\partial\end{bmatrix}
  \approx \widehat{\mathcal S}
  :=
  \begin{bmatrix}
    \cM_{\text{II}} & 0 \\ 0 & \mathcal S_{\cM}
  \end{bmatrix}
\end{equation*}
with
\begin{equation*}
  \mathcal S_{\cM} = (M_{\text{BB}} + \alpha M^\partial) - \cM_{\text{BI}}^u \,\cM_{\text{II}}^{-1} \cM_{\text{IB}}^u.
\end{equation*}
Combining the above ideas yields the block-diagonal preconditioner
\begin{equation*}
  \mathcal P\approx\begin{bmatrix}
    \cM_{\text{II}} & & \\ & \mathcal S_{\cM} & \\ & & \mathcal S
  \end{bmatrix}.
\end{equation*}
It remains to approximate the 3 diagonal blocks appropriately, so that this preconditioner is cheap to apply but maintaining the robustness. For the first block, containing $\cM_{\text{II}}$ we recall the structure $\cM_{\text{II}} = \overline P \otimes M_{\text{II}}$, see \eqref{eq:kronecker_matrices_II}, and as $\overline P$ is a diagonal matrix, $\cM_{\text{II}}$ is also block-diagonal. Applying the inverse of $\cM_{\text{II}}$ thus requires solving $N_{\xi}$ systems of linear equations with system matrix $M_{\text{II}}$. For the second block, involving $\mathcal S_{\cM}$ we would have to invert a sum of two matrices which is not very practical. However, numerical experiments show that omitting the latter term yields very promising results. To this end, the approximation that is used is
\begin{equation*}
  \mathcal S_{\cM} \approx \widehat{\mathcal{S}}_{\cM} := M_{\text{BB}} + \alpha\,M^\partial,
\end{equation*}
which is of small dimension $\N_{\textup{B}}\times \N_{\textup{B}}$ and can be explicitly assembled. For the Schur complement of the overall system
\begin{equation*}
	\mathcal S= \cK_{\text{II}}\,\cM_{II}^{-1}\,\cK_{\text{II}} + \cK_{\text{IB}}^u \,\mathcal S_{\cM}^{-1}\,\cK_{\text{BI}}^u
\end{equation*}
 we also omit the second term and arrive at
\begin{align*}
  \widehat{\mathcal S} &= \widehat{\cK}_{\text{II}}\,\cM_{\text{II}}^{-1}\,\widehat{\cK}_{\text{II}}
\end{align*}
with suitable approximations $\widehat \cK_{\textup{II}}$ for the stiffness matrix $\cK_{\textup{II}}$. Our experiments have shown that in case of $Q\le 3$ the mean based preconditioner
\begin{equation*}
	\widehat \cK_{\textup{II}} := \overline P\otimes \overline K_{\textup{II}}
\end{equation*}
works very well so that the resulting preconditioner is robust with respect to the spatial discretization. As $\overline P$ is a diagonal matrix, the application of $\widehat \cK_{\textup{II}}^{-1}$ requires to solve $N_\xi$ linear systems with system matrix $K_{\textup{II}}$.
For $Q\ge 4$, however, the iteration numbers for \textsc{minres} drastically increase. In this case the preconditioner proposed by Ullmann \cite{EUllmann_2010}, i.\,e., choosing
\begin{equation*}
	\widehat \cK_{\textup{II}} := (G\otimes I_{\mathcal{N}_{\textup{I}}})(\overline P\otimes \overline K_{\textup{II}})
\end{equation*}
with
\begin{equation*}
	G = \overline P + \sum_{k=1}^N \frac{\text{tr}(K_{k,\textup{II}}^\top\,\overline K_{\textup{II}})}{\text{tr}(\overline K_{\textup{II}}^\top\,\overline K_{\textup{II}})} P_k
\end{equation*}
yields much better results.
Note that the application of $\widehat\cK_{\textup{II}}^{-1}$ requires in this case additionally $\mathcal{N}_{\textup{I}}$ solves of linear systems with system matrix $G$.

\subsection{Constrained problem}
\label{sec:constrained_problem}

In this section we briefly discuss the extension to the case that box constraints on the control variable are present. The discretized optimal control problem, analogous to \eqref{eq:dicrete_J},  reads as follows 
\begin{align}\label{eq:dc1}
 \min \; \mathcal{J}(\boldsymbol{y}^0,\boldsymbol{u}), \quad \hbox{subject to} \quad \boldsymbol{u}_a \leq \boldsymbol{u} \leq \boldsymbol{u}_b,
\end{align}
where $\boldsymbol{u}_a,\boldsymbol{u}_b \in \mathbb{R}^{\mathcal{N}_B}$ are the vectors $\boldsymbol{u}_a = u_a\,\vec 1, \boldsymbol{u}_b=u_b\,\vec 1$, and the inequalities are understood component-wise. Then, if $\boldsymbol{u}$ is the solution of \eqref{eq:dc1}, one concludes the existence of unique  Lagrange multipliers $\boldsymbol{\lambda}_a, \boldsymbol{\lambda}_b \in \mathbb{R}^{\mathcal{N}_B}$  such that 
\begin{subequations}\label{eq:dc2}
\begin{align}
(\alpha\,M^\partial+M_{\textup{BB}}) \,\boldsymbol u + \mathcal K_{\textup{BI}}^u\,\boldsymbol{p} + \mathcal M_{\textup{BI}}^u\,\boldsymbol{y}^0
	- \boldsymbol{y}^d_{\textup{B}} + \underbrace{\boldsymbol{\lambda}_b - \boldsymbol{\lambda}_a}_{\boldsymbol{\lambda}} &= 0, \\
\boldsymbol{\lambda}_a \geq 0, \; \boldsymbol{u}_a \leq \boldsymbol{u}, \; \boldsymbol{\lambda}_a^T (\boldsymbol{u}_a - \boldsymbol{u})  &=  0,\label{eq:compl_cond_1} \\
\boldsymbol{\lambda}_b \geq 0, \; \boldsymbol{u} \leq \boldsymbol{u}_b, \; \boldsymbol{\lambda}_b^T (\boldsymbol{u} - \boldsymbol{u}_b) &=  0.\label{eq:compl_cond_2}
\end{align}
\end{subequations}
The complementary conditions \eqref{eq:compl_cond_1},\eqref{eq:compl_cond_2} can also be expressed by
\begin{align}\label{eq:dc3}
\boldsymbol{\lambda} - \max\{\boldsymbol{0}, \boldsymbol{\lambda} + \sigma \, (\boldsymbol{u} - \boldsymbol{u}_b) \}  
                     - \min\{\boldsymbol{0}, \boldsymbol{\lambda} - \sigma \, (\boldsymbol{u}_a - \boldsymbol{u})\} = 0,
\end{align}
where $\sigma > 0 $ is an arbitrary fixed real number and the $\max$- and $\min$-operations are understood component-wise. For every pair $(\boldsymbol{u}, \boldsymbol{\lambda})$, we define the sets of active and inactive indices by
\begin{subequations}\label{eq:dc4}
\begin{align}
\mathcal{A}_a(\boldsymbol{u}, \boldsymbol{\lambda}) &= \{ j\in \{1,\ldots,\mathcal N_{\textup{B}}\}: \, \lambda_j - \sigma \, (u_a - u_j) < 0 \}, \\
\mathcal{A}_b(\boldsymbol{u}, \boldsymbol{\lambda}) &= \{ j\in \{1,\ldots,\mathcal N_{\textup{B}}\}: \, \lambda_j + \sigma \, (u_j - u_b) > 0 \}, \\
\mathcal{I}(\boldsymbol{u}, \boldsymbol{\lambda}) &= \{1, \ldots, \mathcal{N}_{\textup B}\} \backslash \big( \mathcal{A}_a \cup \mathcal{A}_b \big).
\end{align}
\end{subequations}
Then, the complementarity conditions in \eqref{eq:compl_cond_1}--\eqref{eq:compl_cond_2} are equivalent to
\begin{subequations}\label{eq:dc5}
    \begin{align}
    \boldsymbol{u} &=\boldsymbol{u}_a, & \boldsymbol{\lambda}_b &=0, & \boldsymbol{\lambda} &\leq 0 && \hbox{on  }  \mathcal{A}_a(\boldsymbol{u}, \boldsymbol{\lambda}), \\
    \boldsymbol{u} &=\boldsymbol{u}_b,& \boldsymbol{\lambda}_a &=0, & \boldsymbol{\lambda} &\geq 0 && \hbox{on  }  \mathcal{A}_b(\boldsymbol{u}, \boldsymbol{\lambda}), \\
    \boldsymbol{u}_a &< \boldsymbol{u} < \boldsymbol{u}_b,& \boldsymbol{\lambda}_a&=\boldsymbol{\lambda}_b=0,& \boldsymbol{\lambda} &=0 &&\hbox{on  }  \mathcal{I}.
\end{align}
\end{subequations}

\begin{algorithm}[htp!]
\caption{Primal-dual active set (PDAS) strategy}
\label{Alg:pdas}
\begin{algorithmic}[lines]
    \STATE \textbf{Input}: Given the parameter $\sigma$.
    \STATE Set $k=0$.
    \STATE Set initial values for $\boldsymbol{u}^{(0)}$, $\boldsymbol{\lambda}_a^{(0)}$, and $\boldsymbol{\lambda}_b^{(0)}$.
    \STATE Compute indices of  the initial active sets  $\mathcal{A}_a^{(0)} := \mathcal{A}_a(\boldsymbol{u}^{(0)}, \boldsymbol{\lambda}^{(0)})$, $\mathcal{A}_b^{(0)}:=\mathcal{A}_b(\boldsymbol{u}^{(0)}, \boldsymbol{\lambda}^{(0)})$, 
          and the inactive set  $\mathcal{I}^{(0)}:=\mathcal{I}(\boldsymbol{u}^0, \boldsymbol{\lambda}^0)$ using \eqref{eq:dc4}.
    \FOR {$k=1,2, \ldots$}
        \STATE Solve (\ref{eq:dc6}) to obtain  $(\boldsymbol{u}^{(k)},\boldsymbol{\lambda}^{(k)})$.
        \STATE Compute indices of  active sets  $\mathcal{A}_a^{(k)} :=\mathcal{A}_a(\boldsymbol u^{(k)}, \boldsymbol{\lambda}^{(k)})$, $\mathcal{A}_b^{(k)}:= \mathcal{A}_b(\boldsymbol u^{(k)}, \boldsymbol{\lambda}^{(k)})$, and inactive set  $\mathcal{I}^{(k)}:=\mathcal{I}(\boldsymbol u^{(k)},\boldsymbol{\lambda}^{(k)})$ using \eqref{eq:dc4}.
        \IF{$\mathcal{A}_a^{(k)}=\mathcal{A}^{(k+1)}_a$, $\mathcal{A}^{(k)}_{b}=\mathcal{A}^{(k+1)}_{b}$, and $\mathcal{I}^{(k)}=\mathcal{I}^{(k+1)}$}
            \STATE STOP.
        \ENDIF
        \STATE Set $k:=k+1$.
    \ENDFOR
\end{algorithmic}
\end{algorithm}

Algorithm~\ref{Alg:pdas} summarizes the primal-dual active set (PDAS) strategy as a semi-smooth Newton step, see, e.g., \cite{MBergounioux_KIto_KKunisch_1999a,MMateos_2018}. The linear system to be solved in each iteration reads
\begin{equation}\label{eq:dc6}
  \begin{bmatrix}
    \cM_{\textup{II}} & \cM_{\textup{IB}}^u                       & \cK_{\textup{II}}^\top  &  \\
    \cM_{\textup{BI}}^u & M_{\textup{BB}} + \alpha\,M^\partial & \cK_{\textup{BI}}^u       & I \\
    \cK_{\textup{II}} & \cK_{\textup{IB}}^u                       &                         &                   \\
     & \sigma \big( I_{\mathcal{A}_a^{(k)}} + I_{\mathcal{A}_b^{(k)}} \big) &                   &I_{\mathcal{I}^{(k)}}
  \end{bmatrix}
  \begin{bmatrix}
    \boldsymbol{y}^0 \\
    \boldsymbol{u} \\
    \boldsymbol{p} \\
    \boldsymbol{\lambda}
  \end{bmatrix}
  =
  \begin{bmatrix}
    \boldsymbol{y}^d_{\textup{I}}- \cM_{\textup{IB}}^b \boldsymbol b \\
    \boldsymbol{y}^d_{\textup{B}} \\
    \boldsymbol{f}_{\textup{I}} - \cK_{\textup{IB}}^b \boldsymbol b\\
    \sigma \big( \chi_{\mathcal{A}_a^{(k)}} \boldsymbol{u}_a  + \chi_{\mathcal{A}_b^{(k)}} \boldsymbol{u}_b \big)
  \end{bmatrix},
\end{equation}
where $I$ is the identity matrix of dimension $\N_{\textup{B}}\times \N_{\textup{B}}$ and $I_{\mathcal S}$ the diagonal binary matrix with nonzero entries in the index set $\mathcal S\subset \{1,\ldots,\N_{\textup{B}}\}$.

The system \eqref{eq:dc6} can be made symmetric by eliminating $\boldsymbol{\lambda}|_{\mathcal I} = 0$ and by dividing the last block row by $\sigma$. This allows to apply tailored preconditioners for which we refer to \cite{JWPearson_JPestana_2020,MPorcelli_VSimoncini_MTani_2015}. We note that the value of $\sigma$ has no effect on the solution but it affects the update of the active  sets in \eqref{eq:dc4}. The choice $\sigma = \alpha$ is used in the numerical simulations.


\section{Numerical experiments}\label{sec:numeric} 

In this section, we carry out some numerical experiments to illustrate the validity and efficiency of our numerical approaches to solve the stochastic Dirichlet boundary control problem \eqref{eqn:objec}--\eqref{eqn:constPDE}. Throughout the following experiments, the random coefficients are described by the corresponding covariance function 
\begin{align}\label{Cov:Gauss}
	C_{z} (\boldsymbol{x},\boldsymbol{y}) = \kappa^2_{z} \prod_{k=1}^{d}  e^{-\left| x_k -y_k \right|/\ell_k } \quad \forall \boldsymbol{x},\boldsymbol{y} \in \mathcal{D} \subset \mathbb{R}^d,
\end{align}
with the  correlation length $\ell_k$ and standard deviation $\kappa_z$. In the KL representation \eqref{eqn:kltrun}, the exact eigenpair $(\lambda_j, \phi_j)$ of the covariance 
function \eqref{Cov:Gauss} is used; see, e.g., \cite{GJLord_CEPowell_TShardlow_2014} for the  computation of analytical values. We also note that the contribution of the 
truncation error is not taken into account for the sake of simplicity. In this study,  we consider the uniform distribution of random variables over the interval
$[-\sqrt{3},\sqrt{3}]$, that is,  $ \xi_i \sim \mathcal{U}[-\sqrt{3},\sqrt{3}]$ for $i=1,\ldots, N$.  Hence, Legendre orthogonal 
polynomial basis in $\Gamma$ and the piecewise linear finite elements in $\mathcal{\cD}$ are used to compute the state variable $y$ and the adjoint state $p$. For the 
computation of the control $u$, we also utilize linear finite elements on the boundary $\partial \mathcal{\cD}$. Further, we assume that there is no partition of $\Gamma$ and we only increase the polynomial degree $Q$.

\subsection{One dimensional unconstrained problem}
\label{sec:1d_example}

As a first benchmark example, we consider an unconstrained optimal control problem, that is, $U^{\textup{ad}} = L^2(\cD)$, in the one-dimensional spatial 
domain $\mathcal{D}=[0,1]$. Since each probability density function is uniform on $\Gamma_i$, the joint density function $\rho(\xi)$ is $ \frac{1}{(2\sqrt{3})^N}$. The optimal 
control problem we consider reads
\begin{eqnarray*}
	\min \limits_{u \in U^{\textup{ad}}} \; \mathcal{J}(y,u) := \frac{1}{2} \int_{[-\sqrt{3},\sqrt{3}]^N} \frac{1}{(2\sqrt{3})^N}  \int_0^1 (y-y^d)^2 \, \d x \, \, \d\xi 
        +  \frac{\alpha}{2} \big( u(0)^2 + u(1)^2 \big)
\end{eqnarray*}
subject to 
  \begin{align*}
    &-(a_N(x,\xi) \, y'(x, \xi))' = f_N(x,\xi)    &&             \text{for}\ (x,\xi) \in  [0,1]         \times [-\sqrt{3},\sqrt{3}]^N,\\
    & y(0, \xi) =  u(0) \qquad  y(1, \xi) =  u(1)   &&     \text{for}\ \xi  \in [-\sqrt{3},\sqrt{3}]^N,
  \end{align*}  
and we choose the source term $f_N(x,\xi) = 1 + \kappa \sum_{k=1}^{N}\sqrt{\lambda_k}\,\phi_k(x)\,\xi_k$, with $(\lambda_k,\phi_k)$ the eigenpairs of \eqref{Cov:Gauss}, regularization parameter $\alpha=10^{-2}$, desired state $y^d = \sin(\pi \,x) + \sin(2\pi \,x)$, and the diffusion coefficient $a_N(x,\xi) = 1 + \kappa\,\sum_{k=1}^{N} \sqrt{\lambda_k}\,\phi_k(x)\,\xi_k$. The correlation length is $\ell=1$ and the standard deviation $\kappa=0.5$.

\begin{figure}
	\subfloat[Convergence w.\,r.\,t.\ $h$ ($Q$ fixed)\label{fig:errors_1d_example_h}]{\includegraphics[width=.48\textwidth]{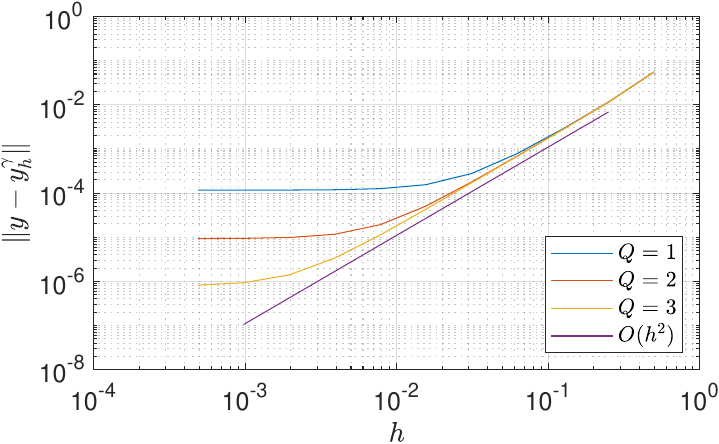}}
	\subfloat[Convergence w.\,r.\,t.\ $Q$ ($h$ fixed)\label{fig:errors_1d_example_Q}]{\includegraphics[width=.48\textwidth]{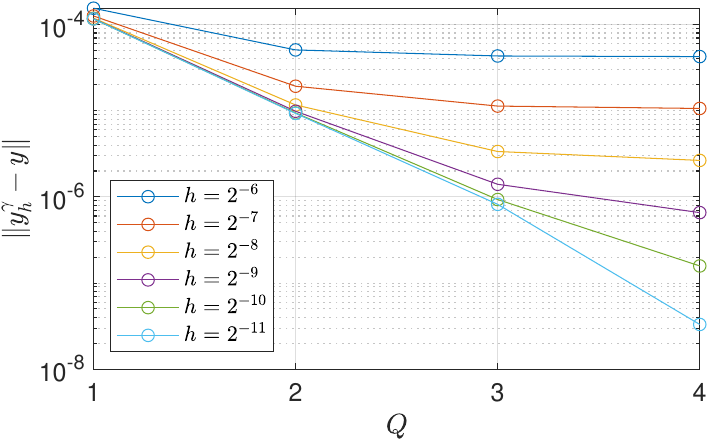}}
	\caption{Error plots measuring $\|y-y_h^\gamma\|_{L^2(\Gamma; L^2(\cD))}$ for the example from Section~\ref{sec:1d_example}.}
	\label{fig:errors_1d_example}
\end{figure}
\begin{table}
\centering
\begin{tabular}{lccccccccccc}
    \toprule
     order $Q$ & \multicolumn{11}{c@{}}{mesh size $h$} \\
     \cmidrule(l){2-12}
     & $2^{-2}$ & $2^{-3}$ & $2^{-4}$ & $2^{-5}$ & $2^{-6}$ & $2^{-7}$ & $2^{-8}$ & $2^{-9}$ & $2^{-10}$ & $2^{-11}$ & $2^{-12}$ \\
     \midrule
    1  &  20  &  29  &  34  &  38  &  40  &  44  &  44  &  48  &  44  &  44  &  48  \\
    2  &  29  &  35  &  42  &  44  &  51  &  54  &  54  &  52  &  55  &  56  &  52  \\
    3  &  33  &  41  &  47  &  49  &  57  &  56  &  62  &  58  &  62  &  62  &  57  \\
    4  &  35  &  44  &  51  &  56  &  63  &  63  &  67  &  67  &  66  &  67  &  66  \\
    \bottomrule
\end{tabular}
\caption{Iteration numbers for \textsc{minres} to achieve a relative tolerance of $10^{-10}$ in the example from Section~\ref{sec:1d_example}.}
\label{tab:ex_1d_iterations}
\end{table}
The exact solution to our problem is unknown and in order to compute the approximation error $\|y-y_h^\gamma\|_{L^2(\Gamma;L^2(\cD))}$ we use, instead of the analytical solution $y$, a reference solution on the finest grid and highest polynomial degree ($h=2^{-12}$, $Q=4$). The measured errors are illustrated in Figure~\ref{fig:errors_1d_example}. In Figure~\ref{fig:errors_1d_example_h} it can be observed that the state converges with order $2$ with respect to the spatial discretization until the stochastic error dominates the overall error. The observed convergence order is even higher than predicted in Theorem~\ref{thm:final_estimate} which is due to the fact that we have proved a worst-case estimate only which is sharp in the case that the solution is as regular as one would expect on general convex polygonal domains in space dimension $d\ge 2$. This fact does not play a role in a 1D example as there are no singularities involved stemming from the geometry of the computational domain. In Figure~\ref{fig:errors_1d_example_Q} the convergence with respect to the polynomial degree $Q$ of the discretization in the stochastic space is illustrated. We indeed observe the exponential convergence unless the spatial discretization error not dominates in the total error. Both results confirm the theoretically predicted convergence behavior from Theorem~\ref{thm:final_estimate}. 

The optimality system \eqref{eq:opt_cond_matrix_vector} has been  solved with a \textsc{minres} method in \textsc{matlab} and we have used the preconditioner derived in Section~\ref{sec:preconditioning}. In Table~\ref{tab:ex_1d_iterations} we report the iteration numbers of \textsc{minres} required to achieve the relative tolerance of $10^{-10}$. The preconditioner is not perfectly robust with respect to refinement in the spatial and stochastic space, but the iteration numbers remain in a moderate order of magnitude. It is not reported  but further numerical tests have shown that the iteration numbers will not change when choosing the regularization parameter $\alpha$ smaller.

\subsection{Two dimensional unconstrained problem}
\label{sec:2d_example}

Next, we consider a two dimensional unconstrained problem with the following data 
\begin{equation*}
	\cD:=[0,1]^2,\quad y^d(\bx) = x_1^2\,\sin(\pi\,x_2),\quad \bar f(\bx) = 0,\quad \bar a(\bx) = 1,\quad \alpha = 0.5,\quad N=10.
\end{equation*}
The boundary noise $b_N$ follows the expansion stemming from the Brownian bridge
\begin{equation*}
	b_N(\bx(s),\xi) = \sum_{k=1}^N \frac{\sqrt2}{k\,\pi}\,\sin(k\,\pi\,s)\,\xi_k,
\end{equation*}
with $[0,1]\ni s\mapsto \bx(s)\in \partial\cD$ a parametrization of the boundary of $\cD$. Here, the correlation length and the standard deviation are  $\ell=0.9$ and $\kappa=0.5$, respectively. The initial mesh for the computation consists of 4 triangles having their base at one of the edges of $\partial\cD$ and the opposite vertex in $(\frac12,\frac12)$. Then, we use twice bisection of each triangle across their longest edge to obtain the mesh on the next refinement level. Again, the exact solution is unknown and we compute the error using the solution for $Q=4$ and $h=2^{-8}$ as good approximation for $y$. The expected values of the optimal state $\E[y_h^\gamma]$ and adjoint state $\E[p_h^\gamma]$ are depicted in Figure~\ref{fig:solution_2d_example}. We observe the typical behaviour for the solution of Dirichlet control problems, that the solution becomes zero in the (convex) corners.

As can be seen in Figure~\ref{fig:errors_2d_example_h} the error in the control $\|u-u_h\|_{L^2(\partial\cD)}$ converges with an approximate rate of $3/2$. Note that even for the deterministic problem the best-possible convergence rate for the control is $1$ provided that the input data is smooth and the computational domain has no corners with opening angle larger than $120^\circ$; see \cite{TApel_MMateos_JPfefferer_ARosch_2018}. The improved rate of $3/2$ is due to the fact that the meshes in our computation possess a so-called superconvergence property allowing for higher convergence rates; see, e.g., \cite[Section 4]{TApel_MMateos_JPfefferer_ARosch_2018} for a detailed discussion on these effects.
For the convergence with respect to the polynomial degree $Q$ of the discretization in the stochastic space, an exponential convergence is observed unless the spatial discretization error not dominates in the total error; see, Figure~\ref{fig:errors_2d_example_Q}.

\begin{figure}[tb]
	\subfloat[Optimal state ${\E[y_h^\gamma]}$]{\includegraphics[width=.48\textwidth]{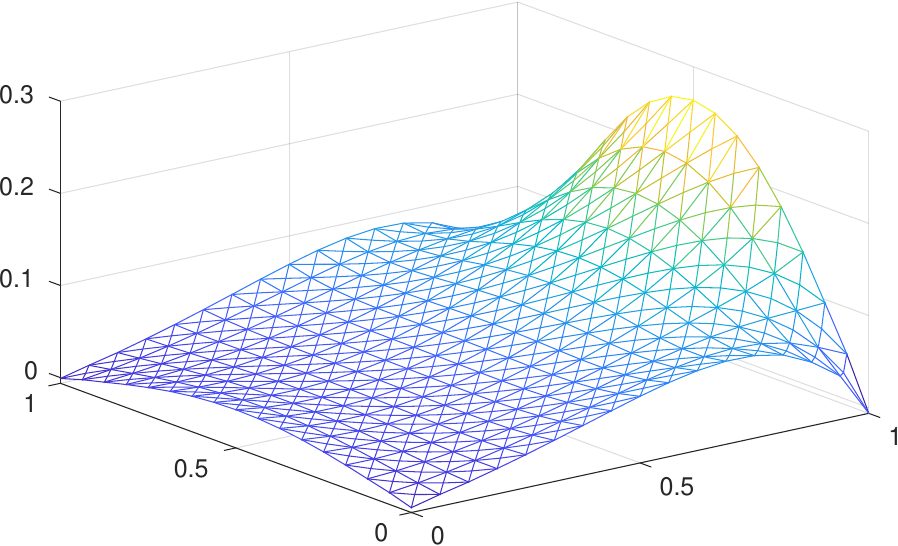}}%
	\subfloat[Optimal adjoint state ${\E[p_h^\gamma]}$]{\includegraphics[width=.48\textwidth]{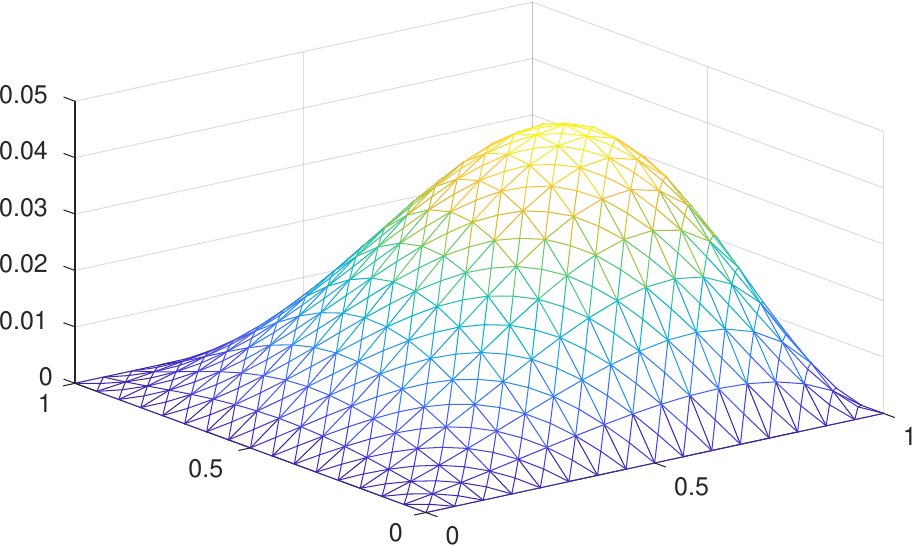}}
	\caption{Optimal state ${\E[y_h^\gamma]}$ and optimal adjoint state ${\E[p_h^\gamma]}$ with $Q=2$ and $h=2^{-5}$ for the optimal control problem from Section~\ref{sec:2d_example}.}
	\label{fig:solution_2d_example}
\end{figure}
\begin{figure}[tb]
	\subfloat[Convergence w.\,r.\,t.\ $h$ ($Q$ fixed)\label{fig:errors_2d_example_h}]{\includegraphics[width=.48\textwidth]{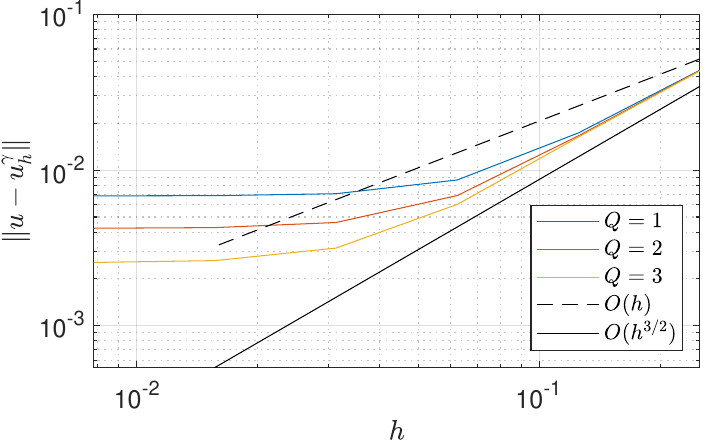}}
	\subfloat[Convergence w.\,r.\,t.\ $Q$ ($h$ fixed)\label{fig:errors_2d_example_Q}]{\includegraphics[width=.48\textwidth]{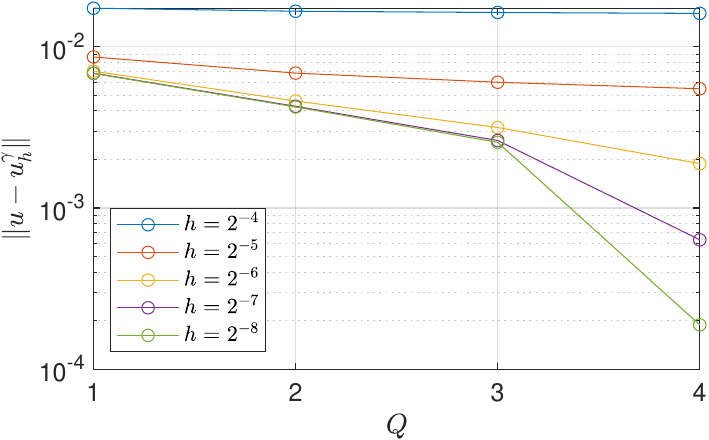}}
	\caption{Error plots measuring $\|u-u_h^\gamma\|_{L^2(\partial\cD)}$  for the example from Section~\ref{sec:2d_example}.}
	\label{fig:errors_2d_example}
\end{figure}

\subsection{Two dimensional constrained problem}
\label{sec:2d_example_const}

\begin{figure}[tb]
	\subfloat[Optimal state ${\E[y_h^\gamma]}$]{\includegraphics[width=.48\textwidth]{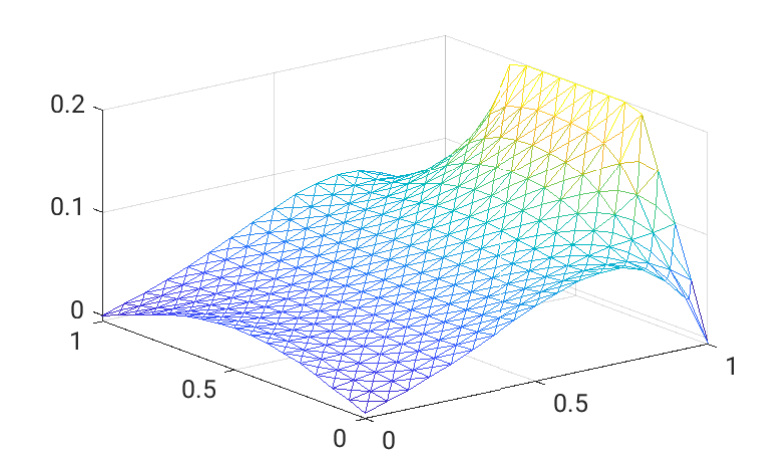}}%
	\subfloat[Optimal adjoint state ${\E[p_h^\gamma]}$]{\includegraphics[width=.48\textwidth]{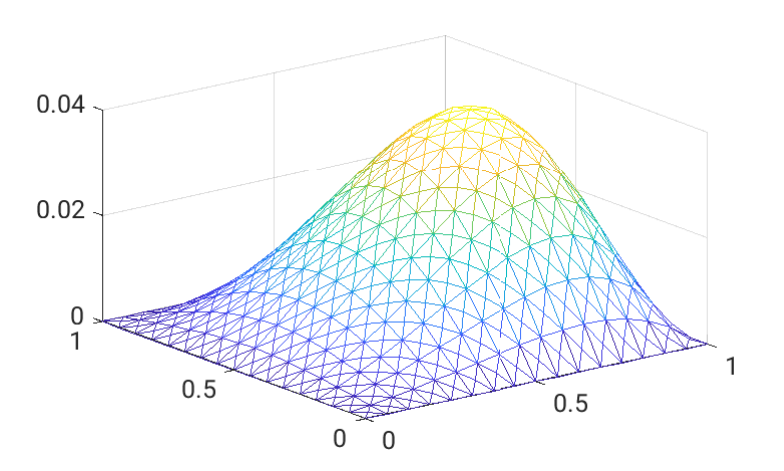}}
	\caption{Optimal state ${\E[y_h^\gamma]}$ and optimal adjoint state ${\E[p_h^\gamma]}$ with $Q=2$ and $h=2^{-5}$ for the optimal control problem from Section~\ref{sec:2d_example_const}.}
	\label{fig:solution_2d_constr_example}
\end{figure}

In a final experiment we consider the problem from Section~\ref{sec:2d_example} imposing additionally the upper control bound $u\le u_b = 0.2$. We use the primal-dual active set strategy, explained in Section~\ref{sec:constrained_problem}, and the \textsc{BiCGstab} method from \textsc{Matlab} with a Schur complement preconditioner to solve the linear systems \eqref{eq:dc6}. We compute the solution on a sequence of meshes with mesh size $h_\ell=2^{-\ell}$, $\ell=1,\ldots,8$, and polynomial degrees for the stochastic basis functions $Q=1,2,3$, whereas  the solution on $h=2^{-8}$ and $Q=3$ is used as reference solution for the error computation. The expected values of the optimal state and adjoint state are illustrated in Figure~\ref{fig:solution_2d_constr_example}. The resulting error propagation can be seen in Figure~\ref{fig:errors_2d_example_constrained}. As before, we see exponential convergence with respect to $Q$ and polynomial convergence with rate $3/2$ with respect to $h$.

\begin{figure}[tb]
	\subfloat[Convergence w.\,r.\,t.\ $h$ ($Q$ fixed)\label{fig:errors_2d_constr_example_h}]{\includegraphics[width=.48\textwidth]{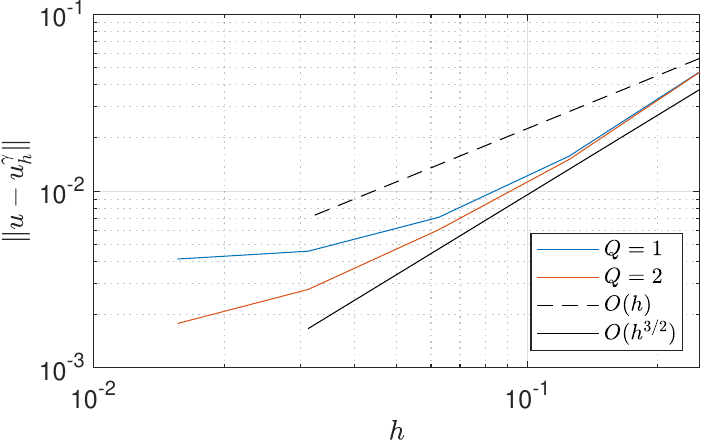}}
	\subfloat[Convergence w.\,r.\,t.\ $Q$ ($h$ fixed)\label{fig:errors_2d_constr_example_Q}]{\includegraphics[width=.48\textwidth]{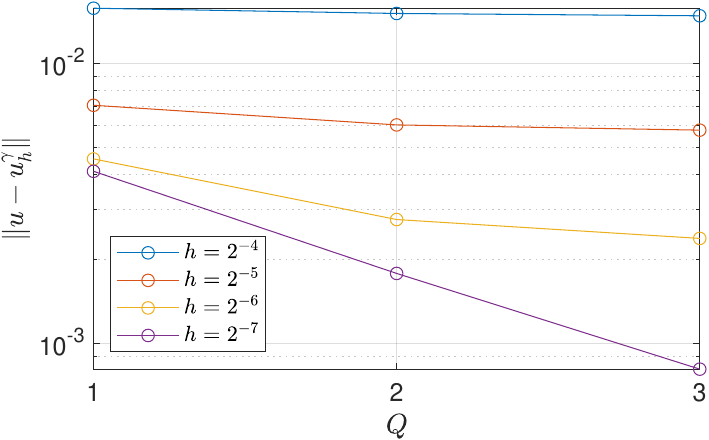}}
	\caption{Error plots measuring $\|u-u_h^\gamma\|_{L^2(\partial\cD))}$ for the example from Section~\ref{sec:2d_example_const}.}
	\label{fig:errors_2d_example_constrained}
\end{figure}



\section{Concluding remarks}\label{sec:conclusion} 

In this article, we have explored the application of the stochastic Galerkin method to Dirichlet boundary control problems with uncertain data, aiming to minimize the expected tracking cost functional under deterministic control constraints. By utilizing the finite dimensional noise assumption, we transform the stochastic state problem in its very weak form into a high-dimensional deterministic problem and then employ standard piecewise linear and continuous finite elements for all unknowns. A rigorous error analysis is provided and  numerical examples demonstrate the validation of the theoretical results and the effectiveness of the proposed preconditioners for both unconstrained and constrained scenarios. As a future study, the obtained results can be extended to a broader range of risk measures \cite{JORoysets_2022,RTRockafellar_JORoyset_2015} or integrated with various types of constraints, such as probabilistic or almost sure state constraints \cite{CGeiersbach_RHenrion_PPerezAros_2025,DPKouri_MStaudigl_TMSurowiec_2023}.

\section*{Acknowledgements}
HY gratefully acknowledges the research support provided by the Scientific and Technological Research Council of Turkey (TUBITAK) under the program TUBITAK 2219-International Postdoctoral Research Fellowship (Project No: 1059B192302207) and would like to thank the Max Planck Institute for Dynamics of Complex Technical Systems, Magdeburg for its excellent hospitality.

\bibliographystyle{plain}
\bibliography{references,more_references}

\begin{thebibliography}{10}

\bibitem{RAAdams_1975}
R.~A. Adams.
\newblock {\em Sobolev Spaces}.
\newblock Academic Press, Orlando, San Diego, New-York, 1975.

\bibitem{AAAli_EUllman_MHinze_2017}
A.~A. Ali, E.~Ullmann, and M.~Hinze.
\newblock Multilevel {Monte Carlo} analysis for optimal control of elliptic
  {PDEs} with random coefficients.
\newblock {\em SIAM/ASA J. Uncertain. Quantif.}, 5:466--492, 2017.

\bibitem{DAFrench_JTKing_1993}
D.~A.~French anf J.~T.~King.
\newblock Analysis of a robust finite element approximation for a parabolic
  equation with rough boundary data.
\newblock {\em Math. Comput.}, 60:79--104, 1993.

\bibitem{TApel_MMateos_JPfefferer_ARosch_2015}
T.~Apel, M.~Mateos, J.~Pfefferer, and A.~R\"osch.
\newblock On the regularity of the solutions of {D}irichlet optimal control
  problems in polygonal domains.
\newblock {\em SIAM J. Control Optim.}, 53(6):3620--3641, 2015.

\bibitem{TApel_MMateos_JPfefferer_ARosch_2018}
T.~Apel, M.~Mateos, J.~Pfefferer, and A.~R\"osch.
\newblock Error estimates for {D}irichlet control problems in polygonal
  domains: quasi--uniform meshes.
\newblock {\em Math. Control and Relat. F.}, 8(1):217--245, 2018.

\bibitem{IBabuska_RTempone_GEZouraris_2004a}
I.~Babu{\v{s}}ka, R.~Tempone, and G.~E. Zouraris.
\newblock {G}alerkin finite element approximations of stochastic elliptic
  partial differential equations.
\newblock {\em SIAM J. Numer. Anal.}, 42(2):800--825, 2004.

\bibitem{AVBarel_SVandewalle_2019}
A.~V. Barel and S.~Vandewalle.
\newblock Robust optimization of {PDEs} with random coefficients using a
  multilevel {Monte Carlo} method.
\newblock {\em SIAM/ASA J. Uncertain. Quantif.}, 7(1):174--202, 2019.

\bibitem{PBenner_AOnwunto_MStoll_2016}
P.~Benner, A.~Onwunta, and M.~Stoll.
\newblock Block-diagonal preconditioning for optimal control problems
  constrained by {PDEs} with uncertain inputs.
\newblock {\em SIAM. J. Matrix Anal. $\&$ Appl.}, 37:491--518, 2016.

\bibitem{MBerggren_2004a}
M.~Berggren.
\newblock Approximations of very weak solutions to boundary-value problems.
\newblock {\em SIAM J. Numer. Anal.}, 42(2):860--877, 2004.

\bibitem{MBergounioux_KIto_KKunisch_1999a}
M.~Bergounioux, K.~Ito, and K.~Kunisch.
\newblock Primal-dual strategy for constrained optimal control problems.
\newblock {\em SIAM J. Control Optim.}, 37(4):1176--1194, 1999.

\bibitem{ABorzi_2010a}
A.~Borz{\`{\i}}.
\newblock Multigrid and sparse-grid schemes for elliptic control problems with
  random coefficients.
\newblock {\em Comput. Vis. Sci.}, 13:153--160, 2010.

\bibitem{ABorzi_VSchulz_CSchillings_GvonWinckel_2010a}
A.~Borz{\`{\i}}, V.~Schulz, C.~Schillings, and G.~{von Winckel}.
\newblock On the treatment of distributed uncertainties in {PDE} constrained
  optimization.
\newblock {\em GAMM Mitt.}, 33(2):230--246, 2010.

\bibitem{ABorzi_GvonWinckel_2009a}
A.~Borz{\`{\i}} and G.~{von Winckel}.
\newblock Multigrid methods and sparse-grid collocation techniques for
  parabolic optimal control problems with random coefficients.
\newblock {\em SIAM J. Sci. Comput.}, 31(3):2172--2192, 2009.

\bibitem{RHCameron_WTMartin_1947a}
R.~H. Cameron and W.~T. Martin.
\newblock The orthogonal development of non-linear functionals in series of
  {F}ourier-{H}ermite functionals.
\newblock {\em Ann. of Math. (2)}, 48:385--392, 1947.

\bibitem{ECasas_1985}
E.~Casas.
\newblock {$L^2$} estimates for the finite element method for the {D}irichlet
  problem with singular data.
\newblock {\em Numer. Math.}, 47:627--632, 1985.

\bibitem{ECasas_JPRaymond_2006b}
E.~Casas and J.-P. Raymond.
\newblock Error estimates for the numerical approximation of {D}irichlet
  boundary control for semilinear elliptic equations.
\newblock {\em SIAM J. Control Optim.}, 45(5):1586--1611, 2006.

\bibitem{ECasas_JPRaymond_2006a}
E.~Casas and J.-P. Raymond.
\newblock The stability in {$W\sp {s,p}(\Gamma)$} spaces of {$L\sp
  2$}-projections on some convex sets.
\newblock {\em Numer. Funct. Anal. Optim.}, 27(2):117--137, 2006.

\bibitem{PCiloglu_HYucel_2023}
P.~\c{C}\.{i}lo\u{g}lu and H.~Y\"ucel.
\newblock Stochastic discontinuous {G}alerkin methods for robust deterministic
  control of convection diffusion equations with uncertain coefficients.
\newblock {\em Adv. Comput. Math.}, 49(16), 2023.

\bibitem{PChen_AQuarteroni_GRozza_2013}
P.~Chen, A.~Quarteroni, and Gianluigi Rozza.
\newblock Stochastic optimal {R}obin boundary control problems of
  advection-dominated elliptic equations.
\newblock {\em SIAM J. Numer. Anal.}, 51:2700--2722, 2013.

\bibitem{PGCiarlet_1978a}
P.~G. Ciarlet.
\newblock {\em The Finite Element Method for Elliptic Problems}.
\newblock North--Holland, Amsterdam, New York, 1978.

\bibitem{KDeckelnick_AGunther_MHinze_2009}
K.~Deckelnick, A.~G\"unther, and M.~Hinze.
\newblock Finite element approximation of {D}irichlet boundary control for
  elliptic {PDEs} on two--and three--dimensional curved domains.
\newblock {\em SIAM J. Control Optim.}, 48:2798--2819, 2009.

\bibitem{OGErnst_EUllmann_2010}
O.~G. Ernst and E.~Ullmann.
\newblock Stochastic {G}alerkin matrices.
\newblock {\em SIAM J. Matrix Anal. Appl.}, 31:1848--1872, 2010.

\bibitem{SGarreis_MUlbrich_2017}
S.~Garreis and M.~Ulbrich.
\newblock Constrained optimization with low-rank tensors and applications to
  parametric problems with {PDEs}.
\newblock {\em SIAM J. Sci. Comput.}, 39:A25--A54, 2017.

\bibitem{CGeiersbach_RHenrion_PPerezAros_2025}
C.~Geiersbach, R.~Henrion, and P.~Pérez-Aros.
\newblock Numerical solution of an optimal control problem with probabilistic
  and almost sure state constraints.
\newblock {\em J. Optim. Theory Appl.}, 204(7), 2025.

\bibitem{MDGunzburger_HCLee_JLee_2011a}
M.~D. Gunzburger, H.-C. Lee, and J.~Lee.
\newblock Error estimates of stochastic optimal {N}eumann boundary control
  problems.
\newblock {\em SIAM J. Numer. Anal.}, 49(4):1532--1552, 2011.

\bibitem{MDGunzburger_SManservisi_2000a}
M.~D. Gunzburger and S.~Manservisi.
\newblock The velocity tracking problem for {N}avier--{S}tokes flows with
  boundary control.
\newblock {\em SIAM J. Control Optim.}, 39:594--634, 2000.

\bibitem{PAGuth_VKaarnioja_FYKuo_CSchillings_IHSloan_2021a}
P.~A. Guth, V.~Kaarnioja, F.~Y. Kuo, C.~Schillings, and I.~H. Sloan.
\newblock A quasi-{Monte Carlo} method for optimal control under uncertainty.
\newblock {\em SIAM/ASA J. Uncertain. Quantif.}, 9(2):354--383, 2021.

\bibitem{LSHou_JLee_HManouzi_2011}
L.~S. Hou, J.~Lee, and H.~Manouzi.
\newblock Finite element approximations of stochastic optimal control problems
  constrained by stochastic elliptic {PDEs}.
\newblock {\em J. Math. Anal. Appl.}, 384:87--103, 2011.

\bibitem{KKarhunen_1947}
K.~Karhunen.
\newblock \"{U}ber lineare {M}ethoden in der {W}ahrscheinlichkeitsrechnung.
\newblock {\em Ann. Acad. Sci. Fennicae. Ser. A. I. Math.-Phys.}, 1947(37):79,
  1947.

\bibitem{DPKouri_MHeinkenschloss_DRidzal_BGWaanders_2013}
D.~P. Kouri, M.~Heinkenschloss, D.~Ridzal, and B.~G. van Bloemen~Waanders.
\newblock A trust-region algorithm with adaptive stochastic collocation for
  {PDE} optimization under uncertainty.
\newblock {\em SIAM J. Sci. Comput.}, 35:A1847--A1879, 2013.

\bibitem{DPKouri_MStaudigl_TMSurowiec_2023}
D.~P. Kouri, M.~Staudigl, and T.~M. Surowiec.
\newblock A relaxation-based probabilistic approach for {PDE}-constrained
  optimization under uncertainty with pointwise state constraints.
\newblock {\em Comput. Optim. Appl.}, 85:441--478, 2023.

\bibitem{AKunoth_CSchwab_2016}
A.~Kunoth and C.~Schwab.
\newblock Sparse adaptive tensor {G}alerkin approximations of stochastic
  {PDE}-constrained control problems.
\newblock {\em SIAM/ASA J. Uncertain. Quantif.}, 4:1034--1059, 2016.

\bibitem{MLazar_EZuazua_2014}
M.~Lazar and E.~Zuazua.
\newblock Averaged control and observation of parameter-depending wave
  equations.
\newblock {\em C. R. Math. Acad. Sci. Paris}, 352:497--502, 2014.

\bibitem{HCLee_JLee_2013}
H.-C. Lee and J.~Lee.
\newblock A stochastic {G}alerkin method for stochastic control problems.
\newblock {\em Commun. Comput. Phys.}, 14:77--106, 2013.

\bibitem{MLoeve_1946}
M.~Lo{\`e}ve.
\newblock Fonctions al\'eatoires de second ordre.
\newblock {\em Revue Sci.}, 84:195--206, 1946.

\bibitem{GJLord_CEPowell_TShardlow_2014}
G.~J. Lord, C.~E. Powell, and T.~Shardlow.
\newblock {\em An Introduction to Computational Stochastic {PDEs}}.
\newblock Cambridge University Press, New York, 2014.

\bibitem{JMFrutos_MKessler_AMunch_FPeriago_2016}
J.~Martínez-Frutos, M.~Kessler, A.~Münch, and F.~Periago.
\newblock Robust optimal {R}obin boundary control for the transient heat
  equation with random input data.
\newblock {\em Int. J. Numer. Meth. Engng.}, 108:116--135, 2016.

\bibitem{MMateos_2018}
M.~Mateos.
\newblock Optimization methods for {D}irichlet control problems.
\newblock {\em Optimization}, 67:585--617, 2018.

\bibitem{SMay_RRannacher_BVexler_2013}
S.~May, R.~Rannacher, and B.~Vexler.
\newblock Error analysis for a finite element approximation of elliptic
  {D}irichlet boundary control problems.
\newblock {\em SIAM J. Control Optim.}, 51:2585--2611, 2013.

\bibitem{FNegri_AManzoni_GRozza_2015}
F.~Negri, A.~Manzoni, and G.~Rozza.
\newblock Reduced basis approximation of parametrized optimal flow control
  problems for the {S}tokes equations.
\newblock {\em Comput. Math. Appl.}, 69(4):319--336, 2015.

\bibitem{BOksendal_2003}
B.~{\O}ksendal.
\newblock {\em Stochastic Differential Equations}.
\newblock Springer-Verlag, Berlin, 2003.

\bibitem{JWPearson_JPestana_2020}
J.~W. Pearson and J.~Pestana.
\newblock Preconditioners for {Krylov} subspace methods: {An} overview.
\newblock {\em GAMM Mitt.}, 43(4):e202000015, 2020.

\bibitem{JPfefferer_MWinkler_2019}
J.~Pfefferer and M.~Winkler.
\newblock Finite element error estimates for normal derivatives on boundary
  concentrated meshes.
\newblock {\em SIAM J. Numer. Anal.}, 57(5):2043--2073, 2019.

\bibitem{MPorcelli_VSimoncini_MTani_2015}
M.~Porcelli, V.~Simoncini, and M.~Tani.
\newblock Preconditioning of active-set {N}ewton methods for {PDE}-constrained
  optimal control problems.
\newblock {\em SIAM J. Sci. Comput.}, 37(5):S472--S502, 2015.

\bibitem{RRannacher_RScott_1982}
R.~Rannacher and R.~Scott.
\newblock Some optimal error estimates for piecewise linear finite element
  approximations.
\newblock {\em Math. Comp.}, 38(158):437--445, 1982.

\bibitem{RTRockafellar_JORoyset_2015}
R.~T. Rockafellar and J.~O. Royset.
\newblock Engineering decisions under risk averseness.
\newblock {\em ASCE-ASME J. Risk Uncertain. Eng. Syst., Part A: Civ. Eng.},
  1(2):04015003, 2015.

\bibitem{ERosseel_GNWells_2012}
E.~Rosseel and G.~N. Wells.
\newblock Optimal control with stochastic {PDE} constraints and uncertain
  controls.
\newblock {\em Comput. Methods Appl. Mech. Engrg.}, 213/216:152--167, 2012.

\bibitem{JORoysets_2022}
J.~O. Roysets.
\newblock Risk-adaptive approaches to learning and decision making: {A} survey,
  2024.
\newblock arXiv:2212.00856.

\bibitem{StollWinkler2021}
M.~Stoll and M.~Winkler.
\newblock Optimal {D}irichlet control of partial differential equations on
  networks.
\newblock {\em Electron. Trans. Numer. Anal.}, 54:392--419, 2021.

\bibitem{HTiesler_RMKirby_DXiu_TPreusser_2012a}
H.~Tiesler, R.~M. Kirby, D.~Xiu, and T.~Preusser.
\newblock Stochastic collocation for optimal control problems with stochastic
  {PDE} constraints.
\newblock {\em SIAM J. Control Optim.}, 50(5):2659--2682, 2012.

\bibitem{FTroeltzsch_2010a}
F.~Tr{\"o}ltzsch.
\newblock {\em Optimal Control of Partial Differential Equations: {T}heory,
  Methods and Applications}, volume 112 of {\em Graduate Studies in
  Mathematics}.
\newblock American Mathematical Society, Providence, RI, 2010.

\bibitem{EUllmann_2010}
E.~Ullmann.
\newblock A {K}ronecker product preconditioner for stochastic {G}alerkin finite
  element discretizations.
\newblock {\em SIAM J. Sci. Comput.}, 32(2):923--946, 2010.

\bibitem{NWiener_1938a}
N.~Wiener.
\newblock The homogeneous chaos.
\newblock {\em Amer. J. Math.}, 60:897--938, 1938.

\bibitem{MWinkler_2020}
M.~Winkler.
\newblock Error estimates for variational normal derivatives and {D}irichlet
  control problems with energy regularization.
\newblock {\em Numer. Math.}, 144:413--445, 2020.

\bibitem{DXiu_GEKarniadakis_2002}
D.~Xiu and G.~Em Karniadakis.
\newblock The {W}iener--{A}skey polynomial chaos for stochastic differential
  equations.
\newblock {\em SIAM J. Sci. Comput.}, 24(2):619–644, 2002.

\bibitem{WZhao_MGunzburger_2023}
W.~Zhao and M.~Gunzburger.
\newblock Stochastic collocation method for stochastic boundary control of the
  {N}avier-{S}tokes equations.
\newblock {\em Appl. Math. Optim.}, 287:6, 2023.

\bibitem{EZuazua_2014}
E.~Zuazua.
\newblock Averaged control.
\newblock {\em Automatica}, 50(12):3077--3087, 2014.

\end{thebibliography}
\end{document}